\documentclass{amsart}

\usepackage{hyperref}

\usepackage{amssymb}
\usepackage{amsmath}
\usepackage{amsthm}
\usepackage{enumerate}
\usepackage{graphicx}

\usepackage{color}

\theoremstyle{plain}

\makeatletter
\newtheorem*{rep@theorem}{\rep@title}
\newcommand{\newreptheorem}[2]{%
\newenvironment{rep#1}[1]{%
 \def\rep@title{#2 \ref{##1}}%
 \begin{rep@theorem}}%
 {\end{rep@theorem}}}
\makeatother

\newtheorem{theorem}{Theorem}[section]
\newreptheorem{theorem}{Theorem}
\newtheorem{proposition}[theorem]{Proposition}
\newtheorem{lemma}[theorem]{Lemma}
\newtheorem{corollary}[theorem]{Corollary}

\theoremstyle{definition}
\newtheorem{definition}[theorem]{Definition}
\newtheorem{example}{Example}[section]

\theoremstyle{remark}
\newtheorem*{remark}{Remark}

\newcommand{\RiemannSphere}{\widehat{\mathbb{C}}}

\renewcommand {\tilde} {\widetilde}

\begin{document}

\title{Fekete polynomials and shapes of Julia sets}

\date{May 19 2017}

\author[K. Lindsey]{Kathryn A. Lindsey}
\thanks{First author supported by an NSF Mathematical Sciences Research Postdoctoral Fellowship}
\address{Department of Mathematics, University of Chicago, Chicago,  IL 60637, United States.}
\email{klindsey@math.uchicago.edu}
\author[M. Younsi]{Malik Younsi}
\thanks{Second author supported by NSERC and NSF Grant DMS-1664807}
\address{Department of Mathematics, Stony Brook University, Stony Brook, NY 11794-3651, United States.}
\email{malik.younsi@gmail.com}

\keywords{Fekete points, polynomials, Julia sets, Hausdorff distance, Leja points}
\subjclass[2010]{primary 30E10, 37F10; secondary 30C85}

\begin{abstract}
We prove that a nonempty, proper subset $S$ of the complex plane can be approximated in a strong sense by polynomial filled Julia sets if and only if $S$ is bounded and $\hat{\mathbb{C}} \setminus \textrm{int}(S)$ is connected.  The proof that such a set is approximable by filled Julia sets is constructive and relies on Fekete polynomials. Illustrative examples are presented. We also prove an estimate for the rate of approximation in terms of geometric and potential theoretic quantities.
\end{abstract}

\maketitle

\section{Introduction}

The study of possible shapes of polynomial Julia sets was instigated by the first author in \cite{LIN2}, who proved that any Jordan curve in the complex plane $\mathbb{C}$ can be approximated by polynomial Julia sets arbitrarily well in the Hausdorff distance, thereby answering a question of W.P. Thurston.

\begin{theorem}[Lindsey \cite{LIN2}]
\label{thmKL}
Let $E \subset \mathbb{C}$ be any closed Jordan domain. Then for any $\epsilon>0$, there exists a polynomial $P$ such that
$$d(E,\mathcal{K}(P))<\epsilon, \quad d(\partial E, \mathcal{J}(P))<\epsilon.$$
\end{theorem}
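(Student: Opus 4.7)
The plan is to approximate $E$ by filled Julia sets of Fej\'er-type polynomials built from the Riemann map of the exterior. By Carath\'eodory's theorem (since $\partial E$ is a Jordan curve), one has a homeomorphism $\phi: \RiemannSphere \setminus \UnitDisk \to \RiemannSphere \setminus \operatorname{int}(E)$, conformal on $\RiemannSphere \setminus \overline{\UnitDisk}$, with $\phi(\infty) = \infty$ and $\phi'(\infty) = \operatorname{cap}(E) > 0$. For each large $d$, I would place Fej\'er points $\zeta_k^{(d)} := \phi(e^{2\pi i k/d})$ on $\partial E$ and define
\[
P_d(z) := \frac{1}{\operatorname{cap}(E)^{d-1}} \prod_{k=0}^{d-1}\bigl(z - \zeta_k^{(d)}\bigr).
\]
The leading coefficient and zero set are chosen so that $P_d$ mimics the exterior semiconjugate $F_d(z) := \phi\bigl(\phi^{-1}(z)^d\bigr)$ both at $\infty$ and on $\partial E$.

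To prove $\mathcal{K}(P_d) \subset \{z : \operatorname{dist}(z, E) < \eps\}$, I would invoke classical potential theory: a Walsh/Kalm\'ar-type argument on Fej\'er interpolation shows that $\tfrac{1}{d}\log|P_d(z)|$ converges to the Green's function $G_E$ of $\RiemannSphere \setminus E$ with pole at $\infty$, uniformly on compact subsets of $\ComplexPlane \setminus E$. Hence for any $\eps>0$ and all sufficiently large $d$, $|P_d(z)| \geq 2|z|+1$ whenever $\operatorname{dist}(z,E) \geq \eps$, so the orbit of such a $z$ escapes to $\infty$ and $z \notin \mathcal{K}(P_d)$.

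The reverse inclusion $E \subset \{z : \operatorname{dist}(z, \mathcal{K}(P_d)) < \eps\}$ is the heart of the matter. The approach is to show that $\mathcal{K}(P_d)$ is \emph{connected}, which for a polynomial filled Julia set is equivalent to all critical points being non-escaping. Since $F_d$ has no critical points on $\ComplexPlane \setminus E$ while $P_d$ has $d-1$ critical points, Hurwitz's theorem forces the critical set of $P_d$ to accumulate on $\partial E$ as $d \to \infty$. If one can additionally show that their orbits stay bounded (hence lie in $\mathcal{K}(P_d)$), then $\mathcal{K}(P_d)$ becomes a connected, full compactum whose boundary $\mathcal{J}(P_d)$ is squeezed into an $\eps$-neighborhood of $\partial E$, yielding both $d(E,\mathcal{K}(P_d))<\eps$ and $d(\partial E,\mathcal{J}(P_d))<\eps$.

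The main obstacle is precisely this control on critical orbits: without it, $\mathcal{K}(P_d)$ could fragment into thin components that fail to cover the interior of $E$. A natural strategy is to sharpen the uniform comparison $P_d \approx F_d$ on a slightly thickened exterior of $E$ and translate it, via distortion/Bieberbach estimates together with B\"ottcher-coordinate analysis at $\infty$, into a uniform bound on the escape rate of orbits starting near $\partial E$. The Julia set statement $d(\partial E,\mathcal{J}(P_d))<\eps$ then follows from $\mathcal{J}(P_d)=\partial \mathcal{K}(P_d)$ together with the fullness and connectedness of $\mathcal{K}(P_d)$.
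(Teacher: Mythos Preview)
Your outline has a genuine gap, and you identify it yourself: the step ``if one can additionally show that their orbits stay bounded'' is exactly the missing idea, and the route you sketch for filling it (Hurwitz on critical points, B\"ottcher distortion, escape-rate estimates near $\partial E$) is unlikely to close on its own. First, Hurwitz does not apply cleanly here because the $P_d$ have increasing degree and do not converge to a holomorphic limit on any domain containing their critical points; the comparison map $F_d=\phi\circ(\phi^{-1})^d$ is only defined on $\RiemannSphere\setminus\operatorname{int}(E)$, so there is no analytic limit on the interior to which the critical set could be pinned. Second, even if you knew the critical points lie in an $\eps$-collar of $\partial E$, nothing in your normalization prevents $P_d$ from pushing that collar slightly outside $E$ and hence into the escaping region you already established; with your scaling one only has $\|P_d\|_E^{1/d}\to\operatorname{cap}(E)$ from above, so $\|P_d\|_E$ need not be smaller than the inradius of $E$, and $P_d(E)\subset E$ can fail.

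The paper (following Lindsey's original argument) bypasses the critical-point analysis entirely by \emph{changing the polynomial} so that $E$ becomes forward-invariant. After translating so that $0\in\operatorname{int}(E)$, one sets
\[
P_{n,s}(z)\;=\;z\cdot\frac{e^{-ns/2}}{\operatorname{cap}(E)^n}\,q_n(z),
\]
where $q_n$ is your Fej\'er/Fekete product. The extra factor $z$ makes $0$ a fixed point, and the damping factor $e^{-ns/2}$ (with $s>0$ small but fixed) forces $\|P_{n,s}\|_E<r(E)$ for large $n$, hence $P_{n,s}(E)\subset\mathbb{D}(0,r(E))\subset\operatorname{int}(E)$. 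This single estimate gives $E\subset\operatorname{int}(\mathcal K(P_{n,s}))$ immediately---no connectedness or critical-orbit control is needed. The outer inclusion is then proved exactly as you suggest, via $\tfrac1n\log|q_n|\to p_\nu$ and the Green's function, the point being that the damping $e^{-ns/2}$ is weak enough not to spoil $|P_{n,s}(z)|>e^{ns/4}|z|$ on $\{g_\Omega>s\}$. Once $E\subset\operatorname{int}(\mathcal K(P_{n,s}))$ is in hand, all zeros (hence, in the connected case, all critical points) of $P_{n,s}$ lie in a single bounded Fatou component, and the Jordan/quasicircle conclusion you were aiming for falls out for free.
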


\noindent Here $\mathcal{K}(P):=\{z \in \mathbb{C} : P^m(z) \nrightarrow \infty \,\, \mbox{as}\,\, m \to \infty\}$ is the filled Julia set of $P$, $\mathcal{J}(P):=\partial \mathcal{K}(P)$ is the Julia set of $P$ and $d$ is the Hausdorff distance.

Note that Theorem \ref{thmKL} remains valid if $E$ is any nonempty connected compact set with connected complement, by a simple approximation process.

Approximating compact sets by fractals has proven over the years to be a fruitful technique in the study of important problems in complex analysis, such as the universal dimension spectrum for harmonic measure (cf. the work of Carelson--Jones \cite{CAR2}, Binder--Makarov--Smirnov \cite{BIN}, and the references therein). Other related works include \cite{BIS}, where it was shown that any connected compact set in the plane can be approximated by dendrite Julia sets, and \cite{KIM}, containing applications to computer graphics.

An important feature of the approach in \cite{LIN2} which seems to be absent from other works is that it is constructive and can easily be implemented to obtain explicit images of Julia sets representing various shapes.

In this paper, we prove the following generalization of Theorem \ref{thmKL}.

\begin{theorem}
\label{mainthm1}
Let $E \subset \mathbb{C}$ be any nonempty compact set with connected complement. Then for any $\epsilon>0$, there exists a polynomial $P$ such that
$$d(E,\mathcal{K}(P))<\epsilon, \quad d(\partial E, \mathcal{J}(P))<\epsilon.$$
\end{theorem}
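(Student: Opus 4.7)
The plan is to realize $P$ as a suitably normalized Fekete polynomial of $E$. First I would reduce to the case $c := \mathrm{cap}(E) > 0$; the polar case $c = 0$ (e.g.\ when $E$ is a finite set) can be treated by a direct construction using polynomials with superattracting fixed points of large multiplier, whose filled Julia sets shrink arbitrarily close to prescribed points of $E$, combined with a product/composition to cover each point.

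For each large $n$, let $\{z_1^{(n)}, \ldots, z_n^{(n)}\}$ be an $n$-th Fekete set of $E$ and set $F_n(z) = \prod_{k=1}^{n}(z - z_k^{(n)})$, the corresponding monic Fekete polynomial. I would then define
$P_n(z) := c^{-(n-1)} F_n(z)$,
so that the leading coefficient $c^{-(n-1)}$ forces $\mathrm{cap}(\mathcal{K}(P_n)) = c = \mathrm{cap}(E)$. The claim to establish is that $\mathcal{K}(P_n) \to E$ and $\mathcal{J}(P_n) \to \partial E$ in Hausdorff distance as $n \to \infty$, so that any sufficiently large $n$ produces the desired $P$.

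The core step is to prove that the dynamical Green's function $G_n := g_{\mathcal{K}(P_n)}$ converges locally uniformly on $\hat{\mathbb{C}} \setminus E$ to $g_E$, the Green's function of $\hat{\mathbb{C}} \setminus E$ with pole at $\infty$ (which exists because $E$ has connected complement and $c > 0$). The classical Fekete--Szeg\H{o} asymptotic gives $\tfrac{1}{n}\log|F_n(z)| \to g_E(z) + \log c$ locally uniformly off $E$, hence $\tfrac{1}{n}\log|P_n(z)| \to g_E(z)$ there. Using the expansion $\log|P_n(w)| = n\log|w| - (n-1)\log c + O(|w|^{-1})$ valid for $|w|$ large, a telescoping estimate for $G_n(z) = \lim_m n^{-m}\log|P_n^m(z)|$ shows $G_n - \tfrac{1}{n}\log|P_n| \to 0$ on the basin of infinity, whence $G_n \to g_E$. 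A standard potential-theoretic principle then transfers this into Hausdorff convergence $\mathcal{K}(P_n) \to E$.

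The main obstacle is that Fekete--Szeg\H{o} convergence is guaranteed only on $\hat{\mathbb{C}} \setminus E$, while $E$ may contain irregular boundary points where $g_E$ fails to extend continuously by $0$. I would address this by an exhaustion: approximate $E$ from outside by slightly fattened compacts $E_\delta$ (e.g.\ closed $\delta$-neighborhoods intersected with an appropriate regularization) that remain polynomially convex and are regular for the Dirichlet problem, apply the Fekete construction to $E_\delta$, and then let $\delta \to 0$. Passing from $\mathcal{K}(P_n) \to E$ to $\mathcal{J}(P_n) \to \partial E$ uses that both $\mathcal{K}(P_n)$ and $E$ have connected complement, so $\mathcal{J}(P_n) = \partial \mathcal{K}(P_n)$ and $\partial E$ are \emph{outer} boundaries with no spurious interior components; uniform convergence of the Green's functions down to these outer boundaries then yields the matching Hausdorff convergence of Julia sets.
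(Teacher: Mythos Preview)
Your normalization $P_n(z)=c^{-(n-1)}F_n(z)$ does not work, and the claimed ``standard potential-theoretic principle'' transferring Green's-function convergence into Hausdorff convergence $\mathcal{K}(P_n)\to E$ fails in the direction you need. Convergence $G_n\to g_E$ locally uniformly on $\hat{\mathbb{C}}\setminus E$ does force $\mathcal{K}(P_n)$ eventually into any neighborhood of $E$, but it says nothing about whether $\mathcal{K}(P_n)$ fills up $E$. Concretely, take $E=\overline{\mathbb{D}}$: then $c=1$, the $n$-th roots of unity form a Fekete $n$-tuple, $F_n(z)=z^n-1$, and your polynomial is $P_n(z)=z^n-1$. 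For odd $n$ the unique finite critical point $0$ has orbit $0\mapsto -1\mapsto -2\mapsto -2^n-1\to\infty$, so $\mathcal{K}(P_n)=\mathcal{J}(P_n)$ is a totally disconnected Cantor set; it is not Hausdorff-close to $\overline{\mathbb{D}}$, nor is $\mathcal{J}(P_n)$ close to the circle $\partial\mathbb{D}$. In this example $G_n$ is strictly positive on most of $\operatorname{int}(E)$, so the telescoping estimate you sketch cannot rescue the argument.

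What is missing is a mechanism that \emph{traps} $E$ inside the filled Julia set. The paper obtains this by first arranging (after translation and adjoining a small disk if necessary) that $0\in\operatorname{int}(E)$, and then inserting an extra factor of $z$ together with a damping constant:
\[
P_{n,s}(z)\;=\;z\,\frac{e^{-ns/2}}{\operatorname{cap}(E)^n}\,F_n(z),\qquad s>0\ \text{small}.
\]
The factor $z$ makes $0$ a fixed point, and the damping $e^{-ns/2}$ beats the error $\|F_n\|_E/\operatorname{cap}(E)^n\le(\delta_n(E)/c)^n$, so that $P_{n,s}(E)\subset\mathbb{D}(0,r)\subset E$ for a suitable $r>0$; this gives $E\subset\operatorname{int}(\mathcal{K}(P_{n,s}))$ directly, with no appeal to Green's-function limits. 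Escape on $\hat{\mathbb{C}}\setminus U$ is then proved by the same Fekete--Szeg\H{o}/potential estimate you describe. Once one has $E\subset\operatorname{int}(\mathcal{K}(P_{n,s}))\subset U$, both Hausdorff inequalities (for the sets and for their boundaries) are immediate. Two minor points: the case $\operatorname{cap}(E)=0$ is disposed of simply by adjoining a small disk, not by a separate product/composition construction; and ``superattracting fixed points of large multiplier'' is a contradiction in terms, since superattracting means multiplier $0$.
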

Note that here $E$ is not assumed to be connected.

This supersedes \cite[Theorem 4.3]{LIN2}, which deals with rational maps instead of polynomials.

The proof of Theorem \ref{mainthm1} generalizes the method introduced in \cite{LIN2} by putting it in the more natural framework of potential theory. More precisely, our approach is based on the observation that in Theorem \ref{thmKL}, the roots of the polynomial $P$ are equidistributed with respect to harmonic measure on $\partial E$. In the connected case, such points can be obtained as images of equally spaced points on the unit circle under the Riemann map. This, however, does not extend to disconnected sets and in this case, the explicit construction of equidistributed sequences of points is more difficult. Nevertheless, this is a classical problem in potential theory which has been extensively studied in the past. For instance, Fekete described in \cite{FEK} the following construction :

Let $E$ be a nonempty compact set with connected complement, and let $n \geq 2$. A \textit{Fekete} $n$-\textit{tuple} for $E$ is any $n$-tuple $w_1^n, \dots, w_n^n \in E$ which maximizes the product
$$\prod_{j<k}|w_j^n-w_k^n|.$$
Note that by the maximum principle, any Fekete $n$-tuple lies in $\partial E$.

Fekete points are classical objects of potential theory which have proven over the years to be of fundamental importance to a variety of problems related to polynomial interpolation. For instance, they can be used to give a proof of Hilbert's Lemniscate Theorem, which states that any compact plane set with connected complement can be approximated by polynomial lemniscates. In fact, as we will see, Hilbert's theorem is closely related to Theorem \ref{mainthm1}. For more information on Fekete points and their applications, we refer the reader to \cite{BOS} and \cite[Chapter III, Section 1]{SAF}.

It is not difficult to show that Fekete points are indeed equidistributed with respect to harmonic measure on $\partial E$, in the sense that the counting measures
$$\mu_n:= \frac{1}{n} \sum_{j=1}^n \delta_{w_j^n}$$
converge $\operatorname{weak}^*$ to the harmonic measure (see Proposition \ref{weakconvergence}). Combining this with the method put forward in \cite{LIN2} gives a constructive proof of Theorem \ref{mainthm1}.

Other equidistributed sequences of points include Leja points, which share many nice properties with Fekete points but are easier to compute numerically. Theorem \ref{thmExplicitLeja} proves that our approximation technique works if Leja points, instead of Fekete points, are used. The technique in \cite{LIN2} for constructing polynomials whose filled Julia sets approximate a given Jordan domain required first obtaining an approximation of the Riemann map on the complement of that domain.  Using Leja points or Fekete points obviates this step, and thus constitutes an advance in the constructive process.

Note that Theorem \ref{mainthm1} not only yields approximation of the set $E$ by the filled Julia set of a polynomial, but also approximation of its boundary by the corresponding Julia set. Consequently, we introduce the following definition.

\begin{definition}
\label{DefTotApprox}
A nonempty proper subset $S$ of the plane is \textit{totally approximable} by a collection $\mathcal{C}$ of nonempty proper subsets of the plane if for any $\epsilon>0$, there exists $C \in \mathcal{C}$ such that
$$d(S,C)<\epsilon, \quad d(\partial S, \partial C)<\epsilon.$$
\end{definition}

Note that closeness of two sets in the Hausdorff distance neither implies nor is implied by closeness of their boundaries, see Figure \ref{f:HausdorffDistance}. On the other hand, it is easy to see that closeness of both the sets and their boundaries implies closeness of the complements, c.f. Lemma \ref{lem2}.

 \begin{figure}[!h] \label{f:HausdorffDistance}
  \centering
  \includegraphics[width=.6\linewidth]{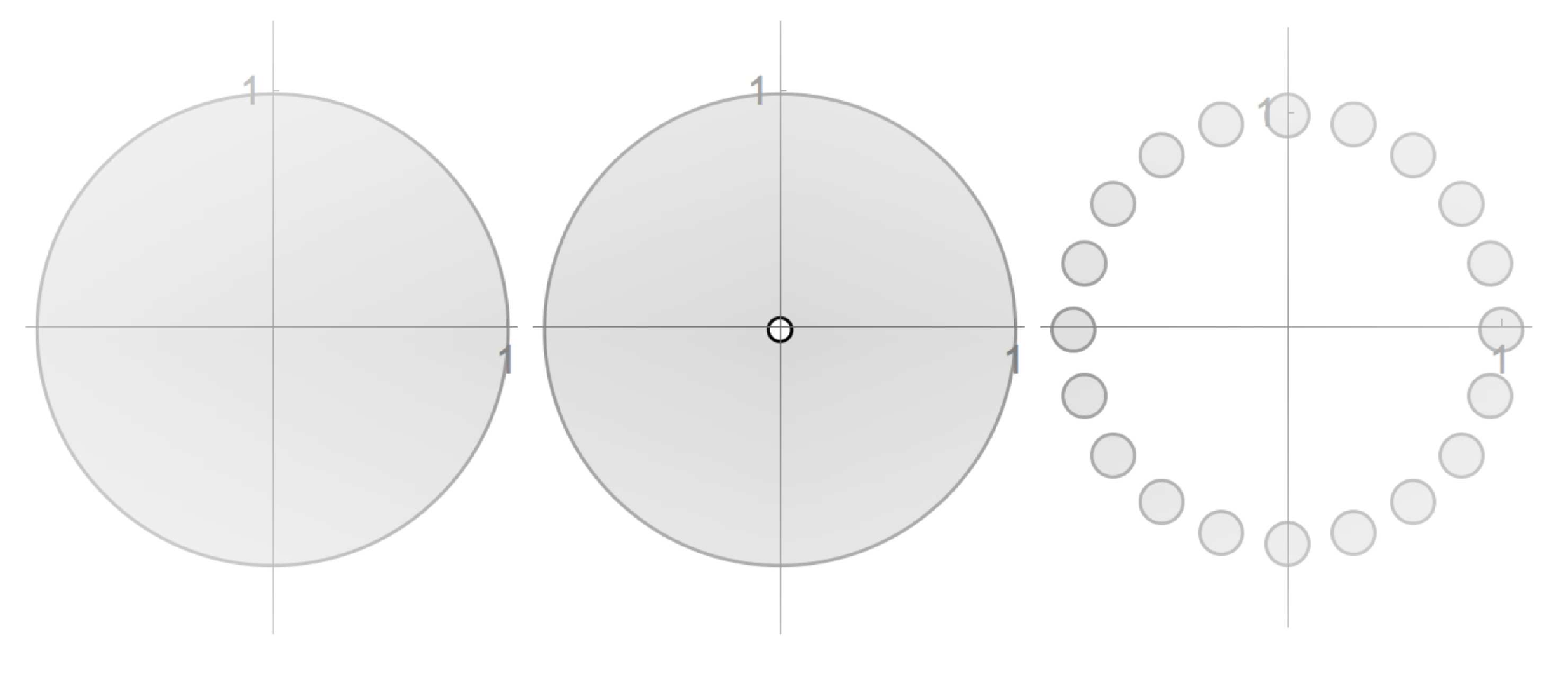}
  \caption[]{A unit disk, a unit disk with a small disk removed, and a ring of small disks.  The first two are close in the Hausdorff distance, but their boundaries are not.  The boundaries of the first and third are close in the Hausdorff distance, but the sets themselves are not. }
  \end{figure}

With this definition, Theorem \ref{mainthm1} states that any nonempty compact plane set $E$ with connected complement is totally approximable by polynomial filled Julia sets. More generally, using an approximation process, one can prove that this remains true if $E$ is replaced by any nonempty bounded set whose interior has connected complement. This condition also turns out to be necessary.

\begin{theorem}
\label{mainthm3}
A non-empty proper subset $S$ of the complex plane $\mathbb{C}$ is totally approximable by polynomial filled Julia sets if and only if $S$ is bounded and $\mathbb{C} \setminus \operatorname{int}(S)$ is connected.
\end{theorem}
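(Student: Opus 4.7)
The plan is to prove the two implications of the biconditional separately, reducing the harder sufficiency direction to Theorem \ref{mainthm1}. Throughout, I use that the basin of infinity $\hat{\mathbb{C}}\setminus\mathcal{K}(P)$ of a polynomial $P$ is a connected open subset of $\hat{\mathbb{C}}$ containing $\infty$.

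For necessity, suppose $S$ is totally approximable. Boundedness is immediate: taking $\epsilon=1$ in the definition produces a polynomial $P$ with $S$ contained in the closed $1$-neighborhood of $\mathcal{K}(P)$, which is bounded. For the connectedness of $\mathbb{C}\setminus\operatorname{int}(S)$, I would argue by contradiction: assume $V$ is a bounded component. Since components of compact subsets of the plane coincide with quasi-components, I can find a Jordan curve $\gamma\subset\operatorname{int}(S)$ separating $V$ from $\infty$ with $\delta_0:=d(\gamma,\mathbb{C}\setminus\operatorname{int}(S))>0$. Fix $\epsilon<\delta_0/3$ and a polynomial $P$ achieving a total $\epsilon$-approximation. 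Then $\mathcal{J}(P)$ lies in the $\epsilon$-neighborhood of $\partial S\subset\mathbb{C}\setminus\operatorname{int}(S)$, hence at distance at least $\delta_0-\epsilon$ from $\gamma$, so $\gamma\cap\mathcal{J}(P)=\emptyset$. By Lemma \ref{lem2}, the alternative $\gamma\subset\mathbb{C}\setminus\mathcal{K}(P)$ would place $\gamma$ within $O(\epsilon)$ of $\mathbb{C}\setminus S\subset\mathbb{C}\setminus\operatorname{int}(S)$, contradicting $d(\gamma,\mathbb{C}\setminus\operatorname{int}(S))=\delta_0$; hence $\gamma\subset\operatorname{int}(\mathcal{K}(P))$. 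By connectedness of $\hat{\mathbb{C}}\setminus\mathcal{K}(P)$, the bounded region $U_\gamma$ enclosed by $\gamma$ lies entirely in $\mathcal{K}(P)$, and in fact $\mathcal{J}(P)\cap\overline{U_\gamma}=\emptyset$ (a point of $\mathcal{J}(P)$ in $U_\gamma$ would be a limit of points of $\mathbb{C}\setminus\mathcal{K}(P)$ in the open set $U_\gamma$, contradicting $U_\gamma\subset\mathcal{K}(P)$). Consequently every point of $V\subset U_\gamma$ lies at distance at least $d(\gamma,\mathcal{J}(P))\ge\delta_0-\epsilon$ from $\mathcal{J}(P)$. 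Since $V$ is a proper compact subset of $\mathbb{C}$, $\partial V$ is nonempty; moreover $\partial V\subset\partial\operatorname{int}(S)\subset\partial S$, so any $p\in\partial V$ satisfies $d(p,\mathcal{J}(P))<\epsilon$, contradicting the lower bound $\delta_0-\epsilon>\epsilon$.

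For sufficiency, suppose $S$ is bounded with $\mathbb{C}\setminus\operatorname{int}(S)$ connected and fix $\epsilon>0$. The plan is to construct a nonempty compact set $E$ with $\hat{\mathbb{C}}\setminus E$ connected such that $d(S,E)<\epsilon/2$ and $d(\partial S,\partial E)<\epsilon/2$; applying Theorem \ref{mainthm1} to $E$ with tolerance $\epsilon/2$ then produces a polynomial $P$ with $d(E,\mathcal{K}(P)),d(\partial E,\mathcal{J}(P))<\epsilon/2$, and the triangle inequality finishes. I would build $E$ by modifying $\overline{S}$: start with $\overline{S}$ and open thin channels through it that eliminate every bounded component of the complement. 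By compactness of $\overline{S}$, only finitely many bounded components of $\mathbb{C}\setminus\overline{S}$ and only finitely many components of $\partial S\setminus\partial\overline{S}$ have diameter at least $\epsilon/8$ or are $(\epsilon/8)$-isolated from the rest of $\partial S$. For each such macroscopic feature $F$, the identity $\mathbb{C}\setminus\operatorname{int}(S)=\partial S\cup(\mathbb{C}\setminus\overline{S})$ together with its connectedness supplies a continuous path from $F$ to the unbounded component of $\mathbb{C}\setminus\overline{S}$ lying entirely in $\partial S\cup(\mathbb{C}\setminus\overline{S})$. Thickening each such path to a tube of width $\ll\epsilon$ and removing from $\overline{S}$ the portion of the tube lying in $\overline{S}$ opens the desired channel; since the path's intersection with $\overline{S}$ lies in $\partial S$, the new boundary arcs of $E$ remain close to $\partial S$. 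The remaining small, non-isolated holes of $\overline{S}$ are absorbed into $E$; their small diameter bounds the additional Hausdorff distortion.

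The principal obstacle is the construction of $E$. The requirement that $\partial E$ stay close to $\partial S$ forbids the naive approach of simply filling every hole of $\overline{S}$; instead every macroscopic feature of the complement and of the interior boundary $\partial S\setminus\partial\overline{S}$ must be connected to infinity by a thin channel through $\partial S$. This is possible only because of the standing hypothesis that $\mathbb{C}\setminus\operatorname{int}(S)$ is connected---precisely the property that provides the paths through $\partial S\cup(\mathbb{C}\setminus\overline{S})$ along which the channels are routed. The compactness reduction to finitely many macroscopic features is what makes the procedure terminate and keeps the cumulative Hausdorff distortion controllable. The forward direction, by contrast, is essentially mechanical once the separating Jordan curve and Lemma \ref{lem2} are in hand.
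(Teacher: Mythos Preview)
Your necessity argument is correct and close in spirit to the paper's: both hinge on Lemma~\ref{lem2} and the connectedness of the basin of infinity, though you organize the contradiction around a separating Jordan curve while the paper uses an open set $V$ with $\partial V\subset\operatorname{int}(S)$.

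The sufficiency direction, however, has a genuine gap in the construction of $E$. Consider a Swiss cheese: let $S=\overline{\mathbb{D}}\setminus\bigcup_n\mathbb{D}(a_n,r_n)$ where the removed open disks are pairwise disjoint, have radii $r_n<\epsilon/16$, and are chosen dense enough that $\operatorname{int}(S)=\emptyset$. Then $\mathbb{C}\setminus\operatorname{int}(S)=\mathbb{C}$ is connected, so $S$ satisfies the hypotheses. Here $S$ is closed, so $\overline{S}=S$ and $\partial S=\partial\overline{S}=S$; in particular $\partial S\setminus\partial\overline{S}=\emptyset$, and every bounded complementary component of $\overline{S}$ is one of the tiny disks $\mathbb{D}(a_n,r_n)$. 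Your scheme declares all of them ``small and non-isolated'' and absorbs them, yielding $E=\overline{\mathbb{D}}$ with $\partial E=\partial\mathbb{D}$. But $\partial S=S$ is dense in $\overline{\mathbb{D}}$, so $d(\partial S,\partial E)\approx 1$, not $<\epsilon/2$. The sentence ``their small diameter bounds the additional Hausdorff distortion'' controls $d(S,E)$ but not $d(\partial S,\partial E)$: filling a hole erases boundary, and nothing in your plan puts replacement boundary nearby. A second, independent problem is that the finiteness claim ``only finitely many bounded components of $\mathbb{C}\setminus\overline{S}$ have diameter at least $\epsilon/8$'' is false in general: a square with infinitely many disjoint thin open ``needle'' slits $(0,3/4)\times(a_n,b_n)$, each touching the left edge so that $\mathbb{C}\setminus\operatorname{int}(S)$ stays connected, gives infinitely many complementary components all of diameter $\geq 3/4$.

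The paper avoids both issues by abandoning $\overline{S}$ as a starting point. In Proposition~\ref{TotApproxProp} it sets $K^\epsilon=E^\epsilon\cup F^\epsilon$, where $E^\epsilon$ is the $\epsilon$-shrinking of $\operatorname{int}(S)$ and $F^\epsilon$ is a disjoint union of small closed squares, one centered in each cell of an $\epsilon/2$-grid that meets $\partial S$. The squares guarantee that $\partial K^\epsilon$ is $\epsilon$-dense in $\partial S$ regardless of how $\partial S$ sits inside $\overline{S}$, and Janiszewski's theorem gives connectedness of the complement. You should replace your channel-and-fill construction with this (or an equivalent) device that manufactures boundary near every point of $\partial S$ rather than trying to preserve the boundary of $\overline{S}$.
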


Our method also gives a precise estimate for the rate of approximation in Theorem \ref{mainthm1}. More precisely, let $E \subset \mathbb{C}$ be a compact set with connected complement $\Omega$ in the Riemann sphere $\RiemannSphere$. We also assume that the interior of $E$ is not empty. In particular, the Green's function $g_{\Omega}(\cdot,\infty)$ for $\Omega$ with pole at $\infty$ exists. For $n \in \mathbb{N}$, define $s_n(E)$ to be the infimum of $s>0$ for which there exists a polynomial $p_n$ of degree $n$ such that
$$E \subset \mathcal{K}(p_n) \subset E_s,$$
where $E_s:= E \cup \{z \in \Omega : g_\Omega(z,\infty) \leq s\}$.

\begin{theorem}
\label{mainthm2}
Let $E \subset \mathbb{C}$ be a uniformly perfect compact set with nonempty interior and connected complement. Then there exists a real number $c=c(E)$ depending only on $E$ such that
$$s_n(E) \leq c \, \frac{\log{n}}{\sqrt{n}} \qquad (n \geq 1).$$
\end{theorem}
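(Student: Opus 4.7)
The strategy is to construct, for each $n$, a degree-$n$ polynomial $p_n$ whose filled Julia set approximates $E$ to within Green distance $O(\log n/\sqrt n)$. Motivated by Proposition~\ref{weakconvergence}, the natural choice is a normalized Fekete polynomial. After an affine change of coordinates that places $\overline{\UnitDisk}\subset E$ (possible since $E$ has nonempty interior), set $p_n(z):=F_n(z)/\|F_n\|_E$, where $F_n(z)=\prod_{j=1}^n(z-w_j^n)$ is the monic polynomial whose roots form an $n$-th Fekete tuple for $E$. By construction $\|p_n\|_E\le 1$, and since $\overline{\UnitDisk}\subset E$ we also have $\|p_n\|_{\overline{\UnitDisk}}\le 1$. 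Thus $p_n$ maps both $E$ and $\overline{\UnitDisk}$ into $\overline{\UnitDisk}$, so iterates of points of $E$ remain in $\overline{\UnitDisk}$ from the first step onward, which gives the inclusion $E\subset\mathcal{K}(p_n)$.

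For the converse inclusion $\mathcal{K}(p_n)\subset E_s$, I compare the dynamical Green function $G_n:=g_{\mathcal{K}(p_n)}(\cdot,\infty)$ with the classical Green function $g_\Omega(\cdot,\infty)$. Because $\tfrac1n\log|F_n(\cdot)|$ is the logarithmic potential of the Fekete counting measure $\mu_n$ from Proposition~\ref{weakconvergence}, and because the normalization $n^{-1}\log\|F_n\|_E\to\log\mathrm{cap}(E)$, the scaled potential $\tfrac1n\log|p_n(z)|$ converges pointwise on $\Omega$ to the equilibrium potential $g_\Omega(z,\infty)$. The main technical step, and the central obstacle, is quantifying this convergence:
\[
\left|\tfrac1n\log|p_n(z)|-g_\Omega(z,\infty)\right|\le C\,\frac{\log n}{\sqrt n}\quad\text{whenever}\quad g_\Omega(z,\infty)\ge C\,\frac{\log n}{\sqrt n}.
\]

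This is where the uniform perfectness hypothesis is indispensable: it yields, via Pommerenke-type estimates, a H\"older modulus of continuity for $g_\Omega(\cdot,\infty)$ near $\partial E$, and combined with Erd\H{o}s--Tur\'{a}n-type discrepancy bounds on the Fekete counting measure it upgrades the weak$^*$ equidistribution of Proposition~\ref{weakconvergence} to the displayed quantitative rate. The $\sqrt n$ in the denominator reflects the scale of the Fekete discrepancy on a uniformly perfect set, while the logarithmic factor absorbs the singularity of the kernel $\log|z-\cdot|$ near $\operatorname{supp}\mu_E$. Granted the displayed estimate, the proof concludes quickly: if $g_\Omega(z,\infty)>s:=c\,\log n/\sqrt n$ with $c$ large, then $|p_n(z)|>1$; iterating via the functional equation $G_n\circ p_n=n\,G_n$ together with the bound $G_n(w)=\log|w|+o(1)$ at infinity yields $G_n(z)>0$, so $z\notin\mathcal{K}(p_n)$. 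Hence $\mathcal{K}(p_n)\subset E_s$, which combined with $E\subset\mathcal{K}(p_n)$ gives $s_n(E)\le c\,\log n/\sqrt n$.
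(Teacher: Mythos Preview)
Your approach is essentially the paper's: both use a Fekete polynomial normalized to have sup norm at most $1$ on $E$, and both rely on Pritsker's quantitative equidistribution (your ``Erd\H{o}s--Tur\'an-type discrepancy bounds'' are precisely Theorem~\ref{thmPRI}) to bound $\tfrac1n\log|F_n|-p_\nu$ by $C\log n/\sqrt n$ outside a Green level set. The paper, however, takes $P_{n}(z)=z\cdot e^{-ns/2}\operatorname{cap}(E)^{-n}F_n(z)$ rather than your $p_n=F_n/\|F_n\|_E$, and this extra factor of $z$ is not cosmetic---it is what makes the escape argument clean. Note also that your displayed estimate requires, in addition to Theorem~\ref{thmPRI}, a quantitative rate for $\tfrac1n\log\|F_n\|_E-\log\operatorname{cap}(E)$; this is supplied by Theorem~\ref{thmPRI2} (via Theorem~\ref{thmFek}), which you do not mention.

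The genuine gap is the final escape step. From the displayed estimate you extract only $|p_n(z)|>1$ on $\{g_\Omega>s\}$ and then invoke $G_n\circ p_n=nG_n$ together with $G_n(w)=\log|w|+o(1)$ to conclude $G_n(z)>0$. This does not work: the condition $|p_n(z)|>1$ does not place $p_n(z)$ back in $\{g_\Omega>s\}$ (the set $\{|w|>1\}$ may well meet $E$), so you cannot iterate; the functional equation only propagates information about $G_n$ that you do not yet have; and $G_n(w)=\log|w|+o(1)$ fails for your non-monic $p_n$. The paper avoids all of this because the built-in factor $z$ gives $|P_n(z)|\ge e^{ns/4}|z|$ on $\Omega_s$ directly (inequality~(\ref{eq})), so $\Omega_s$ is forward-invariant and orbits escape geometrically. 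Your choice of $p_n$ can be salvaged, but you must use the full strength of the estimate---namely $|p_n(z)|\ge e^{n(g_\Omega(z)-C\log n/\sqrt n)}$, which is enormous---together with Corollary~\ref{CorBern} to show that $p_n(\Omega_s)\subset\Omega_s$ and that $g_\Omega$ grows along orbits; merely recording $|p_n(z)|>1$ throws away exactly what you need.
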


Recall that a compact subset $E$ of the plane is \textit{uniformly perfect} if there exists a real number $a>0$ such that for any $z \in E$ and for any $0<r<\operatorname{diam}(E)$, there is a point $w \in E$ with $ar \leq |z-w| \leq r$. This condition on $E$ is not too restrictive, since, for example, uniformly perfect sets include compact sets consisting of finitely many nontrivial connected components.

A direct consequence of Theorem \ref{mainthm2} is that $s_n(E) \to 0$ as $n \to \infty$. In particular, Theorem \ref{mainthm2} implies Theorem \ref{mainthm1} for uniformly perfect sets, since the compact sets $E_s$ shrink to $E$ as $s$ decreases to $0$. We also mention that results similar to Theorem \ref{mainthm2} were obtained in \cite{AND} and \cite{LAI}, but for the rate of approximation by polynomial lemniscates in Hilbert's Lemniscate Theorem.

Finally, we address the natural question of whether a nonempty connected compact set with connected complement can be approximated by Jordan domain Julia sets.  We prove that, in this case, the Julia sets of our approximating polynomials are not only Jordan curves but quasicircles.

\begin{theorem}\label{t:connected}
Let $E$ be a nonempty connected compact set with connected complement. Then there are polynomials $(P_n)_{n \geq 1}$ whose filled Julia sets are closed Jordan domains totally approximating $E$ in the sense of Definition \ref{DefTotApprox}. Moreover, the polynomials can be constructed such that each $P_n$ is a hyperbolic polynomial whose Julia set $\mathcal{J}(P_n)$ is a quasicircle.
\end{theorem}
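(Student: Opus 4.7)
The plan is to reduce to the case where $E$ is a closed Jordan domain, and then to modify the Fekete polynomial construction underlying Theorem \ref{mainthm1} by adding a translation that creates an attracting fixed point of small multiplier.

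First I would reduce to the closed Jordan domain case. Any nonempty connected compact $E$ with connected complement is a Hausdorff limit of its smoothed $\delta$-tubular neighborhoods $D_\delta$ (each a closed Jordan domain with connected complement), and a direct check shows $d(\partial E, \partial D_\delta) \to 0$ as $\delta \to 0^+$. Since total approximation composes via the triangle inequality (Lemma \ref{lem2}), it suffices to prove the theorem when $E$ is a closed Jordan domain.

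So assume $E$ is a closed Jordan domain, and fix $z_0 \in \operatorname{int}(E)$. Let $F_n$ be the monic Fekete polynomial of degree $n$ for $E$, and consider
$$P_n(z) := z_0 + \lambda_n \bigl(F_n(z) - F_n(z_0)\bigr),$$
which has $z_0$ as a fixed point with multiplier $\lambda_n F_n'(z_0)$. The scaling $\lambda_n \in \ComplexPlane \setminus \{0\}$ is chosen so that $|\lambda_n|^{1/n} \to r/\operatorname{cap}(E)$ for a fixed $r \in (0,1)$; combined with the standard asymptotic $|F_n(z)|^{1/n} \to \operatorname{cap}(E) \, e^{g_\Omega(z,\infty)}$ on compact subsets of $\Omega := \RiemannSphere \setminus E$, this gives $|\lambda_n F_n(z)|^{1/n} \to r \, e^{g_\Omega(z,\infty)}$, and hence total approximation of $E$ by $\mathcal{K}(P_n)$ as in the proof of Theorem \ref{mainthm1}, after letting $r = r_n \to 1^-$ slowly. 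Moreover, $|F_n'(z_0)|^{1/n} \to \operatorname{cap}(E)$ (since $z_0 \in \operatorname{int}(E)$ and $\partial E$ is regular), so the multiplier satisfies $|\lambda_n F_n'(z_0)|^{1/n} \to r < 1$, making $z_0$ a strongly attracting fixed point.

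With the choice above, a careful analysis on a small neighborhood $U \supset E$ shows that $P_n$ maps $U$ into itself and every orbit starting in $U$ converges to $z_0$. The critical points of $P_n$ coincide with those of $F_n$ and cluster on $E$ as $n \to \infty$ by classical potential-theoretic estimates for Fekete polynomials, so for $n$ large they lie in $U$ and are attracted to $z_0$. Hence $P_n$ is hyperbolic with only two Fatou components: the basin of $\infty$ and the immediate basin of $z_0$. The latter coincides with the interior of $\mathcal{K}(P_n)$, so $\mathcal{K}(P_n)$ is a closed Jordan domain. Finally, the classical theorem that the boundary of a simply connected immediate attracting basin of a hyperbolic polynomial is a quasicircle (see e.g. Carleson--Gamelin) yields that $\mathcal{J}(P_n)$ is a quasicircle. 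The main obstacle will be the sharp control of $|\lambda_n F_n|$ on a fixed neighborhood of $E$ (not just in the $1/n$-root asymptotic sense) required to rigorously establish the uniform contraction toward $z_0$, together with the clustering of the critical points of $F_n$ near $E$; both depend on refined potential-theoretic properties of Fekete polynomials and use essentially the reduction to the Jordan domain case.
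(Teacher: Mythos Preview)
Your approach is genuinely different from the paper's, and considerably more complicated. The paper does \emph{not} reduce to Jordan domains and does \emph{not} modify the polynomial to force an attracting fixed point. Instead it uses the polynomials $P_{n,s}(z)=z\,\dfrac{e^{-ns/2}}{\operatorname{cap}(E)^n}\,q_n(z)$ of Theorem~\ref{thmExplicit} exactly as they stand, and exploits a single structural fact: \emph{all the zeros of $P_{n,s}$ lie in $E$} (one zero at $0\in\operatorname{int}(E)$, the others at the Fekete points on $\partial E$). Since $E$ is connected and contained in $\operatorname{int}(\mathcal K(P_{n,s}))$, it sits in a single Fatou component $A$; the fixed point $0\in A$ gives $P_{n,s}(A)\subset A$, and if some other component $A'$ mapped onto $A$ it would have to contain a zero of $P_{n,s}$, which is impossible. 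Hence $A$ is completely invariant, so the Fatou set has exactly two completely invariant components and $\mathcal J(P_{n,s})$ is a Jordan curve by \cite[Theorem~VI.5.3]{CAR}. Complete invariance of $A$ then forces all critical points into $A$ for free \cite[Theorem~V.1.3]{CAR}, yielding hyperbolicity and the quasicircle conclusion. No analysis of where the critical points of $q_n$ sit is ever needed.

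This matters because your argument has a real gap at precisely that point. You assert that the critical points of the Fekete polynomial $F_n$ ``cluster on $E$'' so that for large $n$ they all lie in a fixed neighborhood $U$. Weak-$*$ convergence of the critical-point counting measures to the equilibrium measure is classical, but that is far from saying every critical point is close to $E$: Gauss--Lucas only confines them to the convex hull of $\partial E$, and for a non-convex Jordan domain there is no standard estimate forcing each individual critical point into a prescribed $U\supset E$. Your own closing sentence flags this as the main obstacle, and it is a genuine one. A second, related gap is the claim that $P_n(U)\subset U$ with contraction toward $z_0$: you control $|\lambda_n F_n|^{1/n}$ on $E$ and on $\Omega\setminus U$, but on the annular region $U\setminus E$ the modulus of $F_n$ is growing and there is no reason for $P_n$ to map $U$ into itself. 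The paper sidesteps both difficulties by arguing with zeros instead of critical points.
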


The remainder of the paper is organized as follows. Section \ref{sec2} contains various preliminaries from potential theory. In Section \ref{sec3}, we prove an explicit version of Theorem \ref{mainthm1} using polynomials with zeros at points of any equidistributed sequence with respect to harmonic measure. Section \ref{sec4} is devoted to the special case of Fekete polynomials, which yields a proof of Theorem \ref{mainthm2}. Then, in Section \ref{sec5}, we give an alternate proof of Theorem \ref{mainthm1} using Hilbert's Lemniscate Theorem. Section \ref{sec6} contains the proof of Theorem \ref{mainthm3}. In Section \ref{sec7}, we prove Theorem \ref{t:connected}. Lastly, Section \ref{sec8} is devoted to numerical examples. We first discuss Leja points and prove that our approximation scheme remains valid if Fekete points are replaced by Leja points. This allows us to compute images of Julia sets representing various disconnected shapes.

\section{Preliminaries from potential theory}
\label{sec2}
This section contains various preliminaries from potential theory, including the notions of potential, logarithmic capacity and Green's function. The proofs are quite standard and are found in \cite{RAN} for example, but we include them for the reader's convenience.

We begin with the notions of potential and energy of a measure.

\begin{definition}
Let $\mu$ be a finite positive Borel measure on $\mathbb{C}$ with compact support.

\begin{enumerate}[\rm(i)]
\item The \textit{potential} of $\mu$ is the function $p_\mu : \mathbb{C} \to [-\infty,\infty)$ defined by
$$p_\mu(z):=\int \log{|z-w|} \, d\mu(w) \qquad (z \in \mathbb{C}).$$

\item The \textit{energy} of $\mu$, noted $I(\mu)$, is given by
$$I(\mu):= \int \int \log{|z-w|} \, d\mu(z) \, d\mu(w) = \int p_\mu(z) \, d\mu(z).$$
\end{enumerate}
\end{definition}
One can check that $p_\mu$ is subharmonic on $\mathbb{C}$, harmonic on $\mathbb{C} \setminus (\operatorname{supp}\mu)$ and that
\begin{equation}
\label{eqpotential}
p_{\mu}(z) = \mu(\mathbb{C}) \log{|z|} + O(|z|^{-1})
\end{equation}
as $z \to \infty$.

Now, let $E$ be a nonempty compact subset of $\mathbb{C}$. We assume that the complement of $E$ is connected. Let $\mathcal{P}(E)$ denote the collection of all Borel probability measures supported on $E$.
\begin{definition}
A measure $\nu \in \mathcal{P}(E)$ is an \textit{equilibrium measure} for $E$ if
$$I(\nu) = \sup_{\mu \in \mathcal{P}(E)} I(\mu).$$
\end{definition}

A standard $\operatorname{weak}^*$-convergence argument shows that $E$ always has an equilibrium measure, see \cite[Theorem 3.3.2]{RAN}. Moreover, if the supremum in the definition is not $-\infty$, then the equilibrium measure $\nu$ for $E$ is always unique and supported on $\partial E$ (\cite[Theorem 3.7.6]{RAN}). It also coincides with $\omega_{\Omega}(\cdot, \infty)$, the harmonic measure for $\Omega:=\RiemannSphere \setminus E$ and $\infty$.

The energy of the equilibrium measure is used to define the logarithmic capacity of $E$.

\begin{definition}
The \textit{logarithmic capacity} of $E$ is defined by
$$\operatorname{cap}(E):=e^{I(\nu)},$$
where $\nu$ is the equilibrium measure for $E$.
\end{definition}

\begin{remark}
The quantity $-I(\nu)=-\log{\operatorname{cap(E)}}$ is usually called \textit{Robin's constant}.
\end{remark}

\noindent For example, the capacity of a closed disk is equal to its radius and the capacity of a segment is equal to a quarter of its length.

For the rest of the paper, we shall suppose that $\operatorname{cap}(E)>0$. This can always be assumed without loss of generality in Theorem \ref{mainthm1}, adjoining a small disk in $E$ if necessary. The condition of positive logarithmic capacity is enough to ensure that the Green's function exists.

\begin{definition}
Let $\Omega:=\RiemannSphere \setminus E$ be the complement of $E$ in the Riemann sphere. The \textit{Green's function} for $\Omega$ with pole at $\infty$ is the unique function $g_{\Omega}(\cdot,\infty):\Omega \to (0,\infty]$ such that
\begin{enumerate}[\rm(i)]
\item $g_{\Omega}(\cdot,\infty)$ is harmonic on $\Omega \setminus \{\infty\}$;
\item $g_{\Omega}(\infty,\infty)=\infty$ and as $z \to \infty$,
$$g_{\Omega}(z,\infty) = \log{|z|} + O(1);$$
\item $g_{\Omega}(z,\infty) \to 0$ as $z \to \zeta$, for all $\zeta \in \partial \Omega$ except possibly a set of zero logarithmic capacity.
\end{enumerate}
\end{definition}

The Green's function for $\Omega$ with pole at $\infty$ can be recovered from the potential of the equilibrium measure $\nu$.

\begin{theorem}[Frostman's theorem]
\label{TheoFrostman}
If $\nu$ is the equilibrium measure for $E$, then
\begin{enumerate}[\rm(i)]
\item $p_\nu \geq I(\nu)$ on $\mathbb{C}$;
\item $p_\nu(z) = I(\nu)$ for all $z \in E$ except possibly a set of zero logarithmic capacity;
\item $g_{\Omega}(z,\infty) = p_{\nu}(z)-I(\nu) \qquad (z \in \Omega \setminus \{\infty\}).$
\end{enumerate}
\end{theorem}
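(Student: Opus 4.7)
My approach would be the classical variational method of pinning down $p_\nu$ by an Euler--Lagrange-type argument, then identifying the Green's function by uniqueness. The proof is standard; see \cite{RAN} for details.

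\textbf{Step 1 (variational inequality).} For any $\mu \in \mathcal{P}(E)$ of finite energy and $t \in [0,1]$, the convex combination $\nu_t := (1-t)\nu + t\mu$ lies in $\mathcal{P}(E)$, and
\[
I(\nu_t) = (1-t)^2 I(\nu) + 2t(1-t)\int p_\nu\,d\mu + t^2 I(\mu).
\]
Differentiating at $t=0$ and using that $\nu$ maximizes energy yields $\int p_\nu\, d\mu \le I(\nu)$ for all such $\mu$. Testing this against a finite-energy probability measure supported on the Borel set $\{z \in E : p_\nu(z) > I(\nu) + \varepsilon\}$---which exists whenever that set has positive capacity---forces it to be polar for each $\varepsilon > 0$; a countable union over $\varepsilon = 1/n$ gives $p_\nu \le I(\nu)$ quasi-everywhere on $E$. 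Since $\nu$ has finite energy and therefore charges no polar set, this, together with $\int p_\nu\,d\nu = I(\nu)$, forces $p_\nu = I(\nu)$ $\nu$-almost everywhere.

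\textbf{Step 2 (Assertion (i) and (ii)).} On $\Omega \setminus \{\infty\}$, the function $u := p_\nu - I(\nu)$ is harmonic with $u(z) = \log|z| + O(1)$ at $\infty$ by \eqref{eqpotential}. Lower semicontinuity of $p_\nu$ together with Step 1 gives $\liminf_{z\to\zeta}u(z)\ge 0$ at quasi-every $\zeta\in\partial\Omega$, so the minimum principle for harmonic functions with polar boundary exceptions yields $u\ge 0$ on $\Omega$. Since $\{u<0\}\cap\Omega$ is open and of zero capacity, it is empty, so $p_\nu \ge I(\nu)$ everywhere on $\Omega$. Inside $E$, $p_\nu$ is harmonic on the interior of $E$ (as $\operatorname{supp}(\nu)\subset\partial E$), so the same bound extends there by the mean-value property; the remaining boundary points are handled by the principle of domination. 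This gives (i), and combining (i) with the quasi-everywhere upper bound of Step 1 gives (ii).

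\textbf{Step 3 (Assertion (iii)).} The function $u$ above is harmonic on $\Omega \setminus \{\infty\}$, has $u(z) = \log|z| + O(1)$ at $\infty$ (with $u(\infty) = \infty$), and by (ii) satisfies $u \to 0$ at quasi-every point of $\partial\Omega$. These are precisely the three defining properties of $g_\Omega(\cdot,\infty)$, so uniqueness of the Green's function yields $u = g_\Omega(\cdot,\infty)$ on $\Omega\setminus\{\infty\}$, which is (iii).

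\textbf{Main obstacle.} The technical core is the lifting from ``$\nu$-almost everywhere'' to ``quasi-everywhere'' and then to ``everywhere'', which rests on two standard but non-trivial potential-theoretic facts: every Borel set of positive capacity supports a probability measure of finite energy, and the minimum principle (with polar exceptions) for logarithmic potentials, together with the closely related principle of domination. Once these tools are in place, the variational step and the identification of the Green's function are essentially mechanical.
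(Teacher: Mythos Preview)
The paper does not supply its own proof of this result; it simply refers the reader to \cite[Theorem 3.3.4]{RAN}. Your outline is the classical variational route found there, so in overall strategy you are aligned with what the paper (implicitly) has in mind.

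That said, Step~2 as written contains a slip that is more than cosmetic. With the paper's sign convention $p_\nu(z)=\int\log|z-w|\,d\nu(w)$, the potential $p_\nu$ is subharmonic and therefore \emph{upper} semicontinuous, not lower semicontinuous. More importantly, Step~1 only yields the \emph{upper} bound $p_\nu\le I(\nu)$ quasi-everywhere on $E$ (together with equality $\nu$-a.e.); it produces no lower bound on $u$ at boundary points, so the claimed inequality $\liminf_{z\to\zeta}u(z)\ge 0$ at quasi-every $\zeta\in\partial\Omega$ does not follow from what you have. The standard repair is to first note---using upper semicontinuity together with $p_\nu=I(\nu)$ $\nu$-a.e.---that in fact $p_\nu\ge I(\nu)$ at \emph{every} point of $\operatorname{supp}\nu$ (a point $z_0\in\operatorname{supp}\nu$ with $p_\nu(z_0)<I(\nu)$ would force $p_\nu<I(\nu)$ on a whole neighbourhood of positive $\nu$-measure, contradicting the $\nu$-a.e.\ equality). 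Only then does one invoke the domination/maximum principle you correctly name in your ``Main obstacle'' paragraph to push the inequality from $\operatorname{supp}\nu$ to the rest of $\mathbb{C}$. Once (i) is secured globally, your Step~3 identification of $u$ with $g_\Omega(\cdot,\infty)$ via uniqueness is correct.
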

See (\cite[Theorem 3.3.4]{RAN}). Note that combining (iii) with Equation (\ref{eqpotential}), we get
$$I(\nu)=\log \operatorname{cap}(E) = \lim_{z \to \infty} (\log{|z|} - g_{\Omega}(z,\infty)).$$

\begin{remark}
For sufficiently nice compact sets $E$, the potential $p_\nu$ is continuous on $\mathbb{C}$ and the exceptional set of zero capacity in (ii) is in fact empty. More precisely, again under the assumption that $\operatorname{cap}(E)>0$, the equality $p_\nu(z) = I(\nu)$ holds for all $z \in E$ if and only if every point of $\partial E$ is regular for the Dirichlet Problem. A sufficient condition for this to hold is that $E$ is uniformly perfect. See \cite[Theorem 3.1.3]{RAN} and \cite[Theorem 4.2.4]{RAN}
\end{remark}

\section{Constructive approximation by Julia sets}
\label{sec3}

In this section, we prove an explicit version of Theorem \ref{mainthm1}.

As before, let $E \subset \mathbb{C}$ be a compact set with connected complement $\Omega$. We will construct polynomials whose filled Julia sets totally approximate $E$, in the sense of Definition \ref{DefTotApprox}.

First, we can assume that the interior of $E$ is not empty, adjoining a small disk in $E$ if necessary. We can further suppose without loss of generality that $0$ is an interior point of $E$, since otherwise it suffices to conjugate the polynomial by a suitable translation. Note that the above ensures that $\operatorname{cap}(E)>0$. For reasons that will soon become clear, we shall assume in addition that $E$ is uniformly perfect.

Now, let $(q_n)_{n \geq 1}$ be any sequence of monic polynomials of degree $n$ having all their zeros in $\partial E$ such that the counting measures
$$\mu_n:= \frac{1}{n} \sum_{\zeta \in q_n^{-1}(\{0\})} \delta_{\zeta}$$
converge $\operatorname{weak}^*$ to $\nu$, the equilibrium measure for $E$.

We will need the following lemma on the behavior of $|q_n|^{1/n}$.

\begin{lemma}
\label{lemmaTom}
We have

\begin{enumerate}[\rm(i)]
\item $|q_n|^{1/n} \to e^{p_{\nu}}$ locally uniformly on $\Omega = \RiemannSphere \setminus E$;
\item $\lim_{n \to \infty} \|q_n\|_E^{1/n} = \operatorname{cap}(E)$.
\end{enumerate}
Here $\|q_n\|_E =  \sup_{z \in E} |q_n(z)|.$

\end{lemma}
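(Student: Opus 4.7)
The plan is to derive both parts from the identity
$$\frac{1}{n}\log|q_n(z)| = \int \log|z-w|\,d\mu_n(w) = p_{\mu_n}(z),$$
which reduces both (i) and (ii) to studying the convergence $p_{\mu_n} \to p_\nu$. Since the only input is the weak$^*$ convergence $\mu_n \to \nu$, the proof is an exercise in promoting weak convergence of measures to the appropriate form of convergence of their logarithmic potentials.

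For (i), I fix a compact set $K \subset \Omega$, so $\delta := d(K,E) > 0$. On $K \times E$ the kernel $(z,w) \mapsto \log|z-w|$ is bounded and uniformly Lipschitz in $z$ with constant $1/\delta$. For each fixed $z \in K$, the function $w \mapsto \log|z-w|$ is continuous on $E$, so weak$^*$ convergence gives pointwise $p_{\mu_n}(z) \to p_\nu(z)$ on $K$; the uniform Lipschitz control upgrades this to uniform convergence on $K$. Exponentiating yields (i).

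For the lower bound in (ii), the Bernstein--Walsh inequality $|q_n(z)| \leq \|q_n\|_E \, e^{n g_\Omega(z,\infty)}$ on $\Omega$, combined with $q_n$ being monic and $g_\Omega(z,\infty) = \log|z| - I(\nu) + o(1)$ at $\infty$, forces $\|q_n\|_E \geq \operatorname{cap}(E)^n$ after dividing by $|z|^n$ and letting $z \to \infty$.

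The matching upper bound is the main obstacle, because the weak$^*$ convergence gives no direct control on $p_{\mu_n}$ over $E$, where it has $-\infty$ singularities at the zeros of $q_n$ on $\partial E$. The plan is to transfer the bound from $\Omega$ to $E$ via subharmonicity, and this is precisely where uniform perfectness of $E$ enters: by the remark after Frostman's theorem it guarantees that $g_\Omega(\cdot,\infty)$ extends continuously to $\partial E$ with value $0$. For $\epsilon > 0$, the level set $\Gamma_\epsilon := \{z \in \Omega : g_\Omega(z,\infty) = \epsilon\}$ is thus a compact subset of $\Omega$ bounding the bounded open set $U_\epsilon := E \cup \{z \in \Omega : g_\Omega(z,\infty) < \epsilon\}$. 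On $\Gamma_\epsilon$ the convergence in (i) is uniform, so $|q_n|^{1/n} \leq \operatorname{cap}(E)\, e^{2\epsilon}$ for all large $n$. Since $|q_n|$ is subharmonic on $\mathbb{C}$, the maximum principle on $U_\epsilon$ propagates this bound to all of $E$, giving $\limsup_n \|q_n\|_E^{1/n} \leq \operatorname{cap}(E)\, e^{2\epsilon}$; letting $\epsilon \to 0$ completes the proof.
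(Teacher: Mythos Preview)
Your proof is correct. The upper bound in (ii) is exactly the paper's argument (choose a bounded neighborhood of $E$ on which $p_\nu\le I(\nu)+\epsilon$, apply (i) on its boundary, then the maximum principle); you have simply specialized the neighborhood to a sublevel set of $g_\Omega$. The two genuine differences are tactical. For (i), the paper upgrades pointwise to locally uniform convergence via Montel's theorem applied to holomorphic branches $q_n^{1/n}$ on disks $D\Subset\mathbb{C}\setminus E$, rather than your real-variable equi-Lipschitz/Arzel\`a--Ascoli route; both work, though note your Lipschitz constant $1/\delta$ is only literally valid for nearby $z,z'$---what you actually obtain is equicontinuity, which is all you need. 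For the lower bound in (ii), the paper avoids Bernstein--Walsh and instead integrates: by Fubini and Frostman, $\frac{1}{n}\int\log|q_n|\,d\nu=\int p_\nu\,d\mu_n\ge I(\nu)$, yielding $\|q_n\|_E^{1/n}\ge\operatorname{cap}(E)$ without importing a result (Theorem~\ref{thmBernstein}) that in the paper's logical order appears only later. One small omission: your argument for (i) handles only compact subsets of $\mathbb{C}\setminus E$; the paper treats neighborhoods of $\infty$ separately via the expansion $p_\mu(z)=\log|z|+O(|z|^{-1})$, and you should add the same step.
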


\begin{proof}
The following proof is due to Thomas Ransford.

To prove (i), fix $z \in \mathbb{C} \setminus E$. Since the function $w \mapsto \log{|z-w|}$ is continuous on $\partial E$, we have, by $\operatorname{weak}^*$ convergence,
$$\int_{\partial E} \log{|z-w|} \, d\mu_n(w) \to \int_{\partial E} \log{|z-w|} \, d\nu(w),$$
i.e.
\begin{equation}
\label{simpleconv}
\frac{1}{n} \log{|q_n(z)|} \to p_\nu(z),
\end{equation}
and this holds for all $z \in \mathbb{C} \setminus E$. Now, let $D$ be a disk whose closure is contained in $\mathbb{C} \setminus E$. Note that the sequence $(q_n^{1/n})_{n \geq 1}$ is uniformly bounded on $D$, so by Montel's theorem and (\ref{simpleconv}), every subsequence has a subsequence converging uniformly to some analytic function $h$ on $D$ with $|h|=e^{p_{\nu}}$. This shows that $|q_n|^{1/n} \to e^{p_{\nu}}$ locally uniformly on $\mathbb{C} \setminus E$. The fact that $|q_n|^{1/n} \to e^{p_{\nu}}$ uniformly near $\infty$ follows from Equation (\ref{eqpotential}).

To prove (ii), first note that
$$\int_{\partial E} \frac{1}{n} \log{|q_n(z)|} \, d\nu(z) = \int_{\partial E} \int_{\partial E} \log{|z-w|} \, d\nu(z) d\mu_n(w) = \int_{\partial E} p_{\nu}(w) \, d\mu_n(w) \geq I(\nu),$$
the last inequality because $p_{\nu} \geq I(\nu)$ on $\mathbb{C}$, by Theorem \ref{TheoFrostman}. It follows that
\begin{equation}
\label{equa1}
\|q_n\|_E ^{1/n} \geq e^{I(\nu)}=\operatorname{cap}(E) \qquad (n \geq 1).
\end{equation}
For the other direction, recall that since $E$ is uniformly perfect, the potential $p_{\nu}$ is continuous on $\mathbb{C}$ and satisfies $p_{\nu}=I(\nu)$ everywhere on $E$ (see the remark following Theorem \ref{TheoFrostman}). Thus, given $\epsilon>0$, there exists a bounded neighborhood $V$ of $E$ such that $p_{\nu} \leq I(\nu) + \epsilon$ on $V$. By (i), the functions $|q_n|^{1/n}$ converge uniformly to $e^{p_\nu}$ on $\partial V$, so there exists $N$ such that, for all $n \geq N$ and $z \in \partial V$, we have
$$|q_n(z)|^{1/n} \leq e^{p_\nu(z)}+\epsilon \leq e^{\epsilon}\operatorname{cap}(E) + \epsilon.$$
By the maximum principle, the same inequality holds for all $n \geq N$ and all $z \in E$, from which it follows that
$$\limsup_{n \to \infty} \|q_n\|_E^{1/n} \leq e^{\epsilon}\operatorname{cap}(E) + \epsilon.$$
Letting $\epsilon \to 0$ and combining with (\ref{equa1}), we get (ii).

\end{proof}

We can now prove the following explicit version of Theorem \ref{mainthm1}.

\begin{theorem}
\label{thmExplicit}
Let $E$ and $(q_n)_{n \geq 1}$ be as above. For $s>0$ and $n \in \mathbb{N}$, define the polynomial
$$P_{n,s}(z):=z\frac{e^{-ns/2}}{\operatorname{cap}(E)^n} q_n(z).$$
Then for any bounded neighborhood $U$ of $E$, there exist $s$ and $n$ such that
$$E \subset \operatorname{int}(\mathcal{K}(P_{n,s})) \subset U.$$

\end{theorem}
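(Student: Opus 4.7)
The plan is to show that for a suitably small $s>0$ depending on $U$, and all sufficiently large $n$, the polynomial $P_{n,s}$ satisfies both inclusions. The argument plays the two parts of Lemma \ref{lemmaTom} against each other: the upper bound $\|q_n\|_E^{1/n}\to\operatorname{cap}(E)$ on $E$ will give the inner inclusion, while the lower bound $|q_n(z)|^{1/n}\to\operatorname{cap}(E)\,e^{g_\Omega(z,\infty)}$ (locally uniformly on $\Omega$, using Frostman's theorem to rewrite $p_\nu$) will give the outer inclusion. To set up an escape radius, fix an upper bound $s_0>0$ for $s$. Since all zeros of $q_n$ lie in $E\subset\{|z|\leq R_E\}$ with $R_E:=\max_{E}|z|$, the factorised bound $|q_n(z)|\geq(|z|-R_E)^n$ combined with the explicit leading coefficient of $P_{n,s}$ produces some $R^*=R^*(E,s_0)$ with $|P_{n,s}(z)|>2|z|$ whenever $|z|>R^*$, for every $s\in(0,s_0]$ and every $n\geq 1$; in particular $\mathcal{K}(P_{n,s})\subset\{|z|\leq R^*\}$.

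Next, set $K:=\{|z|\leq R^*\}\setminus U$. Since $U\supset E$, $K$ is a compact subset of $\Omega$ (if $K=\emptyset$ we are already done). Put $s^*:=\min_{K}g_\Omega(\cdot,\infty)>0$ and fix $s$ with $0<s<\min(s_0,2s^*)$.

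For the inner inclusion, pick $r_0>0$ with $\{|z|\leq r_0\}\subset\operatorname{int}(E)$, using $0\in\operatorname{int}(E)$. Writing Lemma \ref{lemmaTom}(ii) in the form $\|q_n\|_E=\operatorname{cap}(E)^n e^{n\eta_n}$ with $\eta_n\to 0$, I get $|P_{n,s}(z)|\leq R_E\,e^{n(\eta_n-s/2)}\to 0$ uniformly on $E$, so for large $n$, $P_{n,s}(E)\subset\{|w|\leq r_0\}\subset\operatorname{int}(E)$. By continuity of $P_{n,s}$ there is then an open $V\supset E$ with $P_{n,s}(V)\subset\operatorname{int}(E)\subset V$, and forward invariance of $V$ gives $V\subset\mathcal{K}(P_{n,s})$, whence $E\subset V\subset\operatorname{int}(\mathcal{K}(P_{n,s}))$.

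For the outer inclusion, apply Lemma \ref{lemmaTom}(i) on the compact $K\subset\Omega$: for any $\eta>0$ and all large $n$, $|q_n(z)|\geq\operatorname{cap}(E)^n e^{n(g_\Omega(z,\infty)-\eta)}$ uniformly on $K$. Using $g_\Omega\geq s^*$ on $K$ and choosing $\eta<s^*-s/2$ yields $|P_{n,s}(z)|\geq|z|\,e^{n(s^*-s/2-\eta)}$, which (since $|z|$ is bounded below on $K$ by any $\rho$ with $\{|z|<\rho\}\subset U$) exceeds $R^*$ uniformly on $K$ once $n$ is large. Thus $P_{n,s}(K)\subset\{|w|>R^*\}$, so iterates starting in $K$ escape and $K\cap\mathcal{K}(P_{n,s})=\emptyset$; combining with $\mathcal{K}(P_{n,s})\subset\{|z|\leq R^*\}$ yields $\mathcal{K}(P_{n,s})\subset\{|z|\leq R^*\}\cap U\subset U$. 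The main obstacle is the balance encoded by the choice of $s$: it must be small enough (below $2s^*$) for the exponent $s^*-s/2$ to remain strictly positive on the compact bad set $K$, while still allowing the factor $e^{-ns/2}$ to dominate the upper bound on $E$, which it can do precisely because $\|q_n\|_E^{1/n}\to\operatorname{cap}(E)$ exactly cancels the $\operatorname{cap}(E)^{-n}$ factor built into $P_{n,s}$.
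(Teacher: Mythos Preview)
Your proof is correct and follows essentially the same strategy as the paper's: use Lemma~\ref{lemmaTom}(ii) together with the factor $e^{-ns/2}$ to force $P_{n,s}(E)$ into a small disk about $0$, and use Lemma~\ref{lemmaTom}(i) together with Frostman's theorem to show that points outside $U$ escape.

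The one organizational difference is in the outer inclusion. The paper applies the locally uniform convergence of Lemma~\ref{lemmaTom}(i) directly on the compact set $\widehat{\mathbb{C}}\setminus U$ (which includes a neighbourhood of $\infty$), obtaining a growth estimate $|P_{n,s}(z)|>e^{ns/4}|z|$ valid for every $z\notin U$; forward invariance of $\widehat{\mathbb{C}}\setminus U$ and divergence of the iterates then follow in one stroke. You instead introduce an explicit escape radius $R^*$ from the factorised lower bound $|q_n(z)|\geq(|z|-R_E)^n$, and then need the convergence of Lemma~\ref{lemmaTom}(i) only on the bounded compact set $K=\{|z|\leq R^*\}\setminus U$. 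Both routes are valid; yours is slightly more elementary in that it avoids invoking the uniform convergence near $\infty$, at the price of one extra step.
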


In particular, the set $E$ is totally approximable by the filled Julia sets of the polynomials $P_{n,s}$, since $U$ can be made arbitrarily close to $E$.

\begin{proof}

Fix $s>0$ sufficiently small so that
$$g_{\Omega}(z,\infty)>s \qquad (z \notin U).$$
By Lemma \ref{lemmaTom}, we can choose $n \in \mathbb{N}$ sufficiently large so that the following conditions hold :

\begin{enumerate}[\rm(i)]
\item $\displaystyle \left(\frac{R}{r}\right)^{1/n} \frac{\|q_n\|_E^{1/n}}{\operatorname{cap}(E)} < e^{s/2} \qquad (z \in E)$
\item $\displaystyle \left| \frac{1}{n} \log{|q_n(z)|} - p_\nu(z) \right| \leq \frac{s}{4} \qquad (z \notin U)$
\item $\displaystyle r e^{ns/4} > R,$

\end{enumerate}
where $r, R>0$ are such that $\overline{\mathbb{D}}(0,r) \subset E$ and $U \subset \mathbb{D}(0,R)$.

First, note that condition (i) implies that for $z \in E$,
$$|P_{n,s}(z)| < R \frac{e^{-ns/2}}{\operatorname{cap}(E)^n} \|q_n\|_E \leq R e^{-ns/2} e^{ns/2}\frac{r}{R} = r,$$
so that in particular, $P_{n,s}(E) \subset \mathbb{D}(0,r) \subset E$, which clearly implies that $E \subset \operatorname{int}(\mathcal{K}(P_{n,s}))$.

On the other hand, if $z \notin U$, then by (ii) and Theorem \ref{TheoFrostman}, we get
$$g_\Omega(z,\infty) -\frac{s}{4} \leq \frac{1}{n} \log{|q_n(z)|} - I(\nu)$$
and thus
$$\exp \left(n \left(g_\Omega(z,\infty) - \frac{s}{2} - \frac{s}{4} \right) \right) \leq \frac{e^{-ns/2}}{\operatorname{cap}(E)^n} |q_n(z)| = \left|\frac{P_{n,s}(z)}{z}\right|.$$
Since $g_\Omega(z,\infty) > s$, we obtain
\begin{equation}
\label{eqb}
e^{ns/4}|z| < |P_{n,s}(z)|.
\end{equation}
Now, by (iii), we get
$$|P_{n,s}(z)| > re^{ns/4} > R,$$
so that $P_{n,s}(z) \notin U$ whenever $z \notin U$. Equation (\ref{eqb}) then shows that the iterates of $P_{n,s}$ converge to $\infty$ on $\RiemannSphere \setminus U$, so that $\RiemannSphere \setminus U \subset \RiemannSphere \setminus \mathcal{K}(P_{n,s})$ or, equivalently, $ \mathcal{K}(P_{n,s}) \subset U$.

\end{proof}

\section{Fekete points}
\label{sec4}

In this section, we will see how using Fekete points for the zeros of the polynomials $q_n$ of Theorem \ref{thmExplicit} gives a more explicit version of that result which has Theorem \ref{mainthm2} as a corollary.

First, we need some definitions and the basic properties of Fekete points. As before, let $E$ be a nonempty compact set with connected complement, and assume that $\operatorname{cap}(E)>0$.

\begin{definition}
For $n \geq 2$, the $n$\textit{-th diameter} of $E$ is given by
$$\delta_n(E):=\sup_{w_1,\dots,w_n\in E} \prod_{j<k}|w_j-w_k|^{\frac{2}{n(n-1)}}.$$

\end{definition}
Recall from the introduction that an $n$-tuple $w_1^n,\dots,w_n^n \in E$ for which the supremum is attained is called a Fekete $n$-tuple for $E$.

Since $E$ is compact and nonempty, a Fekete $n$-tuple always exists, though it need not be unique. Moreover, the maximum principle shows that any Fekete $n$-tuple lies in $\partial E$. Also, the quantity $\delta_2(E)$ is the usual diameter of $E$, and $\delta_n(E) \leq \delta_2(E)$ for all $n$. In fact, the sequence $(\delta_n(E))_{n \geq 2}$ is decreasing and has a limit, which is nothing other than the logarithmic capacity of $E$.

\begin{theorem}[Fekete--Szeg\"{o}]
\label{thmFekSze}
The sequence $(\delta_n(E))_{n \geq 2}$ is decreasing and
$$\lim_{n \to \infty} \delta_n(E) = \operatorname{cap}(E).$$
\end{theorem}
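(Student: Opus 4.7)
My plan is to prove the monotonicity by a subset-deletion argument, then establish the limit by two separate inequalities: $\delta_n(E)\geq\operatorname{cap}(E)$ for every $n$, and $\limsup_n \delta_n(E)\leq \operatorname{cap}(E)$.

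For monotonicity, I would fix a Fekete $(n+1)$-tuple $w_1,\dots,w_{n+1}$ for $E$. For each index $j\in\{1,\dots,n+1\}$, deleting $w_j$ leaves an $n$-tuple in $E$, hence $\prod_{k<\ell,\,k,\ell\neq j}|w_k-w_\ell| \leq \delta_n(E)^{n(n-1)/2}$. Multiplying these $n+1$ inequalities together and using that each pair $(k,\ell)$ with $k<\ell$ survives in exactly $n-1$ of the subsets (every $j$ different from $k$ and $\ell$), the left-hand side collapses to $\bigl(\prod_{k<\ell}|w_k-w_\ell|\bigr)^{n-1}=\delta_{n+1}(E)^{(n+1)n(n-1)/2}$, while the right-hand side is $\delta_n(E)^{(n+1)n(n-1)/2}$. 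Extracting roots gives $\delta_{n+1}(E)\leq \delta_n(E)$.

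For the lower bound $\delta_n(E)\geq\operatorname{cap}(E)$, I would integrate the pointwise inequality $\sum_{j<k}\log|z_j-z_k|\leq \frac{n(n-1)}{2}\log\delta_n(E)$, valid for all $z_1,\dots,z_n\in E$, against the product measure $d\nu(z_1)\cdots d\nu(z_n)$, where $\nu$ is the equilibrium measure for $E$. By Fubini, each of the $n(n-1)/2$ summands on the left contributes $I(\nu)=\log\operatorname{cap}(E)$, and dividing out the combinatorial factor yields the claim.

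The remaining direction $\limsup_n \delta_n(E)\leq \operatorname{cap}(E)$ is the crux. Let $\mu_n:=\frac{1}{n}\sum_{j}\delta_{w_j^n}$ be the normalized counting measure of a Fekete $n$-tuple; by Helly's theorem some subsequence converges $\operatorname{weak}^*$ to a measure $\mu\in\mathcal{P}(E)$. The main obstacle is that the kernel $\log|z-w|$ has a diagonal singularity, so the naive energy $\iint\log|z-w|\,d\mu_n\,d\mu_n$ is $-\infty$ and direct $\operatorname{weak}^*$ passage to the limit is unavailable. I would circumvent this with the standard truncation: put $\log^M(x):=\max(\log x,-M)$, which is continuous on $[0,\infty)$ and uniformly bounded on $E\times E$. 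Then
$$\frac{n-1}{n}\log\delta_n(E)=\frac{1}{n^2}\sum_{j\neq k}\log|w_j^n-w_k^n|\leq \iint \log^M|z-w|\,d\mu_n(z)\,d\mu_n(w),$$
and the right-hand side converges along the subsequence to $\iint\log^M|z-w|\,d\mu(z)\,d\mu(w)$ by $\operatorname{weak}^*$ convergence of $\mu_n\otimes\mu_n$ to $\mu\otimes\mu$. Letting $M\to\infty$ (monotone convergence) gives $\limsup_n \log \delta_n(E)\leq I(\mu)\leq I(\nu)=\log\operatorname{cap}(E)$, where the last inequality is the defining extremal property of the equilibrium measure. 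Combined with the previous paragraph, this closes the loop.
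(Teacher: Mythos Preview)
The paper does not prove this result itself; it simply refers to \cite[Theorem~5.5.2]{RAN}. Your argument is the standard textbook proof of the Fekete--Szeg\H{o} theorem (and is essentially the one given in Ransford), so there is nothing substantive to compare.

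There is one small slip worth correcting. In your last displayed inequality you write
\[
\frac{1}{n^2}\sum_{j\neq k}\log|w_j^n-w_k^n|\;\leq\; \iint \log^M|z-w|\,d\mu_n(z)\,d\mu_n(w),
\]
but the double integral on the right includes the $n$ diagonal terms $j=k$, each of which contributes $\log^M 0=-M$. The correct inequality is therefore
\[
\frac{n-1}{n}\log\delta_n(E)\;\leq\; \iint \log^M|z-w|\,d\mu_n(z)\,d\mu_n(w)+\frac{M}{n}.
\]
Since you pass to the limit $n\to\infty$ along the subsequence with $M$ held fixed, and only afterwards let $M\to\infty$, the extra $M/n$ disappears and the remainder of your argument is unaffected.
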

See \cite[Theorem 5.5.2]{RAN}.

\begin{definition}
A \textit{Fekete polynomial} for $E$ of degree $n$ is a polynomial of the form
$$q(z):=\prod_{j=1}^n(z-w_j^n),$$
where $w_1^n,\dots,w_n^n$ is a Fekete $n$-tuple for $E$.
\end{definition}

As before, we denote the sup-norm of the polynomial $q$ on $E$ by $\|q\|_E$, i.e.
$$\|q\|_E= \sup_{z \in E} |q(z)|.$$

\begin{theorem}
\label{thmFek}
Let $q$ be a monic polynomial of degree $n$. Then
$$\|q\|_E^{1/n} \geq \operatorname{cap}(E).$$
If in addition $q$ is a Fekete polynomial for $E$, then
$$\|q\|_E^{1/n} \leq \delta_n(E).$$
\end{theorem}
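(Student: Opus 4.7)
The two inequalities are essentially independent and I would treat them by different techniques: the first via potential theory, the second via the combinatorial extremality defining Fekete tuples.

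For the lower bound $\|q\|_E^{1/n} \geq \operatorname{cap}(E)$, write $q(z) = \prod_{k=1}^n(z-z_k)$ and let $\sigma := \frac{1}{n}\sum_{k=1}^n \delta_{z_k}$ be the normalized counting measure on its zeros, so that $p_\sigma(z) = \frac{1}{n}\log|q(z)|$. I would integrate $p_\sigma$ against the equilibrium measure $\nu$ and swap the order of integration via Fubini:
\[ \int p_\sigma\,d\nu \;=\; \int\!\!\int \log|z-w|\,d\sigma(w)\,d\nu(z) \;=\; \int p_\nu\,d\sigma. \]
The right-hand side is bounded below by $I(\nu)=\log\operatorname{cap}(E)$, thanks to Frostman's theorem (Theorem~\ref{TheoFrostman}(i)), which gives $p_\nu \geq I(\nu)$ everywhere on $\mathbb{C}$. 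Since $\nu$ is a probability measure supported on $E$, the left-hand side is bounded above by $\frac{1}{n}\log\|q\|_E$. Combining these estimates yields $\operatorname{cap}(E) \leq \|q\|_E^{1/n}$.

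For the upper bound $\|q\|_E^{1/n} \leq \delta_n(E)$ when $q$ is a Fekete polynomial, I would exploit the defining extremality of the Fekete tuple $w_1^n,\ldots,w_n^n$. Fix $z \in E$ and $k \in \{1,\ldots,n\}$, and substitute $z$ for $w_k^n$; the resulting $n$-tuple in $E$ has a product at most $\delta_n(E)^{n(n-1)/2}$. All factors not involving the index $k$ cancel, leaving
\[ \prod_{j \neq k}|z-w_j^n| \;\leq\; \prod_{j \neq k}|w_k^n-w_j^n|. \]
Multiplying these $n$ inequalities across $k$, the left-hand side becomes $|q(z)|^{n-1}$ (each factor $|z-w_j^n|$ appears $n-1$ times), while the right-hand side equals $\prod_{k \neq j}|w_k^n - w_j^n| = \delta_n(E)^{n(n-1)}$. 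Hence $|q(z)| \leq \delta_n(E)^n$, and taking the supremum over $z \in E$ followed by an $n$-th root gives the claim.

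The only real technicality is in the Fubini step of the lower bound, since $\log|z-w| = -\infty$ on the diagonal. This is harmless: the standing assumption $\operatorname{cap}(E)>0$ forces $I(\nu) > -\infty$, so $\nu$ can have no atoms (a single atom would make the energy $-\infty$), and the finitely many zeros of $q$ form a $\nu$-null set. Together with Frostman(i), which makes $p_\nu$ finite everywhere on $\mathbb{C}$, both iterated integrals are well-defined and finite, so Fubini applies without issue.
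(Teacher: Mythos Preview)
Your proof is correct, but both halves differ from the paper's.

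For the lower bound, the paper argues via monotonicity of capacity: from $E \subset q^{-1}(\overline{\mathbb{D}}(0,\|q\|_E))$ it gets $\operatorname{cap}(E) \leq \operatorname{cap}\bigl(q^{-1}(\overline{\mathbb{D}}(0,\|q\|_E))\bigr) = \|q\|_E^{1/n}$, the last equality coming from the explicit Green's function of a filled lemniscate. Your Fubini/Frostman argument is also valid---indeed it is precisely the computation the paper carries out elsewhere (Lemma~\ref{lemmaTom}, inequality~(\ref{equa1}))---and is more self-contained here, relying only on Theorem~\ref{TheoFrostman} rather than on the transformation law for capacity under polynomial preimages; the paper's route, in exchange, sidesteps the Fubini justification entirely. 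For the upper bound, the paper adjoins $z$ to the Fekete $n$-tuple to form an $(n+1)$-tuple, bounds its product by $\delta_{n+1}(E)^{n(n+1)/2}$, and then invokes the monotonicity $\delta_{n+1}(E) \leq \delta_n(E)$ from Theorem~\ref{thmFekSze}. Your substitution-and-multiply argument stays entirely within $n$-tuples and never needs the monotonicity of $(\delta_n)$, so it is marginally more elementary; both land on $|q(z)| \leq \delta_n(E)^n$.
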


\begin{proof}

To prove the first statement, it suffices to observe that since $E \subset q^{-1}\left(\overline{\mathbb{D}}(0,\|q\|_E)\right)$, we have
$$\operatorname{cap}(E) \leq \operatorname{cap}\left(q^{-1}\left(\overline{\mathbb{D}}(0,\|q\|_E)\right) \right),$$
by the monotonicity of logarithmic capacity. The quantity on the right-hand side is easily seen to be equal to $\operatorname{cap}\left(\overline{\mathbb{D}}(0,\|q\|_E)\right)^{1/n} = \|q\|_E^{1/n}$, since in this case the Green's function can be identified explicitly. This proves the first statement.

To prove the second statement, write $q(z):=\prod_{j=1}^n(z-w_j^n)$ where $w_1^n,\dots,w_n^n$ is a Fekete $n$-tuple for $E$. If $z \in E$, then $z,w_1^n,\dots,w_n^n$ is an $(n+1)$-tuple in $E$, so
$$\prod_{i=1}^n |z-w_i^n| \prod_{j<k} |w_j^n-w_k^n| \leq \delta_{n+1}(E)^{n(n+1)/2},$$
and hence
$$|q(z)| \leq \frac{\delta_{n+1}(E)^{n(n+1)/2}}{\delta_n(E)^{n(n-1)/2}} \leq \frac{\delta_n(E)^{n(n+1)/2}}{\delta_n(E)^{n(n-1)/2}} = \delta_n(E)^n,$$
as required.

\end{proof}

Before stating the next result, we need the notion of Harnack distance.

\begin{definition}
Let $D$ be a domain in $\RiemannSphere$. Given $z,w \in D$, the \textit{Harnack distance} between $z$ and $w$ is the smallest number $\tau_D(z,w)$ such that, for every positive harmonic function $u$ on $D$,
$$\tau_D(z,w)^{-1} u(w) \leq u(z) \leq \tau_D(z,w)u(w).$$
\end{definition}
 The existence of $\tau_D$ is a simple consequence of Harnack's inequality. It is not difficult to show that $\log \tau_D(z,w)$ is a continuous semimetric on $D$ (\cite[Theorem 1.3.8]{RAN}).

\begin{theorem}[Bernstein's Lemma]
\label{thmBernstein}
Let $E$ be as before, and let $\Omega=\RiemannSphere \setminus E$. If $q$ is a polynomial of degree $n$, then
$$\left( \frac{|q(z)|}{\|q\|_E}\right)^{1/n} \leq e^{g_\Omega(z,\infty)} \qquad (z \in \Omega).$$
If in addition $q$ is a Fekete polynomial for $E$, then
$$\left( \frac{|q(z)|}{\|q\|_E}\right)^{1/n} \geq e^{g_\Omega(z,\infty)} \left( \frac{\operatorname{cap}(E)}{\delta_n(E)} \right)^{\tau_\Omega(z,\infty)} \qquad (z \in \Omega).$$

\end{theorem}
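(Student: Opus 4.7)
My plan is to derive both inequalities from the single auxiliary function
$$v(z) := \tfrac{1}{n}\log|q(z)| - g_\Omega(z,\infty),$$
viewed on $\Omega$. For the first (Bernstein--Walsh) inequality, I would note that $v$ is subharmonic on $\Omega$: $\log|q|$ is subharmonic on $\mathbb{C}$, $g_\Omega(\cdot,\infty)$ is harmonic on $\Omega\setminus\{\infty\}$, and because $\deg q = n$ the $\log|z|$-growth at $\infty$ cancels, so $v$ extends to be bounded above at $\infty$. At every regular boundary point $\zeta\in\partial E$ we have $g_\Omega(z,\infty)\to 0$ and $\tfrac{1}{n}\log|q(z)|\leq \tfrac{1}{n}\log\|q\|_E$, and since the irregular points form a polar set the extended maximum principle yields $v\leq\tfrac{1}{n}\log\|q\|_E$ throughout $\Omega$. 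Exponentiating is the first inequality.

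For the Fekete case, the zeros of $q$ lie in $\partial E\subset\mathbb{C}\setminus\Omega$, so $v$ is actually harmonic on $\Omega$. Using Frostman's theorem to write $g_\Omega(z,\infty) = \log|z|-\log\operatorname{cap}(E)+O(1/|z|)$ and the monic normalization $|q(z)|=|z|^n(1+O(1/|z|))$ near $\infty$, I get $v(\infty) = \log\operatorname{cap}(E)$. Now put
$$H(z) := \tfrac{1}{n}\log\|q\|_E - v(z),$$
a harmonic function on $\Omega$ which is nonnegative by the first inequality and satisfies $H(\infty) = \tfrac{1}{n}\log\|q\|_E - \log\operatorname{cap}(E)\geq 0$ by Theorem~\ref{thmFek}. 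If $H(\infty)=0$, the minimum principle forces $H\equiv 0$ and the second inequality becomes an equality; otherwise $H>0$ on $\Omega$ and Harnack's inequality applied to the pair $z,\infty$ gives
$$H(z) \;\leq\; \tau_\Omega(z,\infty)\,H(\infty).$$

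To finish, I would invoke the Fekete bound $\|q\|_E\leq \delta_n(E)^n$ from Theorem~\ref{thmFek}, so that $H(\infty)\leq \log(\delta_n(E)/\operatorname{cap}(E))$, and rearrange the Harnack estimate into
$$\tfrac{1}{n}\log|q(z)| - g_\Omega(z,\infty) \;\geq\; \tfrac{1}{n}\log\|q\|_E - \tau_\Omega(z,\infty)\log\!\bigl(\delta_n(E)/\operatorname{cap}(E)\bigr).$$
Moving $\|q\|_E$ to the left and exponentiating delivers the stated lower bound. The only real care needed is in the first step at irregular boundary points (standard, via the extended maximum principle) and in the borderline case $H(\infty)=0$; once those are handled, the argument is essentially algebraic and I do not anticipate a serious obstacle.
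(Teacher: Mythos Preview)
Your proposal is correct and follows essentially the same route as the paper: the paper works with $u(z)=\frac{1}{n}\log|q(z)|-\frac{1}{n}\log\|q\|_E-g_\Omega(z,\infty)$, which is your $v$ shifted by a constant, and your $H$ is exactly the paper's $-u$; the maximum principle gives the first inequality and Harnack applied to $-u$ together with Theorem~\ref{thmFek} gives the second. Your extra care with irregular boundary points and the borderline case $H(\infty)=0$ is fine but not strictly necessary, since $g_\Omega>0$ on $\Omega$ already forces $\limsup_{z\to\zeta}u(z)\le 0$ at \emph{every} $\zeta\in\partial\Omega$, and the Harnack estimate is trivially an equality when $-u\equiv 0$.
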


\begin{proof}
To prove the first statement, assume without loss of generality that $q$ is monic, and define
$$u(z):= \frac{1}{n} \log{|q(z)|} - \frac{1}{n} \log{\|q\|_E} - g_\Omega(z,\infty) \qquad (z \in \Omega \setminus \{\infty\}).$$
Then $u$ is subharmonic on $\Omega \setminus \{\infty\}$. Also,
$$u(z)=\log{|z|} - \frac{1}{n} \log{\|q\|_E} - \log{|z|} + \log{\operatorname{cap}(E)} + o(1)$$
as $z \to \infty$, and therefore setting
$$u(\infty) := \log{\operatorname{cap}(E)} - \frac{1}{n} \log{\|q\|_E}$$
makes $u$ subharmonic on $\Omega$. Now, since $\partial \Omega \subset E$, we have
$$\limsup_{z \to \zeta} u(z) \leq 0$$
for all $\zeta \in \partial \Omega$, and thus $u \leq 0$ on $\Omega$ by the maximum principle for subharmonic functions. Note that $u(\infty) \leq 0$ by Theorem \ref{thmFek}. This implies the result.

To prove the second statement, note that if $q$ is a Fekete polynomial for $E$, then in particular all its zeros lie in $E$, so that $u$ is actually harmonic on $\Omega$. Also, from the first part of the proof, we have $u \leq 0$ on $\Omega$. Therefore, applying the definition of Harnack distance to $-u$, we get
$$u(z) \geq \tau_\Omega(z,\infty)u(\infty) \qquad (z \in \Omega).$$
Now, by Theorem \ref{thmFek},
$$u(\infty) = \log{\operatorname{cap}(E)} - \frac{1}{n} \log{\|q\|_E} \geq \log{\operatorname{cap}(E)} - \log{\delta_n(E)}.$$
Combining the last two inequalities yields the desired conclusion.

\end{proof}

\begin{corollary}
\label{CorBern}
For $z \in \Omega$, we have
$$|z| \leq R(E) e^{g_\Omega(z,\infty)},$$
where $R(E):= \sup_{w \in E} |w|$.
\end{corollary}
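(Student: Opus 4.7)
My plan is to deduce the bound directly from a maximum principle comparison between the Green's functions of $\Omega$ and of a larger exterior disk.

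Set $R = R(E)$ and let $\Omega_R := \RiemannSphere \setminus \overline{\UnitDisk}(0,R)$. Since $E \subset \overline{\UnitDisk}(0,R)$, we have $\Omega_R \subset \Omega$. The Green's function of $\Omega_R$ with pole at infinity is explicit, namely $g_{\Omega_R}(z,\infty) = \log(|z|/R)$, so the desired inequality is precisely $g_{\Omega_R}(z,\infty) \leq g_\Omega(z,\infty)$ on $\Omega_R$, plus a triviality for $z \in \Omega$ with $|z| \leq R$.

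The first step is to consider the difference $h(z) := g_\Omega(z,\infty) - g_{\Omega_R}(z,\infty)$ on $\Omega_R$. Both functions have the singularity $\log|z| + O(1)$ at $\infty$, so the difference extends to a function harmonic on all of $\Omega_R$ (including at $\infty$). On $\partial \Omega_R = \{|z| = R\}$, $g_{\Omega_R}$ vanishes while $g_\Omega \geq 0$, so $\liminf_{z \to \zeta} h(z) \geq 0$ for every boundary point. By the minimum principle for harmonic functions on $\Omega_R$, we conclude $h \geq 0$ on $\Omega_R$, i.e.
$$g_\Omega(z,\infty) \geq \log\frac{|z|}{R} \qquad (z \in \Omega_R),$$
which rearranges to $|z| \leq R\, e^{g_\Omega(z,\infty)}$.

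For the remaining case $z \in \Omega$ with $|z| \leq R$, the inequality is immediate because $g_\Omega(\cdot, \infty) \geq 0$ on $\Omega$, so $R\, e^{g_\Omega(z,\infty)} \geq R \geq |z|$. Combining the two cases gives the corollary.

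There is no real obstacle here; the only point requiring a moment of care is justifying that $h$ extends harmonically across $\infty$, which is standard from the matching logarithmic singularities of the two Green's functions.
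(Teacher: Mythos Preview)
Your argument is correct. It is essentially the monotonicity principle for Green's functions: since $\Omega_R \subset \Omega$, one has $g_{\Omega_R}(\cdot,\infty) \leq g_{\Omega}(\cdot,\infty)$ on $\Omega_R$, and substituting the explicit formula $g_{\Omega_R}(z,\infty)=\log(|z|/R)$ gives the inequality. The boundary check and the removable singularity at $\infty$ are handled correctly, and the trivial case $|z|\le R$ is dealt with cleanly.

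The paper takes a different route: it simply applies the first inequality of Bernstein's Lemma (Theorem~\ref{thmBernstein}) to the polynomial $q(z)=z^n$, for which $\|q\|_E=R(E)^n$, and reads off $|z|/R(E)\le e^{g_\Omega(z,\infty)}$ in one line. The two proofs are close in spirit---Bernstein's Lemma is itself a maximum-principle comparison---but yours bypasses the polynomial machinery entirely and appeals directly to the Green's function comparison, which is arguably more elementary and self-contained. The paper's version, on the other hand, is shorter in context because Theorem~\ref{thmBernstein} has just been proved and the corollary is presented as an immediate specialization.
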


\begin{proof}
This follows from the first part of Theorem \ref{thmBernstein} applied to $q(z):=z^n$.

\end{proof}

The following result shows that Fekete points are equidistributed with respect to the harmonic measure and therefore can be used as zeros of the approximating polynomials $q_n$ of Theorem \ref{thmExplicit}.

\begin{proposition}
\label{weakconvergence}

Let $E$ be as before and for each $n \geq 2$, let
$$\mu_n:= \frac{1}{n} \sum_{j=1}^n \delta_{w_j^n},$$
where $w_1^n, w_2^n, \dots, w_n^n$ is a Fekete $n$-tuple for $E$ and $\delta_{w_j^n}$ is the unit point mass at the point $w_j^n$. Then $\mu_n \to \nu$ $\operatorname{weak}^*$, where $\nu$ is the equilibrium measure for $E$.
\end{proposition}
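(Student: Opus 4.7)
The plan is the standard energy-maximization argument: exhibit every weak-$*$ subsequential limit of $(\mu_n)$ as an energy-maximizing probability measure on $E$, and then invoke uniqueness of the equilibrium measure.

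First, since $E$ is compact, $\mathcal{P}(E)$ is weak-$*$ sequentially compact. Any subsequence of $(\mu_n)$ therefore has a further subsequence converging weak-$*$ to some $\mu \in \mathcal{P}(E)$; it suffices to show that every such subsequential limit equals $\nu$, for then the whole sequence converges to $\nu$. By uniqueness of the equilibrium measure and since $I(\nu)=\log\operatorname{cap}(E)$ is the supremum of $I$ over $\mathcal{P}(E)$, it is enough to prove the inequality $I(\mu)\geq I(\nu)$.

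The main obstacle is that one cannot integrate the log-kernel against $\mu_n\otimes\mu_n$ directly because of the diagonal singularity: $I(\mu_n)=-\infty$. I would bypass this by truncation. For $M>0$ let $k_M(z,w):=\max(\log|z-w|,-M)$, a continuous symmetric kernel on $\mathbb{C}\times\mathbb{C}$, and set $I_M(\mu'):=\iint k_M(z,w)\,d\mu'(z)\,d\mu'(w)$. Using the Fekete property
$$\sum_{j\neq k}\log|w_j^n-w_k^n|=n(n-1)\log\delta_n(E),$$
and separating the $n$ diagonal contributions (each equal to $-M$) from the off-diagonal ones (each bounded below by $\log|w_j^n-w_k^n|$), I would obtain
$$I_M(\mu_n)\;\geq\;\frac{n-1}{n}\log\delta_n(E)\;-\;\frac{M}{n}.$$
Now take $n=n_k\to\infty$ along the chosen subsequence. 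Theorem \ref{thmFekSze} gives $\log\delta_n(E)\to\log\operatorname{cap}(E)=I(\nu)$, while weak-$*$ convergence $\mu_{n_k}\to\mu$ together with continuity of $k_M$ on the compact set $E\times E$ yields $I_M(\mu_{n_k})\to I_M(\mu)$. Hence $I_M(\mu)\geq I(\nu)$ for every $M>0$.

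Finally, I would remove the truncation. The kernels $k_M$ decrease monotonically to $\log|z-w|$ as $M\to\infty$ and are bounded above by $\log\operatorname{diam}(E)$ on $E\times E$, so monotone convergence (applied to the nonnegative functions $\log\operatorname{diam}(E)-k_M$) gives $I_M(\mu)\downarrow I(\mu)$. Combining with the previous step yields $I(\mu)\geq I(\nu)$, hence $I(\mu)=I(\nu)$, hence $\mu=\nu$ by the uniqueness clause following the definition of equilibrium measure. The hard part of the argument is really the truncation step, which must be carried out carefully so that the diagonal contribution to $I_M(\mu_n)$ vanishes in the limit while the off-diagonal sum still captures the full Fekete maximum.
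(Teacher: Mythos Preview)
Your proof is correct and is essentially the same argument as the paper's: truncate the logarithmic kernel, use the Fekete property together with the Fekete--Szeg\H{o} theorem to bound $I_M(\mu_n)$ from below by something tending to $I(\nu)$, pass to a weak-$*$ subsequential limit, and then remove the truncation by monotone convergence to conclude via uniqueness of the equilibrium measure. The only point you leave implicit that the paper makes explicit is the passage from $\mu_{n_k}\to\mu$ to $\mu_{n_k}\times\mu_{n_k}\to\mu\times\mu$ weak-$*$ (which the paper justifies via Stone--Weierstrass); conversely, you are slightly more careful than the paper in spelling out the compactness reduction and the direction of the monotone convergence.
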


\begin{proof}
First, recall that $\nu$ and $\mu_n$ for $n \geq 2$ are all supported on $\partial E$.

Now, let $\mu$ be any $\operatorname{weak}^*$-limit of the sequence of measures $(\mu_n)$. Then by the monotone convergence theorem, we have
$$I(\mu) = \lim_{m \to \infty} \int_{\partial E} \int_{\partial E} \max(\log{|z-w|},-m) d\mu(z) d\mu(w).$$
Now, by the Stone-Weierstrass theorem, we have that $\mu_n \times \mu_n \to \mu \times \mu$ $\operatorname{weak}^*$ on $\partial E \times \partial E$, so the above equality becomes
\begin{eqnarray*}
I(\mu) &=& \lim_{m \to \infty} \lim_{n \to \infty} \int_{\partial E} \int_{\partial E} \max(\log{|z-w|},-m) d\mu_n(z) d\mu_n(w)\\
&=& \lim_{m \to \infty} \lim_{n \to \infty} \frac{1}{n^2} \sum_{j=1}^n \sum_{k=1}^n \max(\log{|w_j^n-w_k^n|},-m)\\
&\geq& \lim_{m \to \infty} \lim_{n \to \infty} \left(\frac{-m}{n} + \frac{2}{n^2} \sum_{j < k} \log{|w_j^n-w_k^n|}\right)\\
&=& \lim_{n \to \infty} \frac{n-1}{n} \log{\delta_n(E)}.
\end{eqnarray*}
This last limit is equal to $\log \operatorname{cap}(E) = I(\nu)$, by Theorem \ref{thmFekSze}. The maximality and uniqueness of the equilibrium measure $\nu$ then implies that $\mu=\nu$, as required.

\end{proof}

Combining this with Lemma \ref{lemmaTom}, we get that if $w_1^n, w_2^n, \dots, w_n^n$ is a Fekete $n$-tuple for $E$, then
$$\frac{1}{n} \sum_{j=1}^n \log{|z-w_j^n|} \to p_\nu(z)$$
as $n \to \infty$ for $z \in \Omega$, and the convergence is uniform on compact subsets. The following result due to Pritsker yields a precise rate of convergence, under the additional assumption that $E$ is uniformly perfect.

\begin{theorem}[Pritsker]
\label{thmPRI}
Let $E$, $w_1^n, w_2^n,\dots,w_n^n$, $n \geq 2$, be as above, and assume in addition that $E$ is uniformly perfect. Then
\begin{equation}
\label{eqprit1}
\left| \frac{1}{n} \sum_{j=1}^n \log{|z-w_j^n|} - p_\nu(z) \right| \leq C \frac{\log n}{\sqrt{n}} \qquad \left(z \in \Omega, \, g_\Omega(z,\infty)>\frac{1}{n}\right)
\end{equation}
where $C=C(E)$ is a constant depending only on $E$.

\end{theorem}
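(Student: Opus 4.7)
The plan is to extract the desired bound directly from Bernstein's Lemma (Theorem \ref{thmBernstein}) applied to the Fekete polynomial $q_n(z) := \prod_{j=1}^{n}(z-w_j^n)$, and then absorb everything into two quantitative estimates that are known to hold under the uniform perfection hypothesis. Taking logarithms in both inequalities of Theorem \ref{thmBernstein} and rearranging, we have
\begin{equation*}
g_\Omega(z,\infty) + \tau_\Omega(z,\infty)\log\frac{\operatorname{cap}(E)}{\delta_n(E)} \leq \frac{1}{n}\log|q_n(z)| - \frac{1}{n}\log\|q_n\|_E \leq g_\Omega(z,\infty)
\end{equation*}
for every $z \in \Omega$. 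Combining these with the sandwich $\operatorname{cap}(E) \leq \|q_n\|_E^{1/n} \leq \delta_n(E)$ furnished by Theorem \ref{thmFek}, and with Frostman's identity $p_\nu(z) = g_\Omega(z,\infty) + \log\operatorname{cap}(E)$ from Theorem \ref{TheoFrostman}(iii), both error terms collapse into powers of $\delta_n(E)/\operatorname{cap}(E)$ and one obtains the master two-sided inequality
\begin{equation*}
-\tau_\Omega(z,\infty)\log\frac{\delta_n(E)}{\operatorname{cap}(E)} \leq \frac{1}{n}\sum_{j=1}^n \log|z-w_j^n| - p_\nu(z) \leq \log\frac{\delta_n(E)}{\operatorname{cap}(E)}.
\end{equation*}
Since $\tau_\Omega(z,\infty) \geq 1$, the absolute value on the left-hand side of (\ref{eqprit1}) is at most $\tau_\Omega(z,\infty)\,\log(\delta_n(E)/\operatorname{cap}(E))$.

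At this point the proof is reduced to bounding the two factors separately. The first is a quantitative Fekete--Szeg\"{o} estimate of the form $\log(\delta_n(E)/\operatorname{cap}(E)) \leq C_1 \log n / n$, which is known for uniformly perfect $E$; its proof uses the fact that uniform perfection forces every boundary point to be regular and the Green's function $g_\Omega(\cdot,\infty)$ to be H\"older continuous on $\overline{\Omega}$, and then compares Fekete configurations with regularly spaced approximations built from $g_\Omega$. The second is a Harnack-distance bound of the form $\tau_\Omega(z,\infty) \leq C_2 / \sqrt{g_\Omega(z,\infty)}$ valid on the set where $g_\Omega(z,\infty)$ is small; one obtains this by chaining Harnack's inequality along a sequence of disks in $\Omega$ joining $z$ to $\infty$ at hyperbolic scale, using once again the H\"older regularity of the Green's function that uniform perfection provides. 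Under the hypothesis $g_\Omega(z,\infty) > 1/n$, this second estimate gives $\tau_\Omega(z,\infty) \leq C_2 \sqrt{n}$, and multiplying the two yields
\begin{equation*}
\tau_\Omega(z,\infty)\log\frac{\delta_n(E)}{\operatorname{cap}(E)} \leq C \frac{\log n}{\sqrt{n}},
\end{equation*}
which is the desired bound.

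The main obstacle is the derivation of the two quantitative estimates in the previous paragraph; each genuinely requires the uniform perfection of $E$, and without it neither the Fekete--Szeg\"o rate nor the Harnack bound can be made explicit in $n$. Once those two inputs are in hand, the remainder of the proof is the bookkeeping of Bernstein's Lemma and Frostman's theorem outlined above, and the uniformity in $z$ on $\{g_\Omega(z,\infty) > 1/n\}$ follows automatically from the uniformity in $z$ of the Harnack bound.
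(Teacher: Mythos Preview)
The paper does not prove this theorem; it is quoted from Pritsker's paper \cite{PRI} (the line ``See \cite[Theorem 2.2]{PRI} and its corollary'' covers both Theorem~\ref{thmPRI} and Theorem~\ref{thmPRI2}). So there is no in-paper argument to compare to, and your proposal is effectively an attempt to supply an independent proof. Unfortunately, the approach via Bernstein's Lemma cannot deliver the stated rate, and both quantitative inputs you invoke are problematic.

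First, the Harnack bound $\tau_\Omega(z,\infty)\leq C_2/\sqrt{g_\Omega(z,\infty)}$ is false, already for the unit disk. If $E=\overline{\mathbb{D}}$ then $\Omega$ is conformally the disk via $z\mapsto 1/z$, and the Harnack distance from $z$ (with $|z|=R>1$) to $\infty$ equals $(R+1)/(R-1)$, while $g_\Omega(z,\infty)=\log R\sim R-1$. Thus $\tau_\Omega(z,\infty)\sim 2/g_\Omega(z,\infty)$, not $1/\sqrt{g_\Omega(z,\infty)}$. Uniform perfection does not improve this: the disk is as uniformly perfect as possible. With the correct scaling $\tau_\Omega(z,\infty)\lesssim 1/g_\Omega(z,\infty)$, the hypothesis $g_\Omega(z,\infty)>1/n$ yields only $\tau_\Omega(z,\infty)\lesssim n$.

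Second, the Fekete--Szeg\H{o} rate $\log(\delta_n(E)/\operatorname{cap}(E))\leq C_1\log n/n$ is stronger than Theorem~\ref{thmPRI2}, which gives $\log n/\sqrt{n}$ and is itself presented in \cite{PRI} as a \emph{corollary} of Theorem~\ref{thmPRI}; invoking it here would be circular. Even granting the optimistic $\log n/n$ rate (true for the disk, not known in general for uniformly perfect sets), combining it with the correct Harnack estimate gives
\[
\tau_\Omega(z,\infty)\,\log\frac{\delta_n(E)}{\operatorname{cap}(E)} \;\lesssim\; n\cdot\frac{\log n}{n} \;=\; \log n,
\]
which does not even tend to zero. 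The Bernstein route is therefore too coarse: Pritsker's actual argument bypasses Harnack entirely and proceeds through discrepancy/energy estimates for the counting measures $\mu_n$, which is where the $\sqrt{n}$ genuinely arises.
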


\begin{definition}
\label{defPRI}
We define the \textit{Pritsker constant} of $E$ to be the smallest constant $C(E)$ for which (\ref{eqprit1}) holds.
\end{definition}

We shall also need the following result, also due to Pritsker, on the rate of convergence of the $n$-th diameter to the logarithmic capacity.

\begin{theorem}[Pritsker]
\label{thmPRI2}
Under the assumptions of Theorem \ref{thmPRI}, we have
$$\log{\frac{\delta_n(E)}{\operatorname{cap}(E)}} \leq C_2 \frac{\log{n}}{\sqrt{n}},$$
where $C_2=C_2(E)$ is a constant depending only on $E$.

\end{theorem}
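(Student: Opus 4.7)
The strategy is to derive the estimate from Theorem \ref{thmPRI} by means of a Cauchy integral bound at each Fekete point. Let $q_n(z) := \prod_{j=1}^n(z - w_j^n)$ be a Fekete polynomial of degree $n$ for $E$. A direct computation gives $q_n'(w_k^n) = \prod_{j \neq k}(w_k^n - w_j^n)$, so
$$\prod_{k=1}^n |q_n'(w_k^n)| = \prod_{j \neq k} |w_j^n - w_k^n| = \delta_n(E)^{n(n-1)}.$$
Hence it suffices to bound each $|q_n'(w_k^n)|$ from above.

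First, I would extend Pritsker's estimate (Theorem \ref{thmPRI}) from $\{g_\Omega(\cdot,\infty) > 1/n\}$ into the interior. By Frostman's theorem, $p_\nu(z) = \log \operatorname{cap}(E) + g_\Omega(z,\infty)$ on $\Omega$, so Theorem \ref{thmPRI} gives
$$\frac{1}{n}\log|q_n(z)| \leq \log\operatorname{cap}(E) + \frac{1}{n} + C\,\frac{\log n}{\sqrt n}$$
for $z$ on the level set $\{g_\Omega(\cdot,\infty) = 1/n\}$. Since $\log|q_n|$ is subharmonic on $\ComplexPlane$, the maximum principle applied to the compact set $E_{1/n}$ propagates this bound throughout, yielding
$$\|q_n\|_{E_{1/n}}^{1/n} \leq \operatorname{cap}(E) \exp\bigl(C_1 \log n/\sqrt n\bigr).$$

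Second, for each Fekete point $w_k^n \in \partial E$ I would apply the Cauchy integral formula on a circle $|z - w_k^n| = r_n$ contained in $E_{1/n}$ to obtain $|q_n'(w_k^n)| \leq \|q_n\|_{E_{1/n}}/r_n$. The key is to choose $r_n$ only polynomially small in $n$, uniformly in $k$. Because $E$ is uniformly perfect, a classical result from potential theory asserts that $g_\Omega(\cdot,\infty)$ is H\"older continuous near $E$ with some exponent $\alpha = \alpha(E) > 0$; hence $\operatorname{dist}(E, \partial E_{1/n}) \geq c\, n^{-1/\alpha}$ and we may take $r_n := c\, n^{-1/\alpha}$, uniformly in $k$. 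Combining the resulting bound with the identity above yields $\delta_n(E)^{n(n-1)} \leq (\|q_n\|_{E_{1/n}}/r_n)^n$, and taking $(n-1)$-th roots and logarithms gives $\log(\delta_n(E)/\operatorname{cap}(E)) \leq C_2 \log n/\sqrt n$: the dominant contribution is $(\log \|q_n\|_{E_{1/n}})/(n-1) = O(\log n/\sqrt n)$, while $-\log r_n /(n-1) = O(\log n/n)$ is absorbed.

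I expect the main obstacle to be the last radius estimate: ensuring that a disk of radius polynomial in $1/n$ fits inside $E_{1/n}$ around each Fekete point forces a quantitative regularity property of $g_\Omega(\cdot,\infty)$, which is precisely where the uniform perfectness hypothesis enters non-trivially. Without it the Cauchy bound degrades and the target rate $\log n/\sqrt n$ is lost.
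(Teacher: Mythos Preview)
The paper does not give its own proof of this statement; it simply refers the reader to \cite[Theorem~2.2]{PRI} and its corollary. So there is no argument in the paper to compare yours against.

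That said, your proposal is a sound independent derivation. The identity $\prod_k |q_n'(w_k^n)| = \delta_n(E)^{n(n-1)}$ is correct, and combining the Pritsker bound (Theorem~\ref{thmPRI}) on the level set $\{g_\Omega=1/n\}$ with the subharmonic maximum principle does yield $\|q_n\|_{E_{1/n}}^{1/n} \le \operatorname{cap}(E)\exp(C_1\log n/\sqrt n)$. The H\"older continuity of $g_\Omega(\cdot,\infty)$ near $E$ under uniform perfectness is a standard fact, so the choice $r_n=c\,n^{-1/\alpha}$ is legitimate and gives $\mathbb{D}(w_k^n,r_n)\subset E_{1/n}$ for each $k$. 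One small imprecision in your final bookkeeping: the quantity $(\log\|q_n\|_{E_{1/n}})/(n-1)$ is not itself $O(\log n/\sqrt n)$ but $\log\operatorname{cap}(E)+O(\log n/\sqrt n)$; once you move the $\log\operatorname{cap}(E)$ to the left-hand side the residual $\frac{1}{n-1}\log\operatorname{cap}(E)$ is $O(1/n)$ and the estimate goes through exactly as you claim.
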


See \cite[Theorem 2.2]{PRI} and its corollary.

We now have everything that we need in order to prove a more explicit version of Theorem \ref{thmExplicit} based on Fekete points. As a consequence, we shall obtain a proof of Theorem \ref{mainthm2} on the rate of approximation by Julia sets.

As before, let $E \subset \mathbb{C}$ be a compact set with connected complement $\Omega$. As in Section \ref{sec3}, assume without loss of generality that $0$ is an interior point of $E$, so that in particular $\operatorname{cap}(E)>0$ and the Green's function $g_\Omega(\cdot,\infty)$ is well-defined.

We now introduce two positive geometric quantities associated with the set $E$, which we call the \textit{inner radius} and \textit{outer radius} of $E$. These are defined by
$$r(E):= \operatorname{dist}(0,\partial E)$$
and
$$R(E):= \sup_{w \in E} |w|$$
respectively. Note that $\overline{\mathbb{D}}(0,r(E)) \subset E \subset \overline{\mathbb{D}}(0,R(E)) $.

Finally, let us assume as well that $E$ is uniformly perfect. Recall that in this case, the Pritsker constant of $E$ is defined to be the smallest constant $C(E)$ such that

$$\left| \frac{1}{n} \sum_{j=1}^n \log{|z-w_j^n|} - p_\nu(z) \right| \leq C(E) \frac{\log n}{\sqrt{n}} \qquad \left(z \in \Omega, \, g_\Omega(z,\infty)>\frac{1}{n}\right)
$$
where $w_1^n,w_2^n,\dots,w_n^n$ are the points of a Fekete $n$-tuple for $E$ and $\nu$ is the equilibrium measure for $E$.

\begin{theorem}
\label{thmExplicit2}
Let $s>0$, and suppose that $n$ is sufficiently large so that the following conditions hold :

\begin{enumerate}[\rm(i)]
\item $\displaystyle \frac{1}{n} \leq  s$
\item $\displaystyle C(E) \frac{\log n}{\sqrt{n}} \leq \frac{s}{4}$
\item $\displaystyle e^{ns/4} \geq \frac{R(E)e^s}{r(E)}$
\item $\displaystyle \left(\frac{R(E)e^s}{r(E)}\right)^{1/n} \frac{\delta_ n(E)}{\operatorname{cap}(E)} \leq e^{s/2}$.
\end{enumerate}
Then the polynomial
$$P_n(z)=P_{n,s}(z):=z\frac{e^{-ns/2}}{\operatorname{cap}(E)^n} \prod_{j=1}^n (z-w_j^n)$$
satisfies
$$E \subset \operatorname{int} (\mathcal{K}(P_n)) \subset E_s,$$
where $E_s:= E \cup \{z \in \Omega : g_\Omega(z,\infty) \leq s\}$.

\end{theorem}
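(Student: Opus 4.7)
The plan is to mirror the structure of the proof of Theorem \ref{thmExplicit}, specializing to Fekete points and taking $U = E_s$. By Corollary \ref{CorBern}, $E_s \subset \overline{\mathbb{D}}(0, R(E)e^s)$, so one can think of $R(E)e^s$ and $r(E)$ as the effective outer and inner radii. Each of the four numbered conditions in Theorem \ref{thmExplicit2} should turn out to be precisely what is needed to upgrade a qualitative input (Theorem \ref{thmFek}, Frostman's theorem, or Pritsker's estimate) into the corresponding quantitative hypothesis of Theorem \ref{thmExplicit}.

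For the inclusion $E \subset \operatorname{int}(\mathcal{K}(P_n))$, I would bound $|P_n(z)|$ on $E$. For any $z \in E$ we have $|z| \leq R(E)$ and, by Theorem \ref{thmFek}, $\|q_n\|_E \leq \delta_n(E)^n$, so
$$|P_n(z)| \leq R(E)\, e^{-ns/2}\, \frac{\delta_n(E)^n}{\operatorname{cap}(E)^n}.$$
Condition (iv), raised to the $n$-th power, rearranges to $R(E)\,\delta_n(E)^n/\operatorname{cap}(E)^n \leq e^{ns/2}\, r(E)\, e^{-s}$, and substituting gives $|P_n(z)| \leq r(E)\,e^{-s} < r(E)$. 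Therefore $P_n(E) \subset \mathbb{D}(0,r(E)) \subset \operatorname{int}(E)$, so by compactness some open neighborhood of $E$ is mapped into $\operatorname{int}(E)$, placing $E$ safely inside $\operatorname{int}(\mathcal{K}(P_n))$.

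For the inclusion $\mathcal{K}(P_n) \subset E_s$, I would fix $z \notin E_s$; then $z \in \Omega$ with $g_\Omega(z,\infty) > s$, and condition (i) ensures $g_\Omega(z,\infty) > 1/n$, so Pritsker's estimate in condition (ii) applies, giving $\frac{1}{n}\log|q_n(z)| \geq p_\nu(z) - s/4$. Using Theorem \ref{TheoFrostman} to rewrite $p_\nu(z) = g_\Omega(z,\infty) + \log\operatorname{cap}(E)$, I get
$$|P_n(z)| \geq |z|\, \exp\!\bigl(n\bigl(g_\Omega(z,\infty) - \tfrac{3s}{4}\bigr)\bigr) > |z|\, e^{ns/4}.$$
Since $\overline{\mathbb{D}}(0, r(E)) \subset E \subset E_s$, the assumption $z \notin E_s$ forces $|z| \geq r(E)$, and condition (iii) then yields $|P_n(z)| > r(E)\, e^{ns/4} \geq R(E)e^s$. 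By Corollary \ref{CorBern} this places $P_n(z)$ outside $E_s$, and applying the same argument inductively to $P_n^k(z)$ gives $|P_n^k(z)| > r(E)\, e^{kns/4} \to \infty$, so $z \notin \mathcal{K}(P_n)$.

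No single step is delicate once the right ingredients are identified; the main care is in the accounting. The one place to be cautious is making sure condition (iii) is applied with the correct lower bound $|z| \geq r(E)$ (justified by $\overline{\mathbb{D}}(0,r(E)) \subset E \subset E_s$) and that the outer-radius bound $R(E)e^s$ on $E_s$ (from Corollary \ref{CorBern}) is used to certify escape after one iteration, so that the escape condition propagates to all iterates.
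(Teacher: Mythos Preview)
Your proof is correct and follows essentially the same approach as the paper's: both use Theorem~\ref{thmFek} with condition (iv) to show $P_n(E) \subset \mathbb{D}(0,r(E))$, and Pritsker's estimate (via conditions (i)--(ii)) together with Corollary~\ref{CorBern} and condition (iii) to show that points outside $E_s$ escape to $\infty$. The only cosmetic difference is that the paper factors $P_n(z) = z\,Q_n(z)$ and estimates $|Q_n|$ separately, while you work directly with $|P_n|$; your escape inequality $|P_n(z)| > |z|\,e^{ns/4}$ is in fact slightly sharper than the paper's $|P_n(z)| > |z|\,\frac{R(E)e^s}{r(E)}$.
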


In particular, the set $E$ is totally approximable by the filled Julia sets of the polynomials $P_n$'s, since the compact sets $E_s$ shrink to $E$ as $s$ decreases to $0$.

\begin{proof}

The idea is very similar to the proof of Theorem \ref{thmExplicit}.

Let $\Omega_s := \RiemannSphere \setminus E_s = \{z \in \Omega : g_{\Omega}(z,\infty)>s\}$. Note that $$\RiemannSphere \setminus \overline{\mathbb{D}}(0,R(E)e^s) \subset \Omega_s,$$ by Corollary \ref{CorBern}.

Now, let $z \in \Omega_s$. Then $g(z,\infty) > 1/n$ by (i), so that by Theorem \ref{thmPRI},
$$\left| \frac{1}{n} \sum_{j=1}^n \log{|z-w_j^n|} - p_\nu(z) \right| \leq C(E) \frac{\log n}{\sqrt{n}} \leq \frac{s}{4},$$
where we used condition (ii). By Theorem \ref{TheoFrostman}, we get
$$g_\Omega(z,\infty) -\frac{s}{4} \leq \frac{1}{n} \sum_{j=1}^n \log{|z-w_j^n|} - I(\nu)$$
and thus
$$\exp \left(n \left(g_\Omega(z,\infty) - \frac{s}{2} - \frac{s}{4} \right) \right) \leq |Q_n(z)|,$$
where
$$Q_n(z):= \frac{e^{-ns/2}}{\operatorname{cap}(E)^n} \prod_{j=1}^n (z-w_j^n) = \frac{P_n(z)}{z}.$$
Since $g_\Omega(z,\infty) > s$, we obtain
\begin{equation}
\label{eq}
\frac{R(E)e^s}{r(E)} \leq e^{ns/4} < |Q_n(z)|,
\end{equation}
by (iii), and this holds for all $z \in \Omega_s$.

On the other hand, if $z \in E$, then by Theorem \ref{thmFek} and (iv),
\begin{equation}
\label{eqq}
|Q_n(z)| \leq e^{-ns/2}\left(\frac{\delta_n(E)}{\operatorname{cap}(E)}\right)^n \leq e^{-ns/2} e^{ns/2} \frac{r(E)}{R(E)e^s} = \frac{r(E)}{R(E)e^s}.
\end{equation}
Now, for $z \in \Omega_s$, we have, by (\ref{eq}),
$$|P_n(z)| = |z| |Q_n(z)| > r(E) \frac{R(E)e^s}{r(E)} = R(E)e^s$$
so that $P_n(z) \in \RiemannSphere \setminus \overline{\mathbb{D}}(0,R(E)e^s) \subset \Omega_s$. Moreover, again by (\ref{eq}),
$$|P_n(z)| = |z| |Q_n(z)| > |z| \frac{R(E)e^s}{r(E)},$$
where
$$\frac{R(E)e^s}{r(E)}>1.$$ It follows that the iterates of $P_n$ converge to $\infty$ on $\Omega_s$, so that $\Omega_s \subset \RiemannSphere \setminus \mathcal{K}(P_n)$ and thus $\mathcal{K}(P_n) \subset E_s$.

Finally, for $z \in E$, we have, by (\ref{eqq}),

$$|P_n(z)| = |z| |Q_n(z)| < R(E)e^s \frac{r(E)}{R(E)e^s} = r(E)$$
and thus $P_n(E) \subset \mathbb{D}(0,r(E)) \subset E$. Clearly, this implies that $E \subset \operatorname{int} (\mathcal{K}(P_n))$, which completes the proof of the theorem.

\end{proof}

We can now use Theorem \ref{thmExplicit2} to obtain a precise estimate for the rate of approximation. Recall from the introduction that $s_n(E)$, $n \in \mathbb{N}$, is defined to be the infimum of $s>0$ for which there exists a polynomial $p_n$ of degree $n$ such that
$$E \subset \mathcal{K}(p_n) \subset E_s,$$
where $E_s:= E \cup \{z \in \Omega : g_\Omega(z,\infty) \leq s\}$.

Solving for $s$ in Theorem \ref{thmExplicit2}, one easily obtains the existence of a constant $c'=c'(E)$ such that
$$s_n(E) \leq c' \left( \frac{\log{n}}{\sqrt{n}} + \log{\frac{\delta_n(E)}{\operatorname{cap}(E)}} \right) \qquad (n \geq 1).$$

It follows from this and Theorem \ref{thmPRI2} that there is a constant $c=c(E)$ depending only on $E$ such that
$$s_n(E) \leq c \, \frac{\log{n}}{\sqrt{n}} \qquad (n \geq 1),$$
thereby proving Theorem \ref{mainthm2}.

\section{Hilbert's Lemniscate Theorem}
\label{sec5}

In this section, we give a proof of Theorem \ref{mainthm1} using a classical result of Hilbert on the approximation of planar sets by polynomial lemniscates. We first present a proof of this latter result based on Fekete polynomials.

\begin{theorem}[Hilbert's Lemniscate Theorem \cite{HIL}]
\label{Hilbert}
Let $E \subset \mathbb{C}$ be a compact set with connected complement $\Omega$, and let $U$ be an open neighborhood of $E$. Then there exists a polynomial $Q$ such that
$$|Q(z)| \leq 1 \qquad (z \in E)$$
and
$$|Q(z)|>1 \qquad (z \in \mathbb{C} \setminus U).$$
\end{theorem}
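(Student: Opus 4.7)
The plan is to produce the polynomial $Q$ as a normalized Fekete polynomial of sufficiently high degree. First, I would dispose of the degenerate case: if $\operatorname{cap}(E) = 0$, replace $E$ by a slightly larger compact set $E' \subset U$ with connected complement and positive capacity (e.g.\ adjoin a small closed disk in $U$ together with a thin arc to $E$); since $|Q| \leq 1$ on $E'$ trivially implies $|Q| \leq 1$ on $E$, it suffices to prove the theorem for $E'$. I will therefore assume $\operatorname{cap}(E) > 0$.

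For each $n \geq 2$, let $q_n$ be a Fekete polynomial of degree $n$ for $E$ and set
$$Q_n(z) := \frac{q_n(z)}{\|q_n\|_E}.$$
By construction, $|Q_n(z)| \leq 1$ for every $z \in E$, so the one thing left is to show that $|Q_n| > 1$ on $K := \mathbb{C} \setminus U$ for all sufficiently large $n$. The lower bound in Bernstein's Lemma (Theorem~\ref{thmBernstein}) gives, for $z \in \Omega$,
$$|Q_n(z)|^{1/n} \;\geq\; e^{g_\Omega(z,\infty)}\left(\frac{\operatorname{cap}(E)}{\delta_n(E)}\right)^{\tau_\Omega(z,\infty)},$$
and the Fekete--Szeg\H{o} theorem (Theorem~\ref{thmFekSze}) tells us that $\delta_n(E) \downarrow \operatorname{cap}(E)$, so the second factor approaches $1$ uniformly in $z$ provided $\tau_\Omega(z,\infty)$ is bounded above on $K$.

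To make this uniform I need two estimates on $K$:
$$\alpha := \inf_{z \in K} g_\Omega(z,\infty) > 0 \quad\text{and}\quad \beta := \sup_{z \in K} \tau_\Omega(z,\infty) < \infty.$$
The key observation is that, since $E$ is compact and $E \subset U$, the set $K \cup \{\infty\}$ is a compact subset of the open set $\Omega \subset \widehat{\mathbb{C}}$. The function $g_\Omega(\cdot,\infty)$ is continuous on $\Omega$, strictly positive, and equal to $+\infty$ at $\infty$, so its infimum on $K \cup \{\infty\}$ is attained at a finite point and hence is a positive number $\alpha$. Similarly, the continuity of the Harnack semimetric $\log \tau_\Omega(\cdot,\infty)$ on $\Omega$, together with $\tau_\Omega(\infty,\infty) = 1$, makes $\tau_\Omega(\cdot,\infty)$ bounded on $K \cup \{\infty\}$ by some $\beta \geq 1$. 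Choosing $n$ large enough that $(\operatorname{cap}(E)/\delta_n(E))^\beta \geq e^{-\alpha/2}$, the displayed inequality yields
$$|Q_n(z)|^{1/n} \geq e^{\alpha}\,e^{-\alpha/2} = e^{\alpha/2} > 1 \qquad (z \in K),$$
so $|Q_n(z)| > 1$ on $K$ and $Q := Q_n$ does the job.

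The main obstacle is the uniform control on $\tau_\Omega(\cdot,\infty)$ over $K$, which fails to be compact in $\mathbb{C}$; the trick is to compactify in $\widehat{\mathbb{C}}$, exploiting the fact that $\infty \in \Omega$, so that $K \cup \{\infty\}$ becomes compact inside the domain where both $g_\Omega$ and $\tau_\Omega$ are well-behaved. The zero-capacity reduction is the other minor technicality, but it is soft compared to the analytic content, which is entirely packaged in Bernstein's Lemma and the Fekete--Szeg\H{o} theorem.
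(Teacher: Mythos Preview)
Your proposal is correct and follows essentially the same route as the paper: normalize a Fekete polynomial to get $Q_n = q_n/\|q_n\|_E$, invoke the lower bound in Bernstein's Lemma together with Fekete--Szeg\H{o}, and bound $g_\Omega$ below and $\tau_\Omega$ above on $\RiemannSphere\setminus U$ (your $\alpha,\beta$ are exactly the paper's $A,B$). The only cosmetic differences are that you spell out the zero-capacity reduction and the compactness argument for $\alpha>0$, $\beta<\infty$ in $\RiemannSphere$, which the paper leaves implicit.
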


\begin{proof}
Again, we can assume that $\operatorname{cap}(E)>0$. Define
$$A:= \inf_{z \in \RiemannSphere \setminus U} g_\Omega(z,\infty)$$
and
$$B:= \sup_{z \in \RiemannSphere \setminus U} \tau_\Omega(z,\infty),$$
so that $A>0$ and $B<\infty$. By Theorem \ref{thmBernstein}, if $q$ is a Fekete polynomial for $E$ of degree $n$, then

$$\left( \frac{|q(z)|}{\|q\|_E}\right)^{1/n} \geq e^{A} \left( \frac{\operatorname{cap}(E)}{\delta_n(E)} \right)^{B} \qquad (z \in \mathbb{C} \setminus U).$$
Here we used the fact that $\delta_n(E) \geq \operatorname{cap}(E)$ for all $n$, cf. Theorem \ref{thmFekSze}. Finally, since $\delta_n(E) \to \operatorname{cap}(E)$ as $n \to \infty$, the right-hand side will exceed $1$ for all sufficiently large $n$, and setting $Q:=q/\|q\|_E$ then gives the desired polynomial $Q$.

\end{proof}

We now use Theorem \ref{Hilbert} to give another proof of Theorem \ref{mainthm1}.

\begin{proof}

Again, we can assume without loss of generality that $0$ belongs to $E$ and that it is an interior point.

Let $U$ be a bounded neighborhood of $E$, and let $Q$ be the polynomial as in Hilbert's Theorem \ref{Hilbert}. Dividing $Q$ by a constant slightly larger than one if necessary, we can assume that $\|Q\|_E < 1$.

Let $R,r>0$ such that $\mathbb{D}(0,r) \subset E$ and $U \subset \mathbb{D}(0,R)$. For $k \geq 1$, define the polynomial $P_k(z):=zQ^k(z)$. Here $Q^k$ denotes the $k$-th power of the polynomial $Q$. Then for $k$ sufficiently large, we have
$$P_k(E) \subset \mathbb{D}(0,r) \subset E$$
and
$$P_k\left(\RiemannSphere \setminus U \right) \subset \RiemannSphere \setminus \mathbb{D}(0,R) \subset \RiemannSphere \setminus U,$$
since $P_k \to 0$ uniformly on $E$ and $P_k \to \infty$ uniformly on $\RiemannSphere \setminus U$.

This implies that $E \subset \mathcal{K}(P_k)$. Also, the inequality
$$|P_k(z)| \geq |z| \min_{\RiemannSphere \setminus U}|Q| \qquad (z \in \RiemannSphere \setminus U),$$
where $\displaystyle{\min_{\RiemannSphere \setminus U}|Q|>1}$, shows that the iterates of $P_k$ tend to $\infty$ on $\RiemannSphere \setminus U$, so that $\RiemannSphere \setminus U \subset \RiemannSphere \setminus \mathcal{K}(P_k)$, i.e. $\mathcal{K}(P_k) \subset U$.

Since $U$ can be made arbitrarily close to $E$, the result follows.

\end{proof}

\section{Totally approximable sets}
\label{sec6}

In this section, we prove Theorem \ref{mainthm3}, which states that a nonempty proper subset $S$ of the complex plane $\mathbb{C}$ is totally approximable by polynomial filled Julia sets if and only if $S$ is bounded and $\mathbb{C} \setminus \operatorname{int}(S)$ is connected.

First, we need an approximation result.

\begin{proposition}
\label{TotApproxProp}
Let $S$ be a nonempty proper subset of the complex plane $\mathbb{C}$. If $S$ is bounded and $\mathbb{C} \setminus \operatorname{int}(S)$ is connected, then $S$ is totally approximable by a collection $\{K^\epsilon\}$ of nonempty compact sets with connected complement.

\end{proposition}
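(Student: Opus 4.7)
The plan is to approximate $S$ by a grid-based construction that distinguishes cells lying deep inside $\operatorname{int}(S)$ from cells meeting $\partial S$. Fix $\epsilon>0$ and parameters $0 < \delta_1 \ll \delta < \epsilon/10$, and consider the standard tiling of $\mathbb{C}$ by closed $\delta\times\delta$ squares. Call such a square $Q$ \emph{interior} if $Q \subset \operatorname{int}(S)$, and \emph{boundary} if $Q \cap \overline{S} \neq \emptyset$ but $Q \not\subset \operatorname{int}(S)$. For each boundary square $Q$, let $Q'$ denote the concentric closed square of side $\delta - 2\delta_1$, and define
\[
K^\epsilon := \bigcup_{Q \text{ interior}} Q \;\cup\; \bigcup_{Q \text{ boundary}} Q'.
\]
Since $S$ is bounded, only finitely many tiles meet $\overline{S}$, so $K^\epsilon$ is a nonempty compact set.

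The two Hausdorff bounds are routine. Any point of $K^\epsilon$ lies in a tile meeting $\overline{S}$, hence within $\delta\sqrt{2}$ of $S$ (by density of $S$ in $\overline{S}$); any point of $S$ lies in such a tile, which is either interior (so inside $K^\epsilon$) or boundary (within $\delta_1\sqrt{2}$ of $Q'$). Observe next that every boundary tile in fact meets $\partial S$: otherwise the connected set $Q$ would lie entirely in one of the disjoint open sets $\operatorname{int}(S)$ and $\mathbb{C}\setminus\overline{S}$, and since it meets $\overline{S}$ it would be forced into $\operatorname{int}(S)$, contradicting the definition of boundary. Consequently $\partial Q'$ is within $\delta\sqrt{2}$ of $\partial S$, and each $p \in \partial S$ sits in a boundary tile and so within $\delta\sqrt{2}$ of $\partial Q'$.

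The heart of the matter is connectedness of $\mathbb{C}\setminus K^\epsilon$. Inside each boundary tile the complement of $K^\epsilon$ is a thin square ``moat'' $Q\setminus Q'$ of width $\delta_1$; the shared edge of two adjacent tiles lies in $K^\epsilon$ if and only if at least one of them is interior, since the shrunken squares do not touch the edges of their ambient tiles. So it suffices to show that from every boundary tile $Q$ there is a chain of edge-adjacent non-interior tiles terminating at a tile disjoint from $\overline{S}$. This is where the hypothesis enters: choose $p \in Q \cap \partial S$ and, using that $\mathbb{C}\setminus\operatorname{int}(S)$ is connected, select a path $\gamma$ from $p$ to infinity inside $\mathbb{C}\setminus\operatorname{int}(S)$. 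Since $\gamma$ never enters $\operatorname{int}(S)$ it cannot enter any interior tile, so the finite sequence of tiles it traverses supplies the required chain, and moats along this chain are joined through their shared edges into a single connected component of $\mathbb{C}\setminus K^\epsilon$ opening to the unbounded exterior.

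The main obstacle is formalizing this chain-of-moats argument and handling degenerate cases. Corners turn out to be harmless: if $\gamma$ happens to pass through a corner $c$ shared by four tiles, then $c \in \mathbb{C}\setminus\operatorname{int}(S)$ forces all four surrounding tiles to be non-interior, and their moats share an open neighborhood of $c$ inside $\mathbb{C}\setminus K^\epsilon$. The only remaining things to verify are that moats of two edge-adjacent non-interior tiles are indeed joined across their shared edge—immediate because both shrunken squares are inset by $\delta_1$ from that edge, leaving a $2\delta_1$-wide strip of complement spanning it—and that $\gamma$ may be taken to visit only finitely many tiles, which is automatic since $\gamma$ eventually escapes the bounded region containing $\overline{S}$.
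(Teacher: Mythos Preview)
There is a genuine gap in the connectedness argument. You write ``using that $\mathbb{C}\setminus\operatorname{int}(S)$ is connected, select a path $\gamma$ from $p$ to infinity inside $\mathbb{C}\setminus\operatorname{int}(S)$,'' but connected closed sets need not be path-connected. Concretely, let $T$ be the closed topologist's sine curve $\{(x,\sin(1/x)):0<x\le1\}\cup(\{0\}\times[-1,1])$, attach a segment $L$ from $(1,\sin 1)$ to the circle $|z|=10$, and set $S=\mathbb{D}(0,10)\setminus(T\cup L)$. Then $S$ is bounded and open with $\mathbb{C}\setminus\operatorname{int}(S)$ connected, yet no path in that set joins the origin (which lies on $\partial S$) to infinity, so for the boundary tile containing $0$ your curve $\gamma$ simply does not exist. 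The fix is to argue at the tile level instead: if the non-interior tiles had more than one chain-component (adjacency meaning a shared edge or corner), then the unions of tiles in distinct components would be disjoint closed sets covering the connected set $\mathbb{C}\setminus\operatorname{int}(S)$, a contradiction; hence all non-interior tiles lie in a single chain and your moat argument then goes through. The paper sidesteps this issue by a different decomposition: it takes $E^\epsilon=\mathbb{C}\setminus U_\epsilon(\mathbb{C}\setminus\operatorname{int}(S))$ rather than the union of interior tiles, shows directly that each of $E^\epsilon$ and the disjoint union of shrunk boundary squares has connected complement, and then invokes Janiszewski's theorem.

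There is also a smaller omission in the boundary estimate: you control only the pieces $\partial Q'$, but $\partial K^\epsilon$ also contains edges of interior tiles that border non-interior ones. Such a neighbouring non-interior tile cannot be exterior (it shares a point with an interior tile, hence with $\overline{S}$), so it is a boundary tile and meets $\partial S$; thus those edges are also within $\delta\sqrt{2}$ of $\partial S$, but this should be stated.
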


The proof of Proposition \ref{TotApproxProp} uses the following classical result from plane topology, known as Janiszewski's Theorem.

\begin{lemma}
\label{LemmaJan}
Let $A$ and $B$ be closed sets in $\RiemannSphere$ such that $A \cap B$ is connected. If $A$ and $B$ both have connected complement, then $A \cup B$ also has connected complement.
\end{lemma}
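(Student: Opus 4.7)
The plan is to invoke Eilenberg's theorem, which identifies compact sets $K \subset \RiemannSphere$ with connected complement as precisely those on which every continuous $\ComplexPlane \setminus \{0\}$-valued function admits a continuous logarithm, and then to patch local logarithms from $A$ and $B$ together using the connectedness of $A \cap B$. This converts a topological statement about complements into a lifting problem, which is the natural language for a pasting argument.

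First I would dispose of the trivial case $A \cup B = \RiemannSphere$ (complement empty, vacuously connected) and conjugate by a Möbius transformation to arrange $\infty \notin A \cup B$, so that $A$, $B$, $A \cup B$, $A \cap B$ are compact subsets of $\ComplexPlane$. Given a continuous $f : A \cup B \to \ComplexPlane \setminus \{0\}$, applying Eilenberg's theorem to $A$ and $B$ separately produces continuous $g_A : A \to \ComplexPlane$ and $g_B : B \to \ComplexPlane$ with $e^{g_A} = f|_A$ and $e^{g_B} = f|_B$. The key observation is that on $A \cap B$ the difference $g_A - g_B$ is continuous and takes values in $2\pi i \mathbb{Z}$, hence is constant by connectedness of $A \cap B$, say equal to $2\pi i n$. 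Setting $g := g_A$ on $A$ and $g := g_B + 2\pi i n$ on $B$ then defines, via the pasting lemma for the closed cover $\{A, B\}$ of $A \cup B$, a continuous logarithm of $f$ on $A \cup B$. Applying Eilenberg's theorem once more, now in the reverse direction to $A \cup B$, yields the desired conclusion that $\RiemannSphere \setminus (A \cup B)$ is connected.

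The only real obstacle is the invocation of Eilenberg's theorem itself, which I would simply cite from a standard source (e.g. Rudin's \emph{Real and Complex Analysis}, or equivalently the Alexander-duality identification $\check{H}^1(K; \mathbb{Z}) \cong \tilde{H}_0(\RiemannSphere \setminus K)$ combined with the exponential exact sequence on $K$). Everything else is elementary, using only that a continuous integer-valued function on a connected set is constant and the pasting lemma for continuous maps on closed covers. The degenerate case $A \cap B = \emptyset$ requires no modification: the matching constant $n$ becomes inconsequential and the two logarithms may be pasted without adjustment.
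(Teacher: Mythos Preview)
Your argument is correct. The reduction to compact subsets of $\ComplexPlane$ is clean, the Eilenberg characterization is exactly the right tool for turning the complement-connectedness hypothesis into a lifting statement, and the patching step via the constant $2\pi i\mathbb{Z}$-valued difference on the connected set $A\cap B$ is airtight. The converse direction of Eilenberg's theorem (needed at the end) is not always stated alongside the forward direction in textbooks, but it is easy: if $\RiemannSphere\setminus K$ were disconnected, pick $p$ in a bounded component and observe that $z\mapsto z-p$ has nonzero winding number around $p$ along a suitable cycle in $K$, hence no continuous logarithm. Your Alexander-duality remark covers this as well.

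The paper, by contrast, gives no argument at all: it simply cites Pommerenke's \emph{Univalent Functions}, Theorem~1.9, where Janiszewski's theorem is proved by purely plane-topological means (separation arguments with Jordan curves and crosscuts). So your route is genuinely different. What you gain is a self-contained, short, and conceptually transparent proof that highlights why the hypothesis on $A\cap B$ is exactly what is needed; what the paper gains is brevity, since Janiszewski's theorem is classical enough that a bare citation is standard practice. Either is perfectly acceptable here, as the lemma is only used once (in Proposition~\ref{TotApproxProp}) and plays no further structural role.
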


\begin{proof}
See \cite[Theorem 1.9]{POM}.
\end{proof}

We can now proceed with the proof of Proposition \ref{TotApproxProp}. For a set $A$ and $\epsilon>0$, we denote by $U_\epsilon(A)$ the open $\epsilon$-neighborhood of the set $A$.

\begin{proof}

For $\epsilon>0$, let $G^\epsilon$ be a grid of squares in $\mathbb{C}$ of sidelength $\epsilon/2$. Let $H^\epsilon$ be the union of the closed squares in $G^\epsilon$ which intersect $\partial S$. Note that since $S$ is bounded, the set $H^\epsilon$ consists of finitely many closed squares and is therefore compact. Now, let $F^\epsilon$ be defined by taking each square $Q$ in $H^\epsilon$ and replacing it by a smaller square $\tilde{Q}$ with the same center of sidelength $\epsilon/4$. Then it is easy to see that the complement of $F^\epsilon$ is connected.

Now, define a collection of compact sets $\{K^\epsilon\}$ by
$$K^\epsilon:= E^\epsilon \cup F^\epsilon,$$
where $E^\epsilon:= \mathbb{C} \setminus U_\epsilon(\mathbb{C} \setminus \operatorname{int}(S))$. Note that the sets $E^\epsilon$ and $F^\epsilon$ are disjoint, since $F^\epsilon$ is contained in the $(\epsilon/\sqrt{2})$-neighborhood of $\partial S$. See Figure 2.

\begin{figure}[h!t!b]
\label{fig1}
\begin{center}

\includegraphics[width=.6\linewidth]{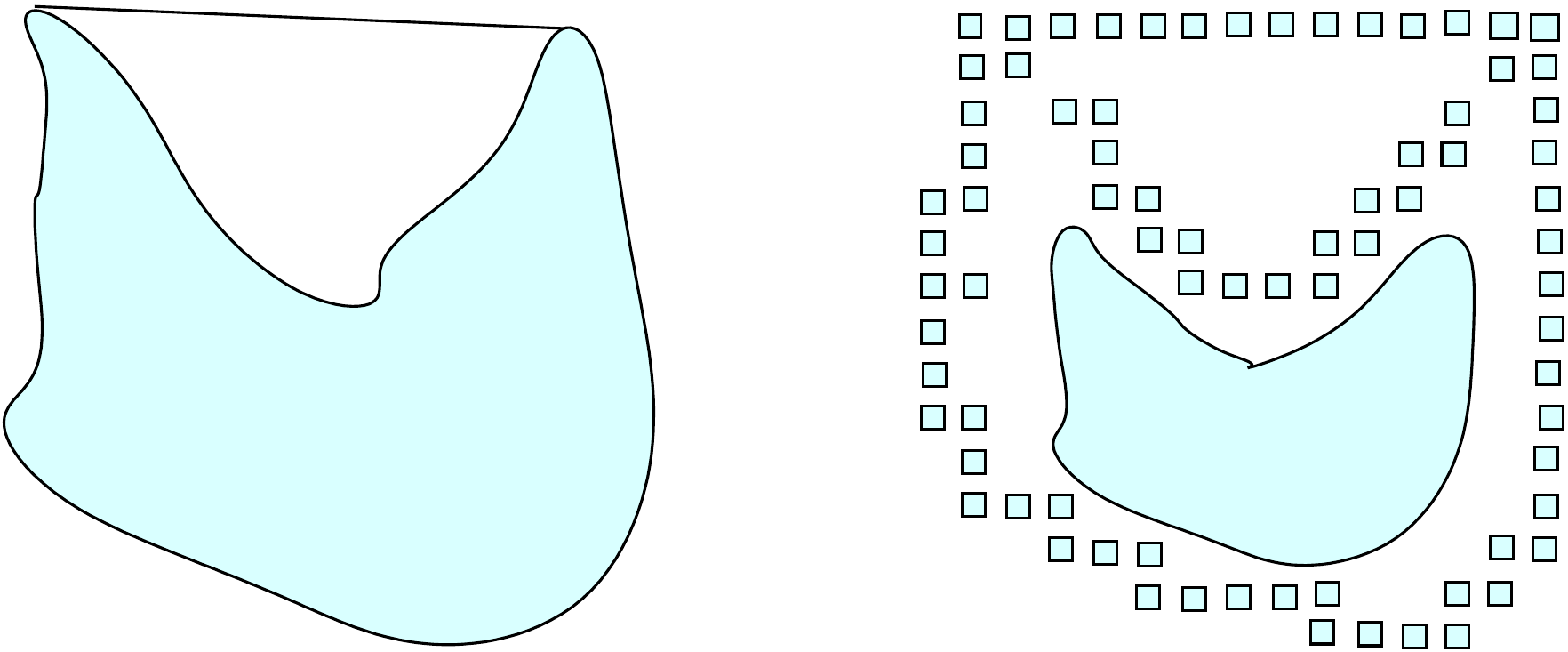}
\label{f:fig1}
\caption{On the left, a nonempty bounded subset $S$ of the plane whose interior has connected complement. On the right, the corresponding approximating set $K^\epsilon$. }
\end{center}

\end{figure}

We claim that for every $\epsilon>0$, the complement of $K^\epsilon$ is connected. Indeed, first note that the complement of $E^\epsilon$ is connected. To see this, note that
$$\mathbb{C} \setminus E^\epsilon = \bigcup_{z \notin \operatorname{int}(S)} \mathbb{D}(z,\epsilon),$$
hence each component of $\mathbb{C} \setminus E^\epsilon$ contains a disk $\mathbb{D}(z,\epsilon)$ for some $z \notin \operatorname{int}(S)$ and therefore must intersect $\mathbb{C} \setminus \operatorname{int}(S)$. Since $\mathbb{C} \setminus \operatorname{int}(S)$ is connected and $\mathbb{C} \setminus E^\epsilon \supset \mathbb{C} \setminus \operatorname{int}(S)$, it follows that each component of $\mathbb{C} \setminus E^\epsilon$ has to contain $\mathbb{C} \setminus \operatorname{int}(S)$. Thus there can only be one component; in other words, the set $\mathbb{C} \setminus E^\epsilon$ is connected. Finally, since $F^\epsilon$ also has connected complement and $E^\epsilon, F^\epsilon$ are disjoint, we can apply Lemma \ref{LemmaJan} to deduce that $K^\epsilon$ also has connected complement.

Next, we claim that $S$ is totally approximable by the sets $K^\epsilon$. It suffices to prove that  $d(\partial S, \partial K^\epsilon) \to 0$ and $d(S,K^\epsilon) \to 0$ as $\epsilon \to 0$.

First, assume for a contradiction that $d(\partial S, \partial K^{\epsilon_n})> \delta$ for all $n$, for some $\delta>0$ and some sequence $\epsilon_n$ strictly decreasing to $0$. Then for each $n$, either $\partial S \nsubseteq U_\delta(\partial K^{\epsilon_n})$ or $\partial K^{\epsilon_n} \nsubseteq U_\delta(\partial S)$. However, note that if $z \in \partial S$, then $z$ belongs to a square $Q$ in $H^\epsilon$, so that the distance between $z$ and the boundary of the corresponding square $\tilde{Q}$ in $F^\epsilon$ is less than $\epsilon/(4\sqrt{2})$, hence $\partial S \subset U_\epsilon(\partial K^\epsilon)$, for all $\epsilon>0$. We can therefore assume that the second case, $\partial K^{\epsilon_n} \nsubseteq U_\delta(\partial S)$, holds for all $n$. Now, recall that $F^\epsilon$ is contained in the $(\epsilon/\sqrt{2})$-neighborhood of $\partial S$. It follows that for all sufficiently large $n$, we have $\partial E^{\epsilon_n} \nsubseteq U_\delta(\partial S)$. Then for each such $n$, there is a point $z_n \in \partial E^{\epsilon_n} \subset \operatorname{int}(S)$ such that the disk $\mathbb{D}(z_n,\delta)$ does not intersect the boundary of $S$, hence must be contained in $\operatorname{int}(S)$. Passing to a subsequence if necessary, we can assume that $z_n \to z_0 \in \overline{S}$. It is easy to see that $z_0$ necessarily belongs to $\partial S$, since the sets $E^{\epsilon_n}$ form a compact exhaustion of $\operatorname{int}(S)$ with $E^{\epsilon_n} \subset \operatorname{int}(E^{\epsilon_{n+1}})$ for all $n$. This contradicts the fact that for sufficiently large $n$, the disk $\mathbb{D}(z_0,\delta/2)$ is contained in the disk $\mathbb{D}(z_n,\delta)$, a subset of $\operatorname{int}(S)$. Therefore $d(\partial S, \partial K^\epsilon) \to 0$ as $\epsilon \to 0$.

It remains to prove that $d(S,K^\epsilon) \to 0$ as $\epsilon \to 0$. Again, assume for a contradiction that $d(S,K^{\epsilon_n})>\delta$ for all $n$, for some $\delta>0$ and some sequence $\epsilon_n$ strictly decreasing to $0$. Since $E^\epsilon \subset \operatorname{int}(S)$ and $F^\epsilon$ is contained in the $(\epsilon/\sqrt{2})$-neighborhood of $\partial S$, we have $K^\epsilon \subset U_\epsilon(S)$, for all $\epsilon>0$. We can therefore assume that for each $n$, $S \nsubseteq U_\delta(K^{\epsilon_n})$. Then for each $n$, there exists $z_n \in S$ such that the disk $\mathbb{D}(z_n,\delta)$ does not intersect $K^{\epsilon_n}$. Assume without loss of generality that $z_n \to z_0 \in \overline{S}$. Then for all sufficiently large $n$, the disk $\mathbb{D}(z_0,\delta/2)$ does not intersect $K^{\epsilon_n}$. If $z_0 \in \partial S$, then for sufficiently large $n$, the disk $\mathbb{D}(z_0,\delta/2)$ contains a square in $F^{\epsilon_n}$, and we get a contradiction. If $z_0 \in \operatorname{int}(S)$, we get a contradiction with the fact that the sets $E^{\epsilon_n}$ form a compact exhaustion of $\operatorname{int}(S)$. This completes the proof of the proposition.

\end{proof}

Now, recall from Theorem \ref{mainthm1} that any nonempty compact set with connected complement is totally approximable by polynomial filled Julia sets. Combining this with Proposition \ref{TotApproxProp}, we get that any nonempty bounded set whose interior has connected complement is also totally approximable by polynomial filled Julia sets. This proves the converse implication in Theorem \ref{mainthm3}.

For the proof of the other implication, we need the following elementary lemmas.

\begin{lemma}
\label{lem1}
For any set $A \subset \mathbb{C}$, we have
$$U_\epsilon(A) = A \cup U_\epsilon(\partial A).$$
\end{lemma}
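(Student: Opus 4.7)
The plan is to establish both inclusions $A \cup U_\epsilon(\partial A) \subseteq U_\epsilon(A)$ and $U_\epsilon(A) \subseteq A \cup U_\epsilon(\partial A)$ directly from the definition $U_\epsilon(B) = \{z \in \mathbb{C} : d(z,B) < \epsilon\}$.

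For the easy direction $\supseteq$, the inclusion $A \subseteq U_\epsilon(A)$ is immediate since every point of $A$ is at distance zero from $A$. For $U_\epsilon(\partial A) \subseteq U_\epsilon(A)$ I would use $\partial A \subseteq \overline{A}$: given $z$ with $d(z,\partial A) < \epsilon$, pick $w \in \partial A$ with $|z-w| < \epsilon$; if $w \in A$ we are done, and otherwise $w \in \overline{A} \setminus A$ can be approximated by a sequence $w_n \in A$, giving $|z - w_n| < \epsilon$ for large $n$ and hence $d(z,A) < \epsilon$.

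The nontrivial direction is $\subseteq$. Let $z \in U_\epsilon(A)$; if $z \in A$ we are done, so assume $z \notin A$ and choose $w \in A$ with $|z-w| < \epsilon$. The plan is a straight-line argument: set $\gamma(t) = (1-t)z + tw$ for $t \in [0,1]$ and let
$$t^\ast := \inf\{t \in [0,1] : \gamma(t) \in A\},$$
which is well-defined because $\gamma(1) = w \in A$. The key claim is $\gamma(t^\ast) \in \partial A$. Indeed, $\gamma(t^\ast) \in \overline{A}$ because some sequence $t_n \searrow t^\ast$ satisfies $\gamma(t_n) \in A$; and $\gamma(t^\ast) \in \overline{\mathbb{C} \setminus A}$ because for $t < t^\ast$ we have $\gamma(t) \notin A$, and letting $t \nearrow t^\ast$ (or noting $\gamma(0) = z \notin A$ if $t^\ast = 0$) gives the approximation. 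Then $|z - \gamma(t^\ast)| = t^\ast |z - w| \leq |z-w| < \epsilon$, so $z \in U_\epsilon(\partial A)$.

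There is no real obstacle here; the only point that requires any care is the boundary case $t^\ast = 0$, which forces $z \in \overline{A} \setminus A \subseteq \partial A$ and therefore $d(z,\partial A) = 0 < \epsilon$ trivially. The lemma is purely a point-set-topology fact, and the segment argument above is essentially the whole proof.
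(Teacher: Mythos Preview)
Your proof is correct. For the inclusion $\supseteq$ you do essentially what the paper does, only with a bit more detail. For the nontrivial inclusion $\subseteq$, the paper takes a slightly cleaner route: rather than parametrising the segment from $z$ to $w$ and locating the first entry time into $A$, it simply observes that the open disk $\mathbb{D}(z,\epsilon)$ is connected and meets both $A$ (since $d(z,A)<\epsilon$) and $\mathbb{C}\setminus A$ (since $z\notin A$), and therefore must meet $\partial A$. Your segment argument is in effect a hands-on proof of this connectedness fact in the special case of a line, so the two approaches are close cousins; the paper's version is a one-liner, while yours is more explicit and constructive (and would still work in a general metric space along a path, whereas the paper's disk argument uses only that open balls are connected).
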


\begin{proof}
Clearly, we have $A \subset U_\epsilon(A)$. Also, if $z \in U_\epsilon(\partial A)$, then $\mathbb{D}(z,\epsilon) \cap \partial A \neq \emptyset$, so that $\mathbb{D}(z,\epsilon) \cap A \neq \emptyset$, from which we deduce that $z \in U_\epsilon(A)$. This shows that $A \cup U_\epsilon(\partial A) \subset U_\epsilon(A)$.

For the other inclusion, if $z \in U_\epsilon(A)$ but $z \notin A$, then the disk $\mathbb{D}(z,\epsilon)$ intersects both $A$ and $\mathbb{C} \setminus A$. By connectedness, it must intersect $\partial A$, so that $z \in U_\epsilon(\partial A)$.

\end{proof}

\begin{lemma}
\label{lem2}
Let $A,B$ be two nonempty proper subsets of the plane with $d(A,B)<\epsilon$ and $d(\partial A, \partial B) < \epsilon$. Then $d(\mathbb{C} \setminus A, \mathbb{C} \setminus B) < 2\epsilon$.
\end{lemma}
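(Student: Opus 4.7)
The plan is to prove the inclusion $\mathbb{C} \setminus A \subset U_{2\epsilon}(\mathbb{C} \setminus B)$; the reverse inclusion follows by swapping the roles of $A$ and $B$, and together these yield $d(\mathbb{C} \setminus A, \mathbb{C} \setminus B) < 2\epsilon$. So fix $z \in \mathbb{C} \setminus A$ and aim to produce $w \in \mathbb{C} \setminus B$ with $|z-w| < 2\epsilon$. The case $z \notin B$ is trivial (take $w = z$), so the real work is in the case $z \in B \setminus A$.

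In that case, since $d(A,B) < \epsilon$ gives $B \subset U_\epsilon(A)$ and since $z \notin A$, Lemma \ref{lem1} forces $z \in U_\epsilon(\partial A)$, producing $p \in \partial A$ with $|z - p| < \epsilon$. Then $d(\partial A, \partial B) < \epsilon$ yields $q \in \partial B$ with $|p - q| < \epsilon$. The triangle inequality gives $|z - q| < 2\epsilon$, so the slack $\delta := 2\epsilon - |z-q|$ is strictly positive. The one genuinely subtle point is that $q$ need not itself lie in $\mathbb{C} \setminus B$, since $\partial B$ may contain points of $B$; however, $\partial B \subset \overline{\mathbb{C} \setminus B}$ (using $B \neq \mathbb{C}$, which is part of the hypothesis that $B$ is a proper subset), so I can choose $w \in \mathbb{C} \setminus B$ with $|q - w| < \delta/2$, and then the triangle inequality gives $|z - w| < 2\epsilon$, as desired.

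I expect the estimate itself to be immediate once the above strategy is set up; the only thing to be careful about is to track strict versus non-strict inequalities so that the final perturbation of $q$ to a point of $\mathbb{C} \setminus B$ stays within the $2\epsilon$ ball around $z$. The hypothesis that both $A$ and $B$ are proper subsets of $\mathbb{C}$ is used exactly once, to guarantee that $\partial B$ consists of accumulation points of $\mathbb{C} \setminus B$ (and symmetrically for the reverse inclusion).
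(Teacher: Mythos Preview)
Your proof is correct and follows essentially the same route as the paper's: both show $\mathbb{C}\setminus A \subset U_{2\epsilon}(\mathbb{C}\setminus B)$ by invoking Lemma~\ref{lem1} to get near $\partial A$, then using $d(\partial A,\partial B)<\epsilon$ to get near $\partial B$, and finally passing from $\partial B$ to $\mathbb{C}\setminus B$. The only cosmetic differences are your case split ($z\in B$ versus $z\notin B$) versus the paper's ($z\in U_\epsilon(A)$ versus not), and your explicit $\delta$-perturbation where the paper simply notes that an open disk meeting $\partial B$ must meet $\mathbb{C}\setminus B$; one small remark is that the inclusion $\partial B \subset \overline{\mathbb{C}\setminus B}$ holds by the definition of boundary and does not itself require $B\neq\mathbb{C}$ (that hypothesis is only needed so that $\mathbb{C}\setminus B$ is nonempty).
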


\begin{proof}
We first show that $\mathbb{C} \setminus A \subset U_{2\epsilon}(\mathbb{C} \setminus B)$.

Let $z \in \mathbb{C} \setminus A$. If $z \notin U_\epsilon(A)$, then $z \in \mathbb{C} \setminus B$, since $B \subset U_\epsilon(A)$. On the other hand, if $z \in U_\epsilon(A)$, then by Lemma \ref{lem1}, we have that $z \in U_\epsilon(\partial A) \subset U_{2\epsilon}(\partial B)$. It follows that $\mathbb{D}(z,2\epsilon) \cap \partial B \neq \emptyset$, so that $\mathbb{D}(z,2\epsilon) \cap (\mathbb{C} \setminus B) \neq \emptyset$, from which we deduce that $z \in U_{2\epsilon}(\mathbb{C} \setminus B)$. In both cases, we get that $z \in U_{2 \epsilon}(\mathbb{C} \setminus B)$.

This proves that $\mathbb{C} \setminus A \subset U_{2\epsilon}(\mathbb{C} \setminus B)$. The same argument with $A$ and $B$ interchanged yields the inclusion $\mathbb{C} \setminus B \subset U_{2\epsilon}(\mathbb{C} \setminus A)$, hence $d(\mathbb{C} \setminus A, \mathbb{C} \setminus B) < 2\epsilon$, as required.

\end{proof}

We can now proceed with the proof of Theorem \ref{mainthm3}.

\begin{proof}[Proof of Theorem \ref{mainthm3}]

Let $S$ be a nonempty proper subset of $\mathbb{C}$.

As previously mentioned, if $S$ is bounded and $\mathbb{C} \setminus \operatorname{int}(S)$ is connected, then by Proposition \ref{TotApproxProp} and Theorem \ref{mainthm1}, the set $S$ is totally approximable by polynomial filled Julia sets.

Conversely, assume that $S$ is totally approximable by polynomial filled Julia sets. Then for any $\epsilon>0$, there exists a polynomial $P_\epsilon$ such that $d(S,\mathcal{K}(P_\epsilon))<\epsilon$ and $d(\partial S, \mathcal{J}(P_\epsilon)) < \epsilon$. Clearly, this implies that $S$ is bounded, since for each $\epsilon>0$, the set $\mathcal{K}(P_\epsilon)$ is compact.

It remains to prove that $\mathbb{C} \setminus \operatorname{int}(S)$ is connected. Assume for a contradiction that $\mathbb{C} \setminus \operatorname{int}(S) = E \cup F$, where $E$ and $F$ are disjoint nonempty closed subsets of $\mathbb{C}$. Since $S$ is bounded, one of the sets $E,F$ must be unbounded, say $E$, while the other set, $F$, is bounded. Now, fix a point $z_0 \in F$, and let $\delta>0$ sufficiently small so that $\overline{\mathbb{D}}(z_0,2\delta) \subset \mathbb{C} \setminus E$. Since $F \cup \overline{\mathbb{D}}(z_0,2\delta)$ is a compact subset of the open set $\mathbb{C} \setminus E$, there is a bounded open set $V$ with $F \cup \overline{\mathbb{D}}(z_0,2\delta) \subset V \subset \overline{V} \subset \mathbb{C} \setminus E$. In particular, the boundary of $V$ is contained in $\mathbb{C} \setminus (E \cup F) = \operatorname{int}(S)$.

Now, let $\epsilon>0$ be smaller than both $\delta$ and half the distance between $\partial V$ and $E \cup F$. Since $z_0 \in F \subset \mathbb{C} \setminus \operatorname{int}(S)$, we have that either $z_0 \in \partial S$ or $z_0 \in \mathbb{C} \setminus S$. In both cases, since $\partial S \subset U_\epsilon(\mathcal{J}(P_\epsilon))$ and $\mathbb{C} \setminus S \subset U_{2\epsilon}(\mathbb{C} \setminus \mathcal{K}(P_\epsilon))$ by Lemma \ref{lem2}, we get that the disk $\mathbb{D}(z_0,2\epsilon)$ contains a point $w_0 \in \mathbb{C} \setminus \mathcal{K}(P_\epsilon)$.

This is enough to obtain a contradiction. Indeed, since $w_0 \in \mathbb{D}(z_0,2\delta) \subset V$ and $w_0 \in \mathbb{C} \setminus \mathcal{K}(P_\epsilon)$, the connectedness of $\mathbb{C} \setminus \mathcal{K}(P_\epsilon)$ (see \cite[Lemma 9.4]{MIL}) implies that $\partial V$ must have non-empty intersection with $\mathbb{C} \setminus \mathcal{K}(P_\epsilon)$. Since $\mathbb{C} \setminus \mathcal{K}(P_\epsilon) \subset U_{2\epsilon}(\mathbb{C} \setminus S)$, again by Lemma \ref{lem2}, it follows that the distance between $\partial V$ and $\mathbb{C} \setminus \operatorname{int}(S) = E \cup F$ is less than $2 \epsilon$, a contradiction.

Therefore, $\mathbb{C} \setminus \operatorname{int}(S)$ is connected, as required.

\end{proof}

\section{Approximation of connected sets}
\label{sec7}

In this section, we give a proof of Theorem \ref{t:connected} by showing that our approximation method gives quasicircle Julia sets if the original set $E$ is connected.

\begin{theorem}
\label{theoconnected}
Let $E$ be a connected compact set with connected complement, and assume as before that $0 \in \operatorname{int}(E)$. Then for any $s>0$ and $n \in \mathbb{N}$ such that $E \subset \operatorname{int}(\mathcal{K}(P_{n,s}))$, where $P_{n,s}$ is as defined in Theorem \ref{thmExplicit}, the Julia set $\mathcal{J}(P_{n,s})$ is a Jordan curve. Moreover, the polynomial $P_{n,s}$ is hyperbolic and $\mathcal{J}(P_{n,s})$ is a quasicircle.
\end{theorem}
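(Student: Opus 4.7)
The plan is to exploit the strong inclusion $P_{n,s}(E) \subset \mathbb{D}(0,r) \subset \operatorname{int}(E)$ established in the proof of Theorem \ref{thmExplicit}, together with $P_{n,s}(0)=0$. Applying Schwarz's lemma to the map $\zeta \mapsto P_{n,s}(r\zeta)/r$, which sends $\overline{\mathbb{D}}$ strictly inside $\mathbb{D}$ and fixes $0$, yields $|P_{n,s}'(0)| < 1$, so $0$ is an attracting fixed point. Let $U_0$ denote its immediate basin. Every orbit starting in $E$ enters $\mathbb{D}(0,r) \subset U_0$ after one step, so $E$ lies in the Fatou set; being connected and containing $0$, the set $E$ lies entirely in $U_0$.

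The main work is to show that $U_0$ is simply connected and contains every critical point of $P_{n,s}$. Consider first $V := P_{n,s}^{-1}(\mathbb{D}(0,r))$. All $n+1$ zeros of $P_{n,s}$ lie in $E$ (the factor $z$ contributes $0$, and the zeros of $q_n$ lie in $\partial E \subset E$); since $E$ is connected, these zeros lie in a single component $W$ of $V$ containing $E$. Counting preimages of $0$, the proper map $P_{n,s}: W \to \mathbb{D}(0,r)$ has degree $n+1 = \deg(P_{n,s})$, forcing $W = V$. So $V$ is connected, and Riemann-Hurwitz gives $\chi(V) = (n+1) - c$, where $c \leq n$ is the number of critical points in $V$ counted with multiplicity. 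Combined with $\chi(V) \leq 1$ (valid for any connected planar domain), this forces $c = n$ and $\chi(V) = 1$; hence $V$ is simply connected and contains every critical point.

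Setting $V_k := P_{n,s}^{-k}(\mathbb{D}(0,r))$ (so $V_0 = \mathbb{D}(0,r)$ and $V_1 = V$), I would induct on $k \geq 1$: granted $V_{k-1}$ simply connected with $V_1 \subset V_k$ (which forces all $n$ critical points into $V_k$), the same degree count shows $V_k$ is connected, and Riemann-Hurwitz for $P_{n,s}: V_k \to V_{k-1}$ yields $\chi(V_k) = (n+1)\cdot 1 - n = 1$. So every $V_k$ is simply connected. Since $0$ is attracting, every point of $U_0$ eventually enters $\mathbb{D}(0,r)$, whence $U_0 = \bigcup_k V_k$ is a nested union of simply connected open sets and is therefore itself simply connected. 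Applying Riemann-Hurwitz one last time to $P_{n,s}: U_0 \to U_0$, with $\chi(U_0) = 1$ and all $n$ critical points in $U_0$, gives $\deg(P_{n,s}|_{U_0}) = n+1$, so $U_0$ is completely invariant. Consequently $\mathcal{K}(P_{n,s}) = \overline{U_0}$ and $\mathcal{J}(P_{n,s}) = \partial U_0$.

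Since all critical orbits of $P_{n,s}$ converge to the attracting fixed point $0$, the polynomial $P_{n,s}$ is hyperbolic. For the quasicircle statement, any Riemann map $\phi: \mathbb{D} \to U_0$ with $\phi(0) = 0$ conjugates $P_{n,s}|_{U_0}$ to a finite Blaschke product $B$ of degree $n+1$ fixing $0$; hyperbolicity of $P_{n,s}$ on $\mathcal{J}(P_{n,s})$ translates into uniform expansion of $B$ in a collar of $\mathbb{T}$, and the standard quasisymmetric extension argument then shows that $\phi$ extends to a quasisymmetric homeomorphism $\overline{\mathbb{D}} \to \overline{U_0}$, so that $\mathcal{J}(P_{n,s}) = \phi(\mathbb{T})$ is a quasicircle. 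The main obstacle is the Riemann-Hurwitz accounting through the nested preimages $V_k$: connectedness at each stage must be confirmed, and this rests crucially on the fact, built into the construction of $P_{n,s}$, that all of its zeros (and hence, as we deduce, all of its critical points) lie in the single component containing the connected set $E$.
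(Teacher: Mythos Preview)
Your argument is essentially correct but takes a different and more laborious route than the paper, and it rests on an assumption that is stronger than the stated hypothesis.

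\textbf{Comparison with the paper.} The paper's proof works directly with the Fatou component $A$ containing $E$: since $0 \in A$ is fixed, $P_{n,s}(A)=A$; and if some other component $A'$ mapped onto $A$ it would contain a zero of $P_{n,s}$, contradicting the fact that all zeros lie in $E\subset A$. Thus $A$ is completely invariant, and the Jordan curve, hyperbolicity, and quasicircle conclusions are then read off from standard results in Carleson--Gamelin. Your proof instead builds $U_0$ from below via Riemann--Hurwitz on the nested pullbacks $V_k = P_{n,s}^{-k}(\mathbb{D}(0,r))$ and unpacks the Blaschke/quasisymmetric extension argument for the quasicircle claim. The core observation (all zeros of $P_{n,s}$ lie in the connected set $E$, forcing a single full-degree component) is the same in both proofs; the paper exploits it in one line at the level of Fatou components, while you run an explicit induction on preimages of a disk.

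\textbf{A hypothesis mismatch.} The theorem is stated for \emph{any} $s,n$ with $E \subset \operatorname{int}(\mathcal{K}(P_{n,s}))$, whereas your argument uses the strict inclusion $P_{n,s}(E)\subset \mathbb{D}(0,r)$ from the \emph{proof} of Theorem~\ref{thmExplicit}, which was established only for the particular $s,n$ satisfying conditions (i)--(iii) there. That inclusion is what gives you both the Schwarz-lemma attractor and the base case $E\subset V_1$ for the Riemann--Hurwitz induction. Under the bare hypothesis $E\subset\operatorname{int}(\mathcal{K}(P_{n,s}))$ you do not know a priori that $0$ is attracting or that $P_{n,s}(E)\subset\mathbb{D}(0,r)$; the paper's argument avoids this by never needing a specific invariant disk---it uses only that $0$ is a fixed point lying in the Fatou component $A$ and that the zeros of $P_{n,s}$ lie in $E\subset A$. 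Your proof therefore establishes the result for the parameters actually used in the approximation scheme (which suffices for Theorem~\ref{t:connected}), but not the theorem in the generality stated. To match the stated hypothesis, you could replace your $\mathbb{D}(0,r)$-based induction by the paper's one-step complete-invariance argument at the Fatou-component level; the rest of your reasoning (critical points in $A$, hyperbolicity, quasicircle) then goes through unchanged.
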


\begin{proof}[Proof of Theorem \ref{t:connected}]
First, to prove that $\mathcal{J}(P_{n,s})$ is a Jordan curve, by \cite[Theorem VI.5.3]{CAR}  it suffices to show  that the Fatou set $\RiemannSphere \setminus \mathcal{J}(P_{n,s})$ consists of two completely invariant components. Since $E$ is connected and is contained in the Fatou set, it must be contained in a single Fatou component, say $A$. For polynomials, the basin of attraction of $\infty$ is completely invariant, so it suffices to show that $A$ is also completely invariant.

Since $A$ contains the fixed point $0$, we have $P_{n,s}(A) \subset A$.  If $P_{n,s}^{-1}(A)$ is not contained in $A$, then there exists a Fatou component distinct from $A$, say $A^{\prime}$, which is mapped onto $A$ by $P_{n,s}$.
This is impossible, since all the zeros of $P_{n,s}$ belong to $E \subset A$. Therefore, $A$ is completely invariant and the Julia set is a Jordan curve.

Now, since $A$ is completely invariant, all the critical points of $P_{n,s}$ belong to $A$, by \cite[Theorem V.1.3]{CAR}. It then follows from \cite[Theorem V.2.2]{CAR})
that $P_{n,s}$ is hyperbolic.  Finally, by \cite[Theorem VI.2.1]{CAR}, the Julia set $\mathcal{J}(P_{n,s})$ is not only a Jordan curve, but also a quasicircle.

\end{proof}

\section{Numerical examples}
\label{sec8}
In this section, we illustrate the method of Section \ref{sec3} with some numerical examples.

As shown in Theorem \ref{thmExplicit2}, the computation of Julia sets approximating a given compact set $E$ requires a numerical method for the computation of Fekete points for $E$. Unfortunately, the numerical computation of high-degree Fekete points is a hard large-scale optimization problem. Indeed, as far as we know, even for simple compact sets such as triangles, Fekete points have been computed only up to relatively small degrees, see e.g. \cite{TAY}, \cite{PAS} and the references therein. In order to circumvent this issue, alternative numerical methods have been introduced for the computation of approximate Fekete points : see e.g. \cite{SOM} for a method based on QR factorization of Vandermonde matrices with column pivoting, and \cite{BOS2} for a discussion of the theoretical aspects of this method.

However, for slightly complicated sets, we were not able to use the algorithm of \cite{SOM} to compute Fekete points of high degree, due to rank-deficiency and ill-conditioning issues. For this reason, we decided to replace Fekete points by other equidistributed points, Leja points, which are much more convenient from a computational point of view. This sequence of points, introduced by Leja \cite{LEJ}, is defined inductively as follows.

Choose an arbitrary point $z_1 \in \partial E$ and for $n \geq 2$, choose $z_n \in \partial E$ by maximizing the product
$$\prod_{j=1}^{n-1}|z-z_j|$$
for $z \in E$, i.e.
$$\prod_{j=1}^{n-1}|z_n-z_j| = \max_{z \in E} \prod_{j=1}^{n-1}|z-z_j|.$$

It is well-known that Leja points are easier to compute numerically than Fekete points, mainly because they are defined inductively by a univariate optimization problem. Furthermore, this optimization problem can be made discrete by replacing the compact set $E$ by a sufficiently refined discretization.

We shall prove below that our approximation theorem \ref{thmExplicit2} remains valid if Fekete points are replaced by Leja points, with small modifications. First, we need some lemmas.

As previously mentioned, Leja points are also equidistributed with respect to harmonic measure on $\partial E$, in the sense of Proposition \ref{weakconvergence}. In fact, Pritsker's Theorem \ref{thmPRI} also holds for Leja points, with the same constant.

\begin{lemma}
If $(z_n)$ is a sequence of Leja points for $E$, then
\begin{equation*}
\label{eqprit}
\left| \frac{1}{n} \sum_{j=1}^n \log{|z-z_j|} - p_\nu(z) \right| \leq C(E) \frac{\log n}{\sqrt{n}} \qquad \left(z \in \Omega, \, g_\Omega(z,\infty)>\frac{1}{n}\right)
\end{equation*}
where $C(E)$ is Pristker's constant as in Definition \ref{defPRI}.
\end{lemma}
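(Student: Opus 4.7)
The plan is to adapt the proof of Pritsker's Theorem \ref{thmPRI} from Fekete to Leja points. Setting $q_n(z) := \prod_{j=1}^n(z-z_j)$ and applying Frostman's theorem (Theorem \ref{TheoFrostman}(iii)) to rewrite $p_\nu = g_\Omega(\cdot,\infty) + \log\operatorname{cap}(E)$ on $\Omega$, the target inequality becomes
\[
\left|\frac{1}{n}\log|q_n(z)| - g_\Omega(z,\infty) - \log\operatorname{cap}(E)\right| \leq C(E)\frac{\log n}{\sqrt n} \qquad \left(g_\Omega(z,\infty) > \tfrac{1}{n}\right).
\]
The upper bound will come from Bernstein's Lemma together with an estimate on $\|q_n\|_E^{1/n}$; the lower bound will come from a subharmonic function argument anchored by that same estimate.

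For the upper bound, Bernstein's Lemma (Theorem \ref{thmBernstein}) applied to the monic polynomial $q_n$ gives $\frac{1}{n}\log|q_n(z)| \leq \frac{1}{n}\log\|q_n\|_E + g_\Omega(z,\infty)$ on $\Omega$. Thus the upper bound reduces to the Leja analog of Theorem \ref{thmFek},
\[
\|q_n\|_E^{1/n} \leq \operatorname{cap}(E) \exp\!\left(c\,\frac{\log n}{\sqrt n}\right).
\]
This is the only place where the proof diverges from the Fekete case: from the Leja extremality $\|q_n\|_E = \prod_{j=1}^n|z_{n+1}-z_j|$ and the definition of the Fekete diameter applied to the $(n+1)$-tuple $z_1,\dots,z_{n+1}\in E$, one obtains the comparison $\|q_n\|_E\cdot\prod_{j<k\leq n}|z_j - z_k| \leq \delta_{n+1}(E)^{n(n+1)/2}$. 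Iterating this inequality backward through the Leja construction and invoking Theorem \ref{thmPRI2} to bound $\delta_m(E)/\operatorname{cap}(E)$ then yields the claimed rate under the uniform perfectness hypothesis.

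For the lower bound, set $u(z) := \frac{1}{n}\log|q_n(z)| - \frac{1}{n}\log\|q_n\|_E - g_\Omega(z,\infty)$, which extends to a subharmonic function on $\Omega$ with $u(\infty) = \log(\operatorname{cap}(E)/\|q_n\|_E^{1/n})$ and $u \leq 0$ by the maximum principle. The previous step gives $u(\infty) \geq -c\log n/\sqrt n$. The remaining task is to propagate this bound uniformly from $\infty$ to the set $\Omega_n := \{g_\Omega(\cdot,\infty) > 1/n\}$, which is done by a quantitative Harnack-type comparison made available by uniform perfectness. The main obstacle is precisely this comparison, since $u$ is only subharmonic and descends to $-\infty$ at the zeros of $q_n$: the argument must patch together local Harnack estimates while avoiding these zeros, and the $\log n/\sqrt n$ rate in the Harnack distance bound is exactly the technical heart of Pritsker's paper. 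Since this comparison argument uses only the subharmonicity of $u$, the global bound $u \leq 0$, and the estimate $u(\infty) = O(\log n/\sqrt n)$ — all of which hold for the Leja polynomial $q_n$ — Pritsker's original argument carries over without modification, yielding the same constant $C(E)$ from Definition \ref{defPRI}.
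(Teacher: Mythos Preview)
The paper does not actually prove this lemma; it simply cites \cite[Corollary~2.3]{PRI}, where Pritsker establishes the estimate for a general class of near-optimal point configurations (including both Fekete and Leja points) via discrepancy and energy methods. Your attempt to give an independent reduction from the Leja case to the Fekete case has a genuine gap in the upper-bound step. Carrying out your ``iterating backward'' exactly: writing $V_n=\prod_{j<k\le n}|z_j-z_k|$, the Leja recursion gives $V_n=\prod_{m=1}^{n-1}\|q_m\|_E$, and combining your inequality $\|q_n\|_E\cdot V_n\le\delta_{n+1}(E)^{n(n+1)/2}$ with the lower bound $\|q_m\|_E\ge\operatorname{cap}(E)^m$ from Theorem~\ref{thmFek} yields only
\[
\frac{1}{n}\log\|q_n\|_E-\log\operatorname{cap}(E)\;\le\;\frac{n+1}{2}\,\log\frac{\delta_{n+1}(E)}{\operatorname{cap}(E)}\;\le\;\frac{C_2}{2}\,\sqrt{n+1}\,\log(n+1),
\]
which is $O(\sqrt{n}\log n)$, not $O(\log n/\sqrt{n})$ --- off by a full factor of $n$. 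No elementary rearrangement of the telescoping recovers this factor: the Fekete-diameter comparison bounds the full Vandermonde product, and isolating a single factor $\|q_n\|_E$ inevitably costs $n$. Obtaining the correct rate for $\|q_n\|_E^{1/n}$ in the Leja case genuinely requires Pritsker's discrepancy machinery, which is why the paper defers to \cite{PRI} rather than arguing directly.

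Two smaller points. First, on $\Omega$ your function $u$ is in fact harmonic, not merely subharmonic, since the Leja points (the zeros of $q_n$) lie in $\partial E\subset E$; so the difficulty you flag about ``descending to $-\infty$ at the zeros'' does not arise. Second, even if your outline worked, the constants it produces --- $C_2$ from Theorem~\ref{thmPRI2} composed with Harnack factors --- would have no reason to coincide with the Fekete constant $C(E)$ of Definition~\ref{defPRI}. That the \emph{same} constant works is a feature of Pritsker's unified treatment of both point families under a common energy hypothesis, not of any reduction of one to the other.
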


\begin{proof}
See \cite[Corollary 2.3]{PRI}.

\end{proof}

We will also need an analogue of Theorem \ref{thmFekSze} for Leja points.

\begin{lemma}[Leja]
\label{thmLeja}
For $n \in \mathbb{N}$, define
$$a_n:= \prod_{j=1}^{n}|z_{n+1}-z_j|$$
Then
$a_n^{1/n} \geq \operatorname{cap}(E)$ for each $n \in \mathbb{N}$, and $a_n^{1/n} \to \operatorname{cap}(E)$ as $n \to \infty$.
\end{lemma}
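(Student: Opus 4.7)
The plan is to handle the two assertions separately. The inequality $a_n^{1/n} \geq \operatorname{cap}(E)$ is immediate: by the defining recursion, $z_{n+1}$ maximizes $\prod_{j=1}^n |z - z_j|$ over $z \in E$, so the monic polynomial $q_n(z) := \prod_{j=1}^n (z - z_j)$ satisfies $\|q_n\|_E = a_n$. Applying the first half of Theorem \ref{thmFek} to $q_n$ yields $a_n^{1/n} = \|q_n\|_E^{1/n} \geq \operatorname{cap}(E)$.

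For the convergence $a_n^{1/n} \to \operatorname{cap}(E)$, I would follow the same route used to prove Proposition \ref{weakconvergence}: first show that the counting measures $\mu_n := \frac{1}{n} \sum_{j=1}^n \delta_{z_j}$ of the Leja sequence converge $\operatorname{weak}^*$ to the equilibrium measure $\nu$, and then invoke Lemma \ref{lemmaTom}(ii), which uses the standing assumption that $E$ is uniformly perfect. The key bookkeeping identity, obtained by telescoping the Leja construction, is
$$L_n := \prod_{1 \leq j < k \leq n} |z_j - z_k| \;=\; \prod_{k=1}^{n-1} a_k.$$
Feeding in the lower bound $a_k \geq \operatorname{cap}(E)^k$ just established gives $L_n \geq \operatorname{cap}(E)^{n(n-1)/2}$, while the Fekete $n$-diameter gives $L_n \leq \delta_n(E)^{n(n-1)/2}$ by definition. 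Theorem \ref{thmFekSze} then pinches these together to yield
$$\frac{2}{n(n-1)}\log L_n \;\longrightarrow\; \log \operatorname{cap}(E) = I(\nu).$$

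With this ingredient, the $\operatorname{weak}^*$ convergence is a verbatim repeat of the energy argument in Proposition \ref{weakconvergence}: for any $\operatorname{weak}^*$ subsequential limit $\mu$ of $(\mu_n)$, truncating the logarithm, applying monotone convergence, and using that $\mu_n \times \mu_n \to \mu \times \mu$ in $\operatorname{weak}^*$ on $\partial E \times \partial E$ produces
$$I(\mu) \;\geq\; \lim_{n \to \infty} \frac{2}{n^2}\log L_n \;=\; \log \operatorname{cap}(E) = I(\nu),$$
and uniqueness of the equilibrium measure forces $\mu = \nu$. Lemma \ref{lemmaTom}(ii) then delivers $\|q_n\|_E^{1/n} \to \operatorname{cap}(E)$, which is exactly the desired limit since $a_n = \|q_n\|_E$.

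The one subtlety I anticipate is the telescoping step: one must verify the identity $L_n = \prod_k a_k$ and check that the Leja lower bound $a_k \geq \operatorname{cap}(E)^k$, together with Fekete--Szeg\H{o}, really does force $L_n^{2/n(n-1)} \to \operatorname{cap}(E)$ from both sides. Once the $\operatorname{weak}^*$ convergence of $\mu_n$ is in hand, the passage to sup-norm convergence of $\|q_n\|_E^{1/n}$ is already encoded in Lemma \ref{lemmaTom}(ii) and requires no additional work.
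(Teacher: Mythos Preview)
The paper itself does not prove this lemma: it simply cites Leja's original 1957 note. Your argument is therefore not a comparison target but a genuine reconstruction, and it is correct as written. The lower bound via $a_n=\|q_n\|_E$ and Theorem~\ref{thmFek} is exactly right, and the telescoping identity $L_n=\prod_{k=1}^{n-1}a_k$ together with the sandwich $\operatorname{cap}(E)^{n(n-1)/2}\le L_n\le \delta_n(E)^{n(n-1)/2}$ does feed cleanly into the energy argument of Proposition~\ref{weakconvergence} to give $\mu_n\to\nu$ in $\operatorname{weak}^*$; Lemma~\ref{lemmaTom}(ii) then finishes.

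One point worth flagging: your route genuinely uses the uniform perfectness of $E$, because Lemma~\ref{lemmaTom}(ii) relies on the continuity of $p_\nu$ and the equality $p_\nu=I(\nu)$ on all of $E$. That hypothesis is in force throughout this part of the paper, so nothing is lost for the application, but Leja's original argument delivers the conclusion for arbitrary compact $E$ of positive capacity. In other words, you have given a valid proof tailored to the paper's standing assumptions, while the cited source proves a slightly stronger statement by a more self-contained method.
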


\begin{proof}
See \cite[Lemme 1]{LEJ}.

\end{proof}

We can now prove the following variant of Theorem \ref{thmExplicit2} for Leja points. As before, we assume that $E \subset \mathbb{C}$ is a uniformly perfect compact set with connected complement $\Omega$, and that $0 \in E$ is an interior point. Recall then that $r(E)$ and $R(E)$ are defined by
$$r(E):= \operatorname{dist}(0,\partial E)$$
and
$$R(E):= \sup_{w \in E} |w|.$$

\begin{theorem}[Leja points version]
\label{thmExplicitLeja}
Let $s>0$, and suppose that $n$ is sufficiently large so that the following conditions hold :

\begin{enumerate}[\rm(i)]
\item $\displaystyle \frac{1}{n} \leq  s$
\item $\displaystyle C(E) \frac{\log n}{\sqrt{n}} \leq \frac{s}{4}$
\item $\displaystyle e^{ns/4} \geq \frac{R(E)e^s}{r(E)}$
\item $\displaystyle \left(\frac{R(E)e^s}{r(E)}\right)^{1/n} \frac{a_n^{1/n}}{\operatorname{cap}(E)} \leq e^{s/2}$.
\end{enumerate}
Then the polynomial
$$\tilde{P}_n(z)=\tilde{P}_{n,s}(z):=z\frac{e^{-ns/2}}{\operatorname{cap}(E)^n} \prod_{j=1}^n (z-z_j)$$
satisfies
$$E \subset \operatorname{int}(\mathcal{K}(\tilde{P}_n)) \subset E_s,$$
where $E_s:= E \cup \{z \in \Omega : g_\Omega(z,\infty) \leq s\}$.

\end{theorem}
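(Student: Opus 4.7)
The plan is to adapt the proof of Theorem \ref{thmExplicit2} essentially verbatim, identifying the two places where the Fekete hypothesis was used and replacing each by a Leja analogue that is already available. The overall architecture is unchanged: show that on $\Omega_s := \{z \in \Omega : g_\Omega(z,\infty) > s\}$ the polynomial $\tilde{P}_n$ strictly expands in modulus (so iterates escape and $\mathcal{K}(\tilde{P}_n) \subset E_s$), and show that on $E$ it maps into a small disk about $0$ contained in $E$ (so $E \subset \operatorname{int}(\mathcal{K}(\tilde P_n))$). As before, Corollary \ref{CorBern} gives $\RiemannSphere \setminus \overline{\mathbb{D}}(0,R(E)e^s) \subset \Omega_s$, and $\overline{\mathbb{D}}(0,r(E)) \subset E$ together with $\Omega_s \subset \ComplexPlane \setminus E$ yields $|z| > r(E)$ on $\Omega_s$.

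The first Leja substitution concerns the lower bound on $|\tilde Q_n|$, where $\tilde Q_n(z):=\tilde P_n(z)/z$, on $\Omega_s$. Condition (i) ensures $g_\Omega(z,\infty) > 1/n$ whenever $z \in \Omega_s$, so the Leja-points version of Pritsker's theorem (the lemma immediately preceding Lemma \ref{thmLeja}) applies and, combined with (ii), gives
$$\left|\tfrac{1}{n}\sum_{j=1}^n \log|z-z_j| - p_\nu(z)\right| \leq \tfrac{s}{4}.$$
Converting $p_\nu$ to $g_\Omega(\cdot,\infty) + \log \operatorname{cap}(E)$ via Theorem \ref{TheoFrostman}(iii) and rearranging as in the original argument yields $|\tilde Q_n(z)| \geq e^{n(g_\Omega(z,\infty)-3s/4)} > e^{ns/4}$, which by (iii) is at least $R(E)e^s/r(E)$.

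The second Leja substitution concerns the upper bound on $|\tilde Q_n|$ on $E$. This is the one place where a genuinely Leja-specific input is needed beyond the already-quoted Pritsker bound, and it is the cleanest possible substitution: by the very definition of $z_{n+1}$ as a maximizer of $\prod_{j=1}^n |w - z_j|$ over $w \in E$, the monic polynomial $q_n(z) := \prod_{j=1}^n(z-z_j)$ satisfies $\|q_n\|_E = |q_n(z_{n+1})| = a_n$. This replaces the Fekete inequality $\|q\|_E \leq \delta_n(E)$ from Theorem \ref{thmFek} (and is even an equality). Condition (iv) then gives, for $z \in E$,
$$|\tilde Q_n(z)| \leq e^{-ns/2}\frac{a_n}{\operatorname{cap}(E)^n} \leq \frac{r(E)}{R(E)e^s}.$$

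Assembling these bounds exactly as in the proof of Theorem \ref{thmExplicit2} completes the argument: for $z \in \Omega_s$, $|\tilde P_n(z)| = |z||\tilde Q_n(z)| > r(E)\cdot R(E)e^s/r(E) = R(E)e^s$, so $\tilde P_n(\Omega_s)\subset \Omega_s$, while $|\tilde P_n(z)| > |z|\cdot R(E)e^s/r(E)$ with $R(E)e^s/r(E) > 1$ forces the iterates to escape to infinity, giving $\mathcal{K}(\tilde P_n)\subset E_s$; for $z \in E$, $|\tilde P_n(z)| \leq R(E)\cdot r(E)/(R(E)e^s) < r(E)$, so $\tilde P_n(E) \subset \mathbb{D}(0,r(E))\subset E$ and thus $E\subset \operatorname{int}(\mathcal{K}(\tilde P_n))$. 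There is really no main obstacle: the result follows because the two Leja-specific inputs needed — the analogue of Pritsker's bound (already stated) and the identity $\|q_n\|_E = a_n$ (immediate from the defining maximization) — slot directly into the slots created by the Fekete inputs in the original proof.
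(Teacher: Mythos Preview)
Your proposal is correct and follows exactly the paper's approach: the paper's own proof simply says the argument is identical to that of Theorem \ref{thmExplicit2} except that the Fekete bound $\|q\|_E \leq \delta_n(E)^n$ in inequality (\ref{eqq}) is replaced by $\prod_{j=1}^n |z-z_j| \leq a_n$ for $z\in E$, which is immediate from the definition of Leja points. You have spelled out both substitutions (the Leja version of Pritsker's bound and the $\|q_n\|_E = a_n$ identity) and the assembly in slightly more detail than the paper, but the content is the same.
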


\begin{remark}
The only difference between the assumptions of Theorem \ref{thmExplicitLeja} and those of Theorem \ref{thmExplicit2} is condition (iv), where $\delta_n(E)$ is replaced by $a_n^{1/n}$.
\end{remark}

\begin{proof}
The proof is exactly the same as in Theorem \ref{thmExplicit2} except that in inequality (\ref{eqq}), the use of Theorem \ref{thmFek} is replaced by the inequality
$$\prod_{j=1}^n |z-z_j|\leq \prod_{j=1}^n |z_{n+1}-z_j|=a_n \qquad (z \in E),$$
which follows directly from the definition of Leja points.

\end{proof}

We mention that using Leja points instead of Fekete points does not yield a better rate of approximation in Theorem \ref{mainthm2}. The main improvement, however, is that the polynomials $\tilde{P}_n$ can be computed numerically in a reasonable amount of time, even for high values of $n$, thereby giving an efficient algorithm to approximate a given shape by a polynomial Julia set.

More precisely, let $E$ be a uniformly perfect compact set with connected complement, and assume that $0 \in \operatorname{int}(E)$. Take $s>0$ small and $n \in \mathbb{N}$ large. The algorithm consists of the following steps.

\begin{itemize}
\item Compute $n$ Leja points for $E$ using a sufficiently refined discretization of the set.
\item Compute the polynomial
$$\tilde{P}_n(z)=z\frac{e^{-ns/2}}{\operatorname{cap}(E)^n} \prod_{j=1}^n (z-z_j),$$
where $\operatorname{cap}(E)$ is approximated by the quantity $a_n^{1/n}$, in view of Lemma \ref{thmLeja}.
\item Plot the filled Julia set of the polynomial $\tilde{P}_n$.
\end{itemize}

The first two steps are easily carried out with \textsc{matlab}, for instance, even for large values of $n$. As for plotting the filled Julia set, it can be done in the obvious natural way, that is, dividing a square containing the set $E$ into a sufficiently large number of small squares (the pixels), iterating the center of each pixel a fixed number of times, and then coloring the pixel depending on whether the absolute value of the last iterate is big or not. This works relatively well in general, at least when $n$ is not too large, say $n \leq 2000$. We also tried a more precise method for plotting Julia sets based on distance estimation (see e.g. \cite{DRA}), but the difference in quality of the images obtained was negligible compared to the longer time required for the computations.

We also mention that another difficulty in the implementation of the algorithm is to find how small $s$ can be taken, given $n$. By Theorem \ref{mainthm2}, we know that the ratio between the best possible $s$ and $\log{n}/\sqrt{n}$ is bounded. However, we observed that using $s=\log{n}/\sqrt{n}$ generally gives poor results compared to smaller values, say $s=1/n$.

We now present several numerical examples to illustrate the method. We thank Siyuan Li, Xiao Li and Ryan Pachauri, three undergraduate students at the University of Washington who produced some of the examples as part of a research project supervised by the second author for the Washington Experimental Mathematics Lab (WXML).

\newpage

\begin{example}
{\em The rabbit Julia set.}

Our first example illustrates Theorem \ref{theoconnected}, which says that the Julia sets of the approximating polynomials are Jordan curves if the original set $E$ is connected.

Figure 3 is a representation of the filled Julia set of the polynomial
$$\tilde{P}_n(z)=z\frac{e^{-ns/2}}{\operatorname{cap}(E)^n} \prod_{j=1}^n (z-z_j),$$
with $n=700$ and $s=1/700$, as well as the original set $E$ (boundary in black). The Julia set was plotted using a resolution of $5000 \times 5000$, and the computations took approximately $120$ seconds.

\begin{figure}[h!t!b]
\begin{center}

\includegraphics[width=.7\textwidth]{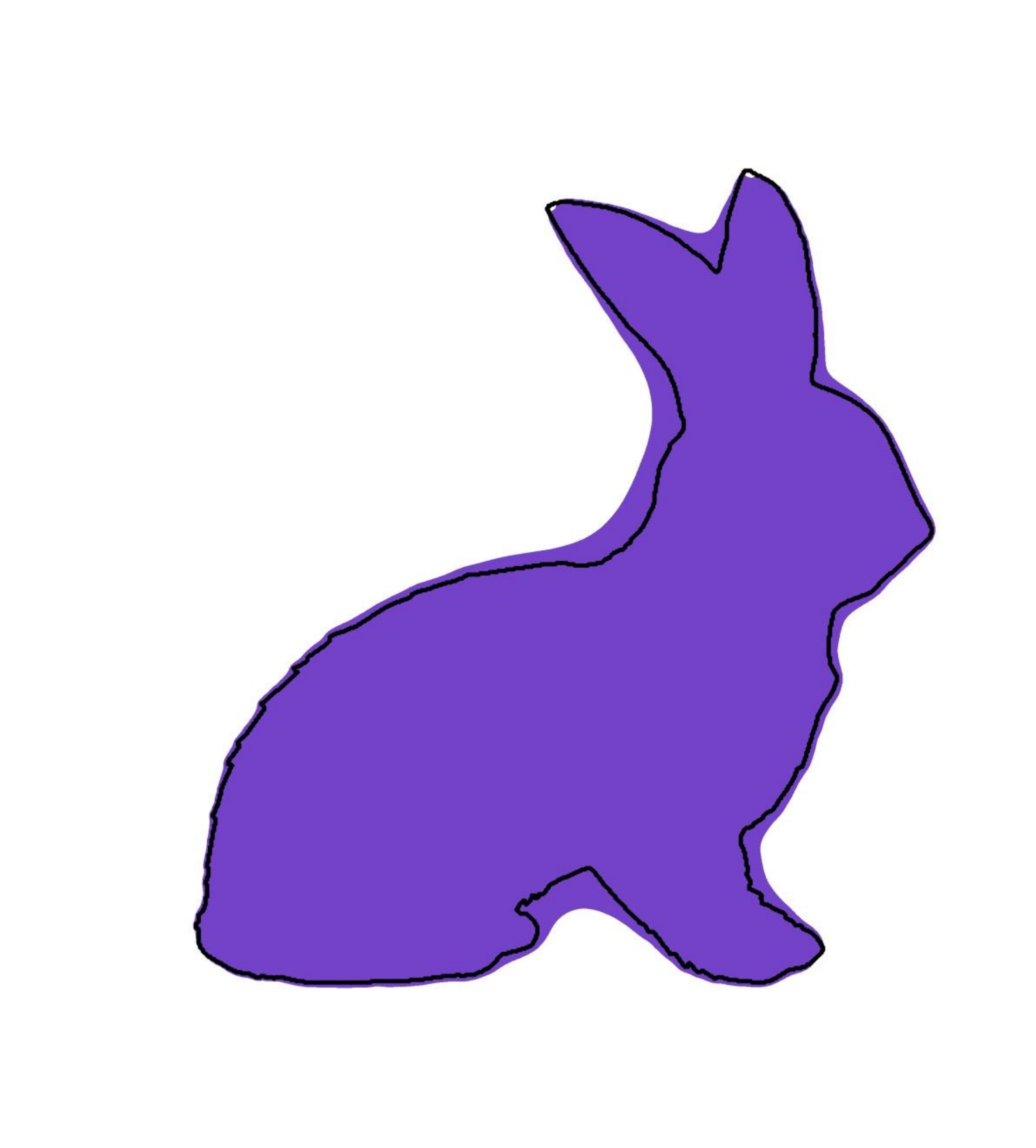}
\caption{A filled Julia set in the shape of a rabbit.}
\end{center}

\end{figure}

\end{example}

\newpage

\begin{example}
{\em The Batman Julia set.}

Here is another example of a Jordan curve Julia set approximating a connected shape.

Figure 4 is a representation of the filled Julia set of the polynomial $\tilde{P}_n$
with $n=700$ and $s=1/700$, as well as the original set $E$ (boundary in black). The Julia set was plotted using a resolution of $6000 \times 6000$, and the computations took approximately $187$ seconds.

\begin{figure}[h!t!b]
\begin{center}

\includegraphics[width=.8\textwidth]{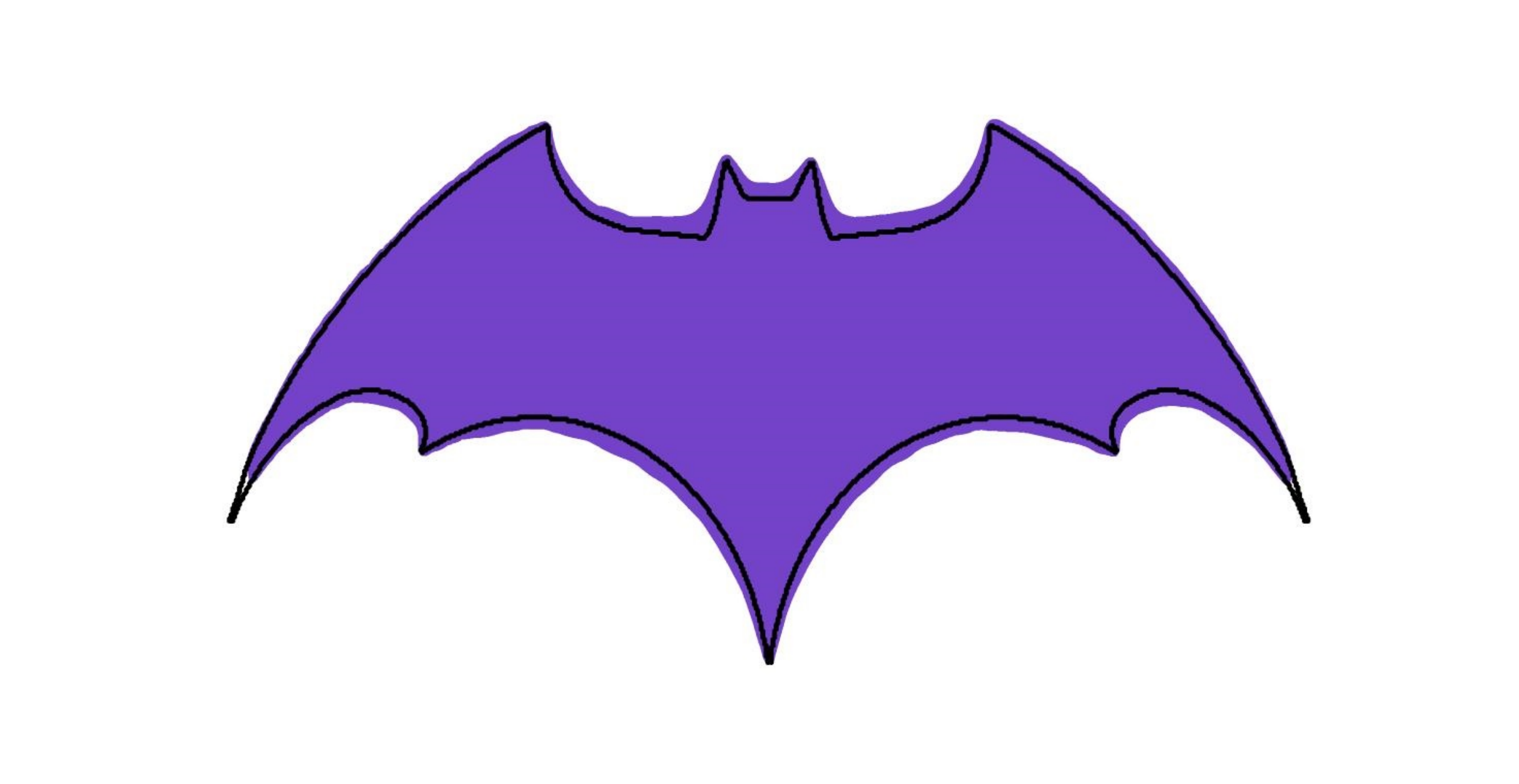}
\caption{A filled Julia set in the shape of Batman.}
\end{center}

\end{figure}

\end{example}

\begin{example}
{\em The KLMY Julia set.}

The following is an example of a disconnected Julia set representing the initials KLMY.

Figure 5 is a representation of the filled Julia set of the polynomial $\tilde{P}_n$
with $n=2000$ and $s=1/2000$. The Julia set was plotted using a resolution of $5000 \times 5000$, and the computations took approximately $340$ seconds.

\begin{figure}[h!t!b]
\begin{center}

\includegraphics[width=.8\textwidth]{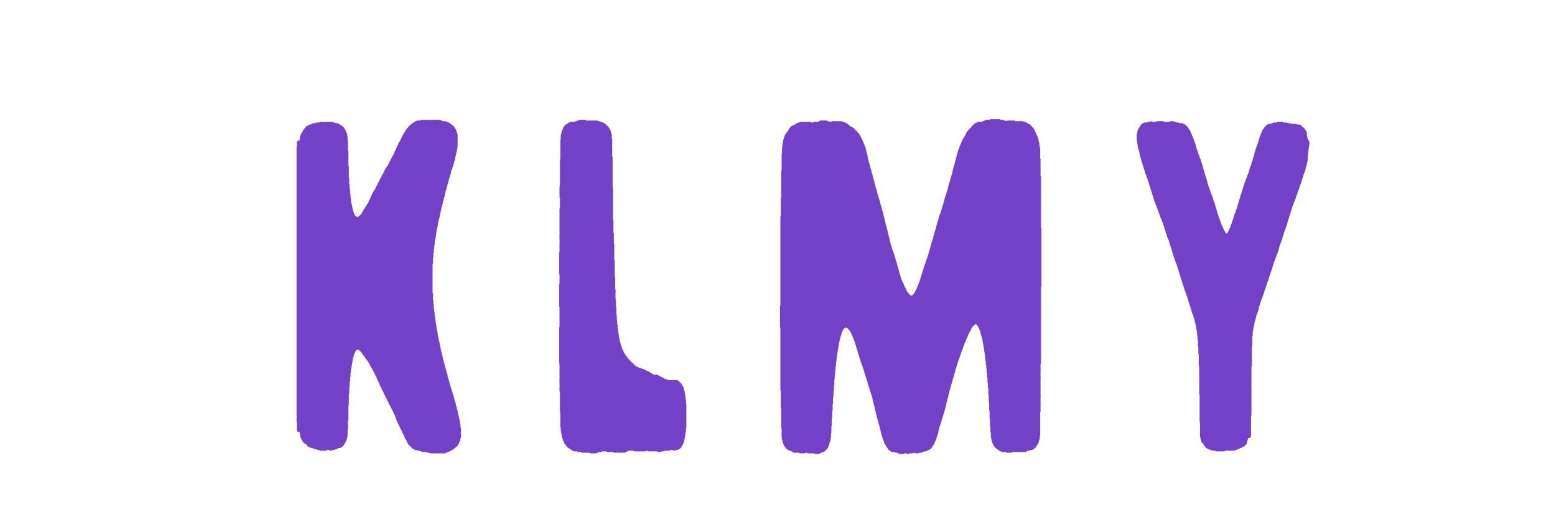}
\caption{A filled Julia set in the shape of the initials KLMY.}
\end{center}

\end{figure}

\end{example}

We note that the filled Julia set has infinitely many connected components, although only four of them are visible. This is most likely due to a lack of precision of the algorithm in \textsc{matlab}. In order to be able to see the smaller components, one can instead use the software \textsc{ultrafractal} to plot the Julia set. This is what we did in the following examples.

\begin{example}
{\em The fish-heart-diamond Julia set.}

The following is an example of a disconnected Julia set representing the shapes of a fish, a heart and a diamond.

Figure 6 is a representation of the filled Julia set of the polynomial $\tilde{P}_n$
with $n=550$ and $s=1/550$, obtained with \textsc{ultrafractal}.

\begin{figure}[h!t!b]
\begin{center}

\includegraphics[width=.7\textwidth]{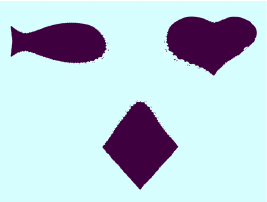}
\caption{A filled Julia set in the shape of a fish, a heart and a diamond.}
\end{center}

\end{figure}

Figure 7 is a zoomed portion of the boundary of the fish where one can see small distorted copies of the heart and the diamond.

\begin{figure}[h!t!b]
\begin{center}

\includegraphics[width=.7\textwidth]{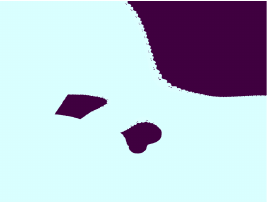}
\caption{Zoomed portion of the boundary.}
\end{center}

\end{figure}

\end{example}

\acknowledgments{The authors thank Peter Lin for helping with the proof of Theorem \ref{t:connected} and Thomas Ransford for the proof of Lemma \ref{lemmaTom}. The authors also thank Amie Wilkinson and Lasse Rempe-Gillen for helpful discussions, as well as Siyuan Li, Xiao Li and Ryan Pachauri for helping with the numerical examples.}

\bibliographystyle{amsplain}

\end{document}